\documentclass[11pt]{article}
 
\usepackage{amsmath,epsfig,amssymb,amsbsy,verbatim,array,color,graphics,graphicx}
\usepackage{amssymb,amsmath,amsthm}
\usepackage{graphicx}
\usepackage{subdepth}
\usepackage[margin=0.5in]{geometry}
\usepackage{tikz}
\usetikzlibrary{shapes.geometric, positioning}
\numberwithin{equation}{section}
\usepackage{hyperref}
 
\newtheorem{thm}{Theorem}[section]
\newtheorem{lemma}[thm]{Lemma}

\newtheorem{cor}[thm]{Corollary}

\newtheorem{clm}[thm]{Claim}

\newtheorem{rmk}[thm]{Remark}
 
\definecolor{darkblue}{rgb}{0,0,0.7}
\definecolor{darkgreen}{rgb}{0,0.3,0}
\definecolor{darkred}{rgb}{0.7,0,0}

\newcommand\eps{\varepsilon}

 \def\ex{\textup{ex}}
 
\begin{document}
 
\title{On degree powers in the degenerate Tur\'an problem\\[2ex]}
\author{Ping Hu$^\dag$ \and Ting Lan$^\dag$ \and Henry Liu\!\!
\thanks{Corresponding author\\
\indent\hspace{0.12cm}$^\dag$School of Mathematics, Sun Yat-sen University, Guangzhou 510275, China. E-mail addresses: {\tt huping9@mail.sysu.edu.cn} (P.~Hu), {\tt lant28@mail2.sysu.edu.cn} (T.~Lan)\\
\indent\hspace{0.12cm}$^\ddag$Department of Mathematical Sciences, Beijing Normal-Hong Kong Baptist University, Zhuhai 519087, China. E-mail address: {\tt henrychliu@bnbu.edu.cn}\\
}\,\:$^\ddag$
\\[2ex]
}
\date{9 August 2026}
\maketitle
\begin{abstract}
Given a graph $G$ with degree sequence $d_{1},\ldots,d_{n}$ and a positive real number $p$, let $e_{p}(G)=\sum_{i=1}^{n} d_{i}^{p}$. For a fixed family of graphs $\mathcal F$, let $\ex_{p}(n, \mathcal F)$ denote the maximum value of $e_{p}(G)$ over all $\mathcal F$-free graphs $G$ on $n$ vertices. In 2000, Caro and Yuster introduced the following Tur\'an-type problem: For a positive integer $p$ and a fixed graph $F$, determine $\ex_{p}(n, F)$, and characterize the extremal graphs $G$ on $n$ vertices that attain $\ex_p(n, F)$. Recently, Gao, Liu, Ma and Pikhurko proved that $\textup{ex}_{p}(n, \mathcal F)=(\tau(\mathcal F)-1+o(1))n^p$ for real $p>\frac{1}{1-\alpha}$, where $\mathcal F$ is a degenerate family of graphs with classical Tur\'an number $\textup{ex}(n, \mathcal F)=O(n^{1+\alpha})$ for some $\alpha\in[0,1)$, and $\tau(\mathcal F)$ is the minimum size of an independent vertex cover over all bipartite graphs $F\in\mathcal F$. Based on their method, we obtain a stability result for $\textup{ex}_{p}(n, \mathcal F)$, and prove that all extremal graphs must contain the complete bipartite graph $K_{\tau(\mathcal F)-1,n-\tau(\mathcal F)+1}$ when $n$ is sufficiently large. Our results can be used to deduce all previously known results about $\textup{ex}_{p}(n, F)$ when $F$ is a bipartite graph and $n$ is sufficiently large. We also obtain several new exact results for $\textup{ex}_{p}(n, F)$, namely, when $F$ is an even cycle, a complete bipartite graph, a discrete hypercube, a caterpillar forest, and a spider forest.\\
 
\noindent\textbf{AMS Subject Classification (2020):} 05C35\\
 
\noindent\textbf{Keywords:} Tur\'an-type problem, degree powers, $\mathcal F$-free graph, graph stability
\end{abstract}

\section{Introduction}
 
In this paper, all graphs are finite, simple, and undirected. Let $G$ and $H$ be two graphs. Let $v\in V(G)$ be a vertex, and $A,B\subseteq V(G)$ be disjoint subsets. The \emph{neighbourhood} of $v$ is $N_{G}(v):=\{ u\in V(G):uv\in E(G)\}$, and the \emph{degree} of $v$ is $d_{G}(v):=|N_{G}(v)|$. The \emph{maximum degree} of $G$ is denoted by $\Delta(G)$. Let $d_{G}(v,A):=|N_{G}(v)\cap A|$. For a real number $p>0$, we write $d_{G}^p(v):=(d_{G}(v))^p$ and $d_{G}^p(v,A):=(d_{G}(v,A))^p$ for convenience. Let $G[A]$ denote the subgraph of $G$ induced by $A$. We use $kG$ to denote $k$ vertex-disjoint copies of $G$, and $G\vee H$ to denote the \emph{join} of $G$ and $H$, i.e., the graph obtained by adding all edges between vertex-disjoint copies of $G$ and $H$. A \emph{vertex cover} of $G$ is a subset $S$ of $V(G)$ such that every edge in $G$ contains at least one element of $S$. The minimum size of a vertex cover of $G$ is the \emph{vertex cover number} of $G$, denoted by $\beta(G)$. If $G$ is a bipartite graph, an \emph{independent vertex cover} of $G$ is an independent set that is also a vertex cover of $G$. The minimum size of an independent vertex cover of $G$ is the \emph{independent cover number} of $G$, denoted by $\tau(G)$. Let $I(G)$ denote the family of all minimum independent vertex covers of $G$. Let $P_{t}, C_{t}, K_{t}, E_{t}$ and $M_t$ denote the path, the cycle, the complete graph, the empty graph, and a maximum matching on $t$ vertices, respectively. Let $K_{s,t}$ denote the complete bipartite graph with classes of size $s$ and $t$. In particular, we denote the star $K_{1,t}$ by $S_t$. A \emph{broom} graph is obtained by adding pendant edges to one end-vertex of a path.

Let $\mathcal F$ be a family of graphs. If $\mathcal F$ consists of a single graph $F$, we write $F$ instead of $\{F\}$ for simplicity. We say that a graph $G$ is \emph{$\mathcal F$-free} if $G$ does not contain $F$ as a subgraph for every $F\in\mathcal F$. The \emph{Tur\'an number} $\ex(n, \mathcal F)$ of $\mathcal F$ is the maximum number of edges in an $\mathcal F$-free graph on $n$ vertices. Tur\'an’s theorem \cite{T1941} states that $\ex(n, K_{r+1})=e(T_{r}(n))$ for $n\ge r\ge 2$, where $T_{r}(n)$ is the \emph{Tur\'an graph}, i.e., the complete balanced $r$-partite graph on $n$ vertices where every class has size $\lceil\frac{n}{r}\rceil$ or $\lfloor\frac{n}{r}\rfloor$. Moreover, $T_r(n)$ is the unique extremal graph attaining $\ex(n, K_{r+1})$. For a graph $G$ with degree sequence $d_{1},\ldots,d_{n}$ and a real number $p>0$, let $e_{p}(G):=\sum_{i=1}^{n} d_{i}^{p}$. In 2000, Caro and Yuster \cite{CY2000} introduced the following Turán-type problem: For a fixed graph $F$ and a positive integer $p$, determine the function $\ex_{p}(n, F)$, which is the maximum of $e_{p}(G)$ taken over all $F$-free graphs $G$ on $n$ vertices. Moreover, characterize the \textit{extremal graphs} for $\ex_p(n,F)$, i.e., the $F$-free graphs $G$ on $n$ vertices with $e_{p}(G)=\ex_{p}(n, F)$. We may naturally extend $\ex_{p}(n, F)$ by replacing $F$ with a family of graphs $\mathcal F$, and consider the function $\ex_{p}(n, \mathcal F)$. Clearly, we have $\ex_{1}(n, \mathcal F)=2\,\ex(n, \mathcal F)$. In \cite{CY2000}, Caro and Yuster proved that $\ex_{p}(n, K_{r+1})=e_{p}(T_{r}(n))$ for $p=1, 2, 3$. When $F$ is a non-bipartite graph, Pikhurko \cite{P2004} showed that in order to compute $\ex_{p}(n, F)$ asymptotically in $n$, it is enough to consider complete $(\chi(F)-1)$-partite graphs only, where $\chi(F)$ is the \emph{chromatic number} of $F$. 

A family of graphs $\mathcal F$ is \emph{degenerate} if at least one graph of $\mathcal F$ is bipartite. The aim of this paper is to study the function $\ex_p(n,\mathcal F)$ when $\mathcal F$ is a degenerate family of graphs. Numerous results about $\ex_p(n,F)$ have previously been proved for specific bipartite graphs $F$, including even cycles \cite{CCZ2023,G2025b,N2009}, complete bipartite graphs \cite{CY2000,G2025b}, star forests \cite{CY2000,LLQS2019}, linear forests \cite{CY2000,LLQS2019}, and broom graphs \cite{G2025a,LLQS2019,WY2022}. Recently, Gerbner \cite{G2025b} studied the connection between the function $\ex_p(n,F)$ and counting stars $S_p$ in $F$-free graphs. If $\mathcal F$ is a degenerate family of graphs, we define $\tau(\mathcal F):=\min\{\tau(F):F\in\mathcal F$ is bipartite$\}$. The K\H{o}v\'ari-S\'os-Tur\'an theorem \cite{KST1954} implies $\ex(n,\mathcal F)=O\big(n^{2-\frac{1}{\tau(\mathcal F)}}\big)$, and thus $\ex(n,\mathcal F)=O(n^{1+\alpha})$ for some $\alpha\in[0,1)$. In particular, it is well known that if some graph of $\mathcal F$ is a forest, then $\ex(n,\mathcal F)=O(n)$. We shall study $\ex_p(n,\mathcal F)$ when $p$ is a real number above some threshold depending on $\alpha$, namely, $p>\frac{1}{1-\alpha}$. Observe that the complete bipartite graph $K_{\tau(\mathcal F)-1,n-\tau(\mathcal F)+1}$ is $\mathcal F$-free, and thus $\ex_{p}(n, \mathcal F)\ge e_{p}(K_{\tau(\mathcal F)-1,n-\tau(\mathcal F)+1})=(\tau(\mathcal F)-1+o(1)) n^{p}$. Recently Gao, Liu, Ma and Pikhurko \cite{GLMP2025} obtained the following asymptotic result for $\ex_{p}(n, \mathcal F)$.

\begin{thm}\label{thm:GLMP} {\em(\cite{GLMP2025})}
Let $p>1$ be a real number. Suppose that $\mathcal F$ is a degenerate family of graphs satisfying $\ex(n, \mathcal F)=O(n^{1+\alpha })$ for some constant $\alpha\in[0,1)$. Then there exists a constant $C_{\mathcal F}>0$ such that\\[-1.2ex]
\[
  \ex_{p}(n, \mathcal F) 
  \left\{
  \begin{array}{l@{\quad}l}
   \leq C_{\mathcal F}n^{1+p\alpha}, & \textup{\emph{if $1 < p < \frac{1}{1-\alpha}$;}} \\ 
  = (\tau(\mathcal F)-1+o(1)) n^{p}, & \textup{\emph{if $p > \frac{1}{1-\alpha}$.}}
  \end{array}
  \right.
\]
\end{thm}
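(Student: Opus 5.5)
We prove the two regimes separately, using throughout the bound $e(G)\le C n^{1+\alpha}$, valid for every $\mathcal F$-free graph $G$ on $n$ vertices and hence, by heredity, for every subgraph of such a $G$. Fix a bipartite $F\in\mathcal F$ with $\tau(F)=\tau:=\tau(\mathcal F)$, and let $h=|V(F)|$.

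For $1<p<\frac1{1-\alpha}$, we use a dyadic decomposition. Let $V_k=\{v: 2^k\le d(v)<2^{k+1}\}$, and set $n_k=|V_k|$.

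\emph{Large-degree vertices.} Consider the bipartite subgraph between $V_k$ and $V(G)$ and apply the Tur\'an bound to it as an $\mathcal F$-free graph on at most $2n$ vertices. This only gives $n_k2^k\le C(2n)^{1+\alpha}$, which is too weak by itself. The sharper tool is the star-counting (Erd\H{o}s-type) argument. If $n_k2^k\gg n^{1+\alpha}$, then the vertices of $V_k$ together with their neighbourhoods form a dense bipartite graph. Its density forces a $K_{\tau,h}$ whenever $n_k\gtrsim n\,(n/2^k)^{\,\tau-1}\cdot$const, by K\H{o}v\'ari--S\'os--Tur\'an counting of common neighbourhoods. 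A cleaner route is to apply the Tur\'an bound inside the graph induced by $V_k\cup N(V_k)$. I would take this as a sub-lemma, $n_k\le C' n^{1+\alpha}/2^k$, obtained by applying the hypothesis to the subgraph consisting of edges incident to $V_k$, which has at least $n_k2^{k-1}$ edges on $n$ vertices. This gives
\[e_p(G)\le\sum_k n_k 2^{(k+1)p}\le C''\sum_k\min(n,\,n^{1+\alpha}2^{-k})\,2^{kp}.\]
The crossover occurs at $2^k\approx n^{\alpha}$. Below it the terms sum to $O(n\cdot n^{\alpha p})$, and above it to $n^{1+\alpha}\sum 2^{k(p-1)}$ up to $2^k\le n$, which yields $n^{1+\alpha}n^{p-1}=n^{p+\alpha}$. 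This exceeds $n^{1+p\alpha}$, since $p+\alpha> 1+p\alpha$ is equivalent to $(p-1)(1-\alpha)>0$. So the crude count fails, and we need the improved bound $n_k\le C n^{1+\alpha}2^{-k/(1-\alpha)}$-ish.

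To get it, we exploit the $K_{\tau,h}$-freeness of $G$. Each vertex of $V_k$ sees $2^k$ vertices, so counting $\tau$-subsets of neighbourhoods gives $n_k\binom{2^k}{\tau}\le h\binom{n}{\tau}$, that is, $n_k\lesssim n^\tau 2^{-k\tau}$. For $p$ near the threshold this is still not enough, and a more careful argument is needed. Following GLMP, I would instead use the subgraph $G[V_{\ge k}]$: we have $e(G[V_{\ge k}])\le C|V_{\ge k}|^{1+\alpha}$. Combined with the estimate that most of the edges at vertices of $V_{\ge k}$ go outside this set, this iterates to the bound $|V_{\ge k}|\lesssim n\,2^{-k/\alpha}$-type, after which the geometric sum converges to $O(n^{1+p\alpha})$ when $p(1-\alpha)<1$. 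This decay estimate for $|V_{\ge k}|$ is the step I expect to be the main obstacle.

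For $p>\frac1{1-\alpha}$, the lower bound comes from $K_{\tau-1,n-\tau+1}$. For the upper bound, fix $\eps>0$ and let $L$ be the set of vertices with $d(v)\ge\eps n$.

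\emph{Contribution of $V\setminus L$.} These vertices contribute at most $(\eps n)^{p-1}\cdot 2e(G)=O(\eps^{p-1}n^{p+\alpha})$. We need $o(n^p)$ from them, so we refine by splitting at $n^{\alpha+\delta}$. The part with degrees in $[n^{\beta},\eps n]$ is handled by the dyadic bound of the first part applied with the exponent $p$, which yields $O(\eps^{p-1/(1-\alpha)}n^p)$-type errors; the part with degrees below $n^{\beta}$ contributes at most $n\cdot n^{\beta p}=o(n^p)$.

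\emph{Contribution of $L$.} Each vertex of $L$ has $\ge\eps n$ neighbours. If $|L|\ge \tau$, consider the auxiliary bipartite graph between $L$ and $V$. If some $\tau$ vertices of $L$ had $\ge h$ common neighbours, we would obtain $K_{\tau,h}\supseteq F$ with $F$'s cover side in $L$. Hence every $\tau$-set of $L$ has fewer than $h$ common neighbours. By inclusion--exclusion and counting, $|L|\le C(\eps)$, and moreover every vertex outside a set of $O_\eps(1)$ vertices is adjacent to at most $\tau-1$ vertices of $L$. Therefore $\sum_{v\in L}d(v)\le(\tau-1)n+O_\eps(n^{\alpha}\cdot)\ldots$, precisely $\sum_{v\in L}d(v)\le(\tau-1+o(1))n$. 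Since $|L|=O(1)$ and each $d(v)\le n$, convexity gives $\sum_{v\in L}d(v)^p\le(\tau-1+o(1))n^p$. Letting $\eps\to0$ completes the proof.
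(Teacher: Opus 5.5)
\textbf{There is a genuine gap, at exactly the step you flag, and it affects both regimes.}

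\emph{First regime.} Everything hinges on a decay estimate for $|V_{\ge k}|$, which you do not prove. The form you guess, $|V_{\ge k}|\lesssim n2^{-k/\alpha}$, is false. It would give $e_p(G)=O(n)$ for every $p<1/\alpha$. At $2^k\approx n$ it allows fewer than one vertex of degree about $n$, yet $K_{1,n-1}$ is $\mathcal F$-free whenever $\tau(\mathcal F)\ge 2$. Likewise, $C_4$-free polarity graphs have $\Theta(n)$ vertices of degree about $\sqrt n$.

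The mechanism you sketch cannot produce any usable bound either. The inequality $e(G[V_{\ge k}])\le C|V_{\ge k}|^{1+\alpha}$ controls only edges inside $V_{\ge k}$. The edges leaving $V_{\ge k}$ land in a set of size up to $n$, and the only bound on them is the global $Cn^{1+\alpha}$, which you already found too weak. (Your K\H{o}v\'ari--S\'os--Tur\'an count $n_k\lesssim n^\tau 2^{-k\tau}$ is correct but, as you note, insufficient.)

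The missing idea is to shrink the other side by random sampling, as in Claim~\ref{clm:stabilityU} (Claim 4.3 of \cite{GLMP2025}).
\begin{itemize}
\item[(i)] Suppose $m$ vertices have degree at least $D$. Take them as $V_1$, and let $V_2$ be a uniformly random $m$-subset of $V(G)$.
\item[(ii)] Then $\mathbb{E}[e(G[V_1\cup V_2])]\ge m^2D/(2n)$.
\item[(iii)] But $G[V_1\cup V_2]$ is $\mathcal F$-free on at most $2m$ vertices, so $m^2D/(2n)\le C(2m)^{1+\alpha}$.
\item[(iv)] Hence $|\{v:d(v)\ge D\}|=O\big((n/D)^{1/(1-\alpha)}\big)$.
\end{itemize}
With this lemma your dyadic sum works. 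The range $2^k\le n^\alpha$, using $|V_k|\le n$, contributes $O(n^{1+p\alpha})$. The range $2^k>n^\alpha$ contributes at most a constant times $n^{1/(1-\alpha)}\sum_k 2^{k(p-\frac{1}{1-\alpha})}$. For $p<\frac{1}{1-\alpha}$ this is a decreasing geometric series, whose first term is $O(n^{1+p\alpha})$.

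\emph{Second regime.} Your handling of $L=\{v:d(v)\ge\eps n\}$ is correct and is a legitimate alternative to Claims~\ref{clm:stabilityU} and~\ref{clm:stabilityUdegree}. Counting $\tau$-sets gives $|L|=O(1/\eps)$. All but $O_\eps(1)$ vertices have at most $\tau-1$ neighbours in $L$. Then $d^p\le n^{p-1}d$ finishes that part.

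However, the middle range of degrees, which you hand to ``the dyadic bound of the first part,'' needs the same missing lemma. In fact the error $O(\eps^{p-1/(1-\alpha)}n^p)$ you quote is what the bound $(n/D)^{1/(1-\alpha)}$ yields: an increasing geometric series dominated by $2^k\approx \eps n$. It is not what your stated decay yields.

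Using the first regime only as a black box would not rescue the $\eps n$ threshold. Interpolating $d^p\le d^{\hat p}(\eps n)^{p-\hat p}$ with $\hat p<\frac{1}{1-\alpha}$ gives order $n^{p+1-\hat p(1-\alpha)}$, which is much larger than $n^p$. The paper, following \cite{GLMP2025}, avoids this in a different way. It cuts at $n^{1-\delta_1}$ and tunes $\hat p$ to $\delta_1$ (Claim~\ref{clm:stabilityW}). It then controls the at most $n^{\delta_2}$ vertices above the cut by sampling together with K\H{o}v\'ari--S\'os--Tur\'an.

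Once you have the sampling lemma, your $\eps n$-threshold route closes and is arguably more direct.
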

Gao et al.~actually proved the version of Theorem \ref{thm:GLMP} when $\mathcal F$ is a degenerate family of uniform hypergraphs. By using ideas from the proof of Theorem~\ref{thm:GLMP}, we obtain a stability result for $\ex_{p}(n, \mathcal F)$ involving vertex degrees, as follows.

\begin{thm}\label{thm:stability}
Let $\mathcal F$ be a degenerate family of graphs satisfying $\ex(n, \mathcal F)=O(n^{1+\alpha })$ for some constant $\alpha\in [0,1)$, and $\tau(\mathcal F)\ge 2$. Let $p>\frac{1}{1-\alpha}$. If $G$ is an $\mathcal F$-free graph on $n$ vertices such that $e_{p}(G)\ge (\tau(\mathcal F)-1-o(1)) n^{p}$, then $G$ has exactly $\tau(\mathcal F)-1$ vertices, each of degree $(1-o(1))n$. All remaining vertices of $G$ have degree $o(n)$.
\end{thm}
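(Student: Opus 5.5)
Write $t=\tau(\mathcal F)$. The plan is to split the vertex set into a small set of high-degree vertices and the rest, bound the contribution of the low-degree vertices using the edge bound $\ex(n,\mathcal F)=O(n^{1+\alpha})$, and then use $\mathcal F$-freeness to limit how many vertices can have linear degree.

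\textbf{Step 1: a degree threshold.} Fix $\eps>0$ and call $v$ \emph{big} if $d(v)\ge \eps n$; let $B$ be the set of big vertices. The degrees sum to $2e(G)\le Cn^{1+\alpha}$, so $|B|\le Cn^{\alpha}/\eps=o(n)$.

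\textbf{Step 2: small vertices contribute $o(n^p)$.} For this step the key input is the sharper vertex-level estimate from the proof of Theorem~\ref{thm:GLMP} (Gao--Liu--Ma--Pikhurko): vertices of degree $o(n)$ contribute $o(n^p)$ to $e_p(G)$ when $p>\frac1{1-\alpha}$.
\begin{itemize}
\item I would import that estimate, or reprove it by dyadic decomposition. Let $V_j=\{v: d(v)\in[2^{j},2^{j+1})\}$.
\item Take any $s$ vertices of $V_j$. Together with all their neighbours they span a subgraph on at most $s+s2^{j+1}$ vertices with at least $s2^{j-1}$ edges, and this subgraph is $\mathcal F$-free. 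Hence $s2^{j}\lesssim (s2^{j})^{1+\alpha}$ after choosing $s$ appropriately.
\item Optimising $s$ gives $|V_j|\lesssim n\cdot 2^{-j(1-\alpha)/\alpha}$, or a comparable bound; I expect this counting is exactly what GLMP do.
\item Summing $|V_j|2^{jp}$ over $2^j\le\eps n$ gives $O(\eps^{\delta}n^p)$ for some $\delta>0$, precisely because $p>\frac1{1-\alpha}$.
\end{itemize}
So, letting $\eps\to0$ slowly, vertices of degree $o(n)$ contribute $o(n^p)$, and $\sum_{v\in B}d(v)^p\ge (t-1-o(1))n^p$.

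\textbf{Step 3: at most $t-1$ vertices of linear degree.} I claim $G$ has at most $t-1$ vertices of degree $\ge\eps n$, at least when $n$ is large relative to $\eps$. Suppose $v_1,\dots,v_t$ each have degree $\ge\eps n$.
\begin{itemize}
\item Count the vertices $w$ adjacent to all of $v_1,\dots,v_t$. The case needing care is when the common neighbourhood is small.
\item The cleaner route is via the GLMP structure: choose $F\in\mathcal F$ bipartite with an independent cover of size $t$, so $F\subseteq K_{t,m}$ for a constant $m$.
\item If $v_1,\dots,v_t$ have $m$ common neighbours outside $\{v_i\}$, then $F\subseteq G$, a contradiction.
\item Otherwise the neighbourhoods are nearly disjoint pieces of sizes $d(v_i)$. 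Then $\sum d(v_i)\le (1+o(1))n$, so by convexity $\sum_{v\in B}d^p$ is too small. Concretely, the $B$-contribution is at most $\max \sum x_i^p$ over $\sum x_i\le n(t-1)+$small.
\end{itemize}
The more robust formulation of this step: every vertex outside $B$ has at most $t-1$ neighbours in any set of big vertices that is "rich", otherwise a $K_{t,m}$ appears. Double counting the edges between $B$ and $V\setminus B$ then gives
\[
\sum_{v\in B}d(v)\le (t-1)n+o(n)+|B|^2 .
\]
Combined with $\sum_{v\in B}d(v)^p\ge(t-1-o(1))n^p$ and $d(v)\le n$, this forces the following. Order $d(u_1)\ge d(u_2)\ge\cdots$. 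Since $p>1$, $\sum_B d^p\le n^{p-1}\sum_B d$, and equality up to $o(n^p)$ forces $\sum_B d(d^{p-1}-n^{p-1})=o(n^p)$. Hence the total degree mass on vertices with $d\le(1-\eta)n$ is $o(n)$, so their $p$-th powers contribute $o(n^p)$. The number of vertices with $d\ge(1-\eta)n$ is then at most $t-1$ by the sum bound, and at least $t-1$ by the $p$-power bound. Letting $\eta\to0$ gives exactly $t-1$ vertices of degree $(1-o(1))n$. All other vertices in $B$ carry $o(n)$ total degree, so every remaining vertex has degree $o(n)$.

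\textbf{Main obstacle.} The hardest step is the double-counting claim that a vertex outside $B$ has essentially at most $t-1$ big neighbours, up to $o(n)$ exceptions. It requires handling pairs where $t$-sets in $B$ have fewer than $m$ common neighbours. Since $|B|=O(n^\alpha)$, there are polynomially many $t$-subsets, each with fewer than $m$ common neighbours. So the exceptional vertices number at most $m\binom{|B|}{t}$, and this may not be $o(n)$. I would try first to shrink $B$ to vertices of degree $\ge\eps n$ with $|B|\le 2C/\eps$, which is valid since $e(G)\le Cn^{1+\alpha}$ gives $|B|\le Cn^{\alpha}/\eps$. If that is not bounded, I would instead restrict to the top $K$ vertices for a large constant $K$, using Step 2's dyadic bound to show the rest contribute $o(n^p)$. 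Then $\binom{K}{t}m=O(1)$ exceptions and the argument goes through.
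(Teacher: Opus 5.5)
\textbf{Verdict: there is a genuine gap, in Step~2.} Your overall plan is sound, and so is your final convexity squeeze. The plan is: split at degree $\eps n$; show that small-degree vertices contribute little to $e_p(G)$; show there are boundedly many big vertices whose degrees sum to at most $(t-1)n+O(1)$, by counting common neighbourhoods of $t$-sets (a $t$-set with $|V(F)|-t$ common neighbours gives $F\subseteq K_{t,|V(F)|-t}\subseteq G$); then squeeze. But everything else rests on Step~2, and Step~2 is not established.

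\begin{itemize}
\item \emph{Your counting is vacuous.} Take $s$ vertices of $V_j$ together with their neighbourhoods. They span about $N\approx s2^{j+1}$ vertices but only about $N/4$ edges, i.e.\ bounded average degree. Since $K_{1,N-1}$ is $\mathcal F$-free when $\tau(\mathcal F)\ge 2$, no choice of $s$ can contradict $\ex(N,\mathcal F)=O(N^{1+\alpha})$.
\item \emph{The bound you then assert is false.} You claim $|V_j|\lesssim n2^{-j(1-\alpha)/\alpha}$. For $\mathcal F=\{C_4\}$ ($\alpha=\frac12$), the Erd\H{o}s--R\'enyi polarity graph has about $n$ vertices of degree about $\sqrt n$, while your bound allows only $O(\sqrt n)$ at that scale. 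Even granting the bound, summing $|V_j|2^{jp}$ gives roughly $n^{p+1-(1-\alpha)/\alpha}\gg n^p$ when $\alpha>\frac12$, so $p>\frac1{1-\alpha}$ would play no role.
\item \emph{Importing the subcritical case of Theorem~\ref{thm:GLMP} at threshold $\eps n$ also fails.} The bound $\sum_{d(v)<\eps n}d^p(v)\le(\eps n)^{p-\hat p}\cdot Cn^{1+\hat p\alpha}$ has exponent $p+1-\hat p(1-\alpha)>p$.
\item \emph{Step~3 therefore has no support yet.} Your resolution of the ``main obstacle'' also leans on Step~2. In addition, ``$e(G)\le Cn^{1+\alpha}$ gives $|B|\le 2C/\eps$'' does not follow; it only gives $|B|\le 2Cn^{\alpha}/\eps$, which is far from bounded.
\item \emph{A smaller slip.} Your first version of Step~3 (``nearly disjoint neighbourhoods, so $\sum d(v_i)\le(1+o(1))n$'') is wrong for $t\ge 3$. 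Only the $(t-1)n$ version is true.
\end{itemize}

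\textbf{How to repair it.} The missing idea is the random-sampling argument of Claim~\ref{clm:stabilityU}, used at every scale instead of just one. Let $S$ be a set of $m$ vertices of degree at least $D$, and let $R$ be a uniformly random $m$-subset of $V(G)$. Then $\mathbb E\big[e(G[S\cup R])\big]\ge\frac12\sum_{v\in S}\frac{d(v)m}{n}\ge\frac{m^2D}{2n}$. On the other hand, $e(G[S\cup R])\le\ex(2m,\mathcal F)=O(m^{1+\alpha})$. Hence $|\{v:d(v)\ge D\}|=O\big((n/D)^{1/(1-\alpha)}\big)$.

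\begin{itemize}
\item Summing dyadically gives $\sum_{d(v)<\eps n}d^p(v)=O\big(\eps^{\,p-\frac1{1-\alpha}}\big)n^p$. This is exactly where $p>\frac1{1-\alpha}$ is used.
\item Taking $D=\eps n$ gives $|B|=O\big(\eps^{-\frac1{1-\alpha}}\big)$. Your common-neighbourhood count then yields $\sum_{v\in B}d(v)\le(t-1)n+O_\eps(1)$, and the rest of your argument goes through.
\end{itemize}

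\textbf{Comparison with the paper.} The paper avoids a multi-scale estimate by placing the threshold at $n^{1-\delta_1}$ instead of $\eps n$.
\begin{itemize}
\item One sampling step gives $|A|\le n^{\delta_2}$ with $\delta_2<1/\tau(\mathcal F)$.
\item The K\H{o}v\'ari--S\'os--Tur\'an bound then gives $\sum_{u\in A}d(u)\le(\tau(\mathcal F)-1+\eps)n$. By convexity, this also absorbs the intermediate degrees between $n^{1-\delta_1}$ and $\eps n$.
\item Below the threshold, the subcritical case of Theorem~\ref{thm:GLMP} with $\hat p$ just below $\frac1{1-\alpha}$ suffices.
\end{itemize}
Once repaired, your route replaces those two external inputs with the multi-scale sampling bound and an elementary count on a bounded set.
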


Theorem~\ref{thm:stability} states that any $\mathcal F$-free graph $G$ with $e_{p}(G)\ge (\tau(\mathcal F)-1-o(1)) n^{p}$ must contain a specific structure: There are exactly $\tau(\mathcal F)-1$ ``hub" vertices, each of which is adjacent to almost all of the other vertices. Thus, $G$ contains a complete bipartite subgraph $K_{\tau(\mathcal F)-1,(1-o(1))n}$. Building upon this structural information, we next determine the structure of the extremal graphs for $\ex_p(n,\mathcal F)$. Our main result says that all extremal graphs contain the complete bipartite subgraph $K_{\tau(\mathcal F)-1,n-\tau(\mathcal F)+1}$ when $n$ is sufficiently large. 

\begin{thm}\label{thm:main}
Let $\mathcal F$ be a finite, degenerate family of graphs satisfying $\ex(n, \mathcal F)=O(n^{1+\alpha})$ for some constant $\alpha \in[0,1)$, and $\tau(\mathcal F)\ge 2$. Let $p>\frac{1}{1-\alpha}$. Then there exists $n_0=n_0(\mathcal F,p)$ such that the following holds for all $n\ge n_0$. If $G$ is an $\mathcal F$-free graph on $n$ vertices with $e_{p}(G)=\ex_p(n,\mathcal F)$, then $G$ contains $K_{\tau(\mathcal F)-1,n-\tau(\mathcal F)+1}$ as a subgraph.
\end{thm}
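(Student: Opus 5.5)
Write $t=\tau(\mathcal F)$ and let $G$ be an extremal graph. Since $e_p(G)\ge e_p(K_{t-1,n-t+1})=(t-1-o(1))n^p$, Theorem~\ref{thm:stability} gives a set $A$ of exactly $t-1$ vertices, each of degree $(1-o(1))n$, while every vertex of $B:=V(G)\setminus A$ has degree $o(n)$. The goal is to show that every $v\in B$ is adjacent to all of $A$. The plan is to argue by contradiction: take $v\in B$ with a non-neighbour in $A$, modify $G$ locally near $v$ to get an $\mathcal F$-free graph $G'$ with $e_p(G')>e_p(G)$, and so contradict extremality.

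\textbf{Step 1 (structure near $A$).} Fix $F\in\mathcal F$ bipartite with $\tau(F)=t$, and let $h=|V(F)|$. Let $B_0\subseteq B$ be the set of vertices of $B$ adjacent to all of $A$; then $|B_0|=(1-o(1))n$. I will show that any vertex $w\in B$ with more than $h$ neighbours in $B_0$ forces a copy of $F$. Indeed, $A\cup\{w\}$ consists of $t$ vertices with at least $h$ common neighbours in $B_0$. So $K_{t,h}\subseteq G$, and $K_{t,h}$ contains $F$, because $F$ has an independent vertex cover of size $t$ and the other $\le h$ vertices can be embedded into the large side. Hence every $w\in B$ has fewer than $h$ neighbours in $B_0$. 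The same argument shows that $G[B]$ is sparse: each vertex of $B$ has $O(h)$ neighbours in $B_0$, and every $B$-vertex adjacent to all of $A$ has fewer than $h$ neighbours in $B_0$.

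\textbf{Step 2 (the switching).} Suppose $v\in B$ misses some $a\in A$. Form $G'$ by deleting all edges from $v$ into $B$ and joining $v$ to every vertex of $A$. I must then check two things.

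First, $G'$ is $\mathcal F$-free. In $G'$ the vertex $v$ is a twin of the vertices of $B_0$: its neighbourhood is exactly $A$, which is contained in the neighbourhood of any $u\in B_0$. Let $H$ be a copy of some $F'\in\mathcal F$ in $G'$ using $v$. Since $|B_0|$ is huge, I can choose $u\in B_0$ not in $H$ and replace $v$ by $u$, obtaining a copy of $F'$ in $G$. This is a contradiction. This argument needs no assumption on $F'$, which is why $\mathcal F$ must be finite (so that the sizes are bounded), or at worst any copy uses boundedly many vertices.

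Second, $e_p(G')>e_p(G)$. Each $a'\in A$ that was not adjacent to $v$ gains one degree, at least one such $a'$ exists, and this increases $e_p$ by at least about $p\,((1-o(1))n)^{p-1}$. The losses come from $v$ and its former $B$-neighbours. Each former $B$-neighbour $w$ drops from degree $d_w=o(n)$ to $d_w-1$, losing at most $p\,d_w^{p-1}=o(n^{p-1})$. The vertex $v$ itself changes from $d(v)=o(n)$ to $t-1$, losing at most $o(n)^p$. The difficulty is that $v$ might have many $B$-neighbours, up to $o(n)$ of them, and the total loss $\sum_w p\,d_w^{p-1}$ could then be comparable with $n^{p-1}$.

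\textbf{Step 3 (main obstacle: controlling the loss).} To handle this I would first choose $v$ cleverly, taking a vertex of $B$ missing $A$ with minimum degree, or else argue with averaging. I also need a sharper bound, $\Delta(G[B])=O(1)$ or at least $d(v)=O(n^{\alpha'})$ with $(p-1)\alpha'<p-1$. Step 1 handles $B$-neighbours inside $B_0$. The set $B\setminus B_0$ of vertices missing some $A$-vertex has size $o(n)$; I would bound its edges by $\ex(|B\setminus B_0|,\mathcal F)=O(|B\setminus B_0|^{1+\alpha})$ and use $p>\frac{1}{1-\alpha}$ to show that a typical such vertex has small degree, and that the sum $\sum_{w\in B} d_w^p$ is $o(n^{p-1})$ in an appropriate averaged sense.

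If direct control fails, I would instead perform the switching simultaneously for all vertices of $B\setminus B_0$. The gain is then about $|B\setminus B_0|\cdot p\,n^{p-1}$. The loss is bounded by $\sum_{w\in B}d_w^p$ over the affected part, and this is $o(|B\setminus B_0|\,n^{p-1})$ by Hölder and the Turán bound, again using the threshold $p>\frac1{1-\alpha}$ exactly as in Theorem~\ref{thm:GLMP}. Making this loss estimate rigorous is the crux of the proof.
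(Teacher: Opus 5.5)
\textbf{There is a genuine gap: the loss estimate in Step 3 is left unproved, and the tools you name for it do not suffice.}

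Your architecture otherwise matches the paper. The paper uses the same ingredients: stability via Theorem~\ref{thm:stability}, a bound on neighbours in $B_0$ (the paper has $d_G(v,X)\le \ell(\mathcal F)-1$; your bound $h$ works equally well), and the twin-replacement argument for $\mathcal F$-freeness. Your fallback is also the paper's: switch all of $Y:=B\setminus B_0$ at once, deleting all edges from $Y$ to $B$ and adding all missing $A$--$Y$ edges. Each $Y$-vertex misses some $A$-vertex, so the gain is at least $p|Y|((1-o(1))n)^{p-1}$.

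Neither of your proposed routes to controlling the loss works:
\begin{enumerate}
\item[(i)] A bound $\Delta(G[B])=O(1)$ is not available a priori.
\item[(ii)] The Tur\'an bound $e(G[Y])=O(|Y|^{1+\alpha})$ together with $d_{G}(v,Y)\le |Y|$ and H\"older only gives $\sum_{v\in Y}d_G^p(v,Y)=O(|Y|^{p+\alpha})$, since these constraints alone allow $\Theta(|Y|^{\alpha})$ vertices of degree $\Theta(|Y|)$. Beating the gain would then need $|Y|^{p-1+\alpha}=o(n^{p-1})$. You only know $|Y|=o(n)$, so this fails for, say, $|Y|=n/\log n$ and $\alpha>0$.
\item[(iii)] The single-vertex switch with a minimum-degree choice of $v$ has the same obstruction. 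Its $O(|Y|^{\alpha})$ neighbours in $Y$ may each have degree close to $|Y|$.
\end{enumerate}

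The missing idea is to apply Theorem~\ref{thm:GLMP} itself to the $\mathcal F$-free graph $G[Y]$. Since $p>\frac{1}{1-\alpha}$, this gives $\sum_{v\in Y}d^p_G(v,Y)\le \ex_p(|Y|,\mathcal F)\le C|Y|^p$ for a constant $C$ and all $|Y|\ge 1$. The paper instead splits into two cases: $|Y|<\frac12 n^{1-1/p}$, where the trivial bound $|Y|^{p+1}$ suffices, and the complementary case, where the asymptotic applies. From this bound the estimate closes:
\begin{enumerate}
\item[(a)] For $v\in Y$ we have $d_G(v,A\cup B_0)\le t-2+h$, so $\sum_{v\in Y}d_G^p(v)=O(|Y|^p)$.
\item[(b)] The loss on $B_0$ is also $O(|Y|^p)$. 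There are at most $h|Y|$ edges between $Y$ and $B_0$, and each $B_0$-vertex has degree at most $t-1+h+|Y|$.
\item[(c)] Since $|Y|=o(n)$, we get $O(|Y|^p)=o(|Y|n^{p-1})$, which is beaten by the gain.
\end{enumerate}
With this bound, your single-vertex switch could also be rescued by choosing $v\in Y$ via averaging, but the simultaneous switch is cleaner. Without this step, the proposal does not establish $e_p(G')>e_p(G)$, which is the heart of the theorem.
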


By Theorem \ref{thm:main}, any extremal graph $G$ for $\ex_p(n,\mathcal F)$ with $n$ sufficiently large must contain $K_{\tau(\mathcal F)-1,n-\tau(\mathcal F)+1}$ as a subgraph. Thus, $G$ admits a partition $V(G)=U\cup X$, where $U$ is a small set of $\tau(\mathcal F)-1$ ``hub'' vertices, and $X$ is a large set of $n-\tau(\mathcal F)+1$ ``reservoir'' vertices. We would like to  determine the structure of the edges of $G$ within $U$ and $X$. If $K_{\tau(\mathcal F)-1,n-\tau(\mathcal F)+1}$ is a \emph{maximal $\mathcal F$-free graph}, i.e., adding any edge to $K_{\tau(\mathcal F)-1,n-\tau(\mathcal F)+1}$ creates a copy of some $F\in\mathcal F$, then $K_{\tau(\mathcal F)-1,n-\tau(\mathcal F)+1}$ must be the unique extremal graph for $\ex_p(n,\mathcal F)$. We immediately have the following corollary.

\begin{cor}\label{cor:H}
Let $\mathcal F$ be a finite, degenerate family of graphs satisfying $\ex(n, \mathcal F)=O(n^{1+\alpha})$ for some constant $\alpha \in[0,1)$, and $\tau(\mathcal F)\ge 2$. Let $p>\frac{1}{1-\alpha}$. Then there exists $n_0=n_0(\mathcal F,p)$ such that the following holds for all $n\ge n_0$. If $G$ is an $\mathcal F$-free graph on $n$ vertices with $e_{p}(G)=\ex_p(n,\mathcal F)$, and $K_{\tau(\mathcal F)-1,n-\tau(\mathcal F)+1}$ is a maximal $\mathcal F$-free graph, then $G=K_{\tau(\mathcal F)-1,n-\tau(\mathcal F)+1}$.
\end{cor}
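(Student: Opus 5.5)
We deduce Corollary~\ref{cor:H} directly from Theorem~\ref{thm:main}; no new estimates are needed. Write $t=\tau(\mathcal F)$ and let $n_0=n_0(\mathcal F,p)$ be the constant given by Theorem~\ref{thm:main}. Fix $n\ge n_0$ and an $\mathcal F$-free graph $G$ on $n$ vertices with $e_p(G)=\ex_p(n,\mathcal F)$.

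\emph{Step 1.} By Theorem~\ref{thm:main}, $G$ contains a subgraph $H\cong K_{t-1,n-t+1}$. Since $H$ has $(t-1)+(n-t+1)=n=|V(G)|$ vertices, $H$ is a spanning subgraph of $G$. So $V(H)=V(G)$ and $E(H)\subseteq E(G)$.

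\emph{Step 2.} Suppose, for a contradiction, that $E(G)\setminus E(H)$ contains an edge $e$. Then $H+e$ is a subgraph of $G$. By hypothesis $K_{t-1,n-t+1}$ is maximal $\mathcal F$-free, so $H+e$ contains a copy of some $F\in\mathcal F$. Since $H+e\subseteq G$, this copy lies in $G$, contradicting that $G$ is $\mathcal F$-free. Hence $E(G)=E(H)$, so $G=H\cong K_{t-1,n-t+1}$.

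The only point needing care is in Step~1: the bipartite subgraph guaranteed by Theorem~\ref{thm:main} must be spanning. This holds because its two part sizes sum to exactly $n$, so no vertex of $G$ lies outside it. Beyond that, the corollary is immediate from Theorem~\ref{thm:main} and the definition of a maximal $\mathcal F$-free graph. It also follows that $K_{t-1,n-t+1}$ is the unique extremal graph whenever the maximality hypothesis holds.
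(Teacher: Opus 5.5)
Your proposal is correct and follows essentially the paper's own argument: Theorem~\ref{thm:main} gives a copy of $K_{\tau(\mathcal F)-1,n-\tau(\mathcal F)+1}$ in $G$, and maximality of that graph together with the $\mathcal F$-freeness of $G$ rules out any further edge. Your remark that the copy is spanning because its part sizes sum to $n$ makes explicit a step the paper leaves implicit when it calls the corollary immediate.
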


One\,\, particularly\,\, important\,\, case\,\, is\,\, when\,\, the\,\, induced\,\, subgraph\,\, $G[X]$\,\, is\,\, a\,\, near regular graph. A graph $L$ on $t$ vertices is \emph{near $\ell$-regular} if $L$ is an $\ell$-regular graph when $\ell t$ is even, otherwise $L$ has $t-1$ vertices of degree $\ell$ and one vertex of degree $\ell-1$.  Let $\mathcal L(n,s, \ell)$ denote the family of graphs that are the join of $K_{s}$ and a near $\ell$-regular graph on $n-s$ vertices. For $\ell\in\{0,1\}$, let $L(n,s,\ell)$ denote the only graph of $\mathcal L(n,s, \ell)$. Clearly, all graphs $G\in \mathcal L(n,s, \ell)$ have the same value of $e_p(G)$. For a bipartite graph $F$, define $\ell(F):=\min_{S\in I(F)}\{\min_{u\in S}d_F(u)\}$. For a degenerate family of graphs $\mathcal F$, define\\[-1.2ex]
\[
\ell(\mathcal F):=\min\{\ell(F): F\in\mathcal F\textup{ is bipartite and }\tau(F)=\tau(\mathcal F)\}.
\]
Using Theorem~\ref{thm:main}, we may obtain the following corollary. 
\begin{cor}\label{cor:L}
Let $\mathcal F$ be a finite, degenerate family of graphs satisfying $\ex(n, \mathcal F)=O(n^{1+\alpha})$ for some constant $\alpha \in[0,1)$, and $\tau(\mathcal F)\ge 2$. Let $p>\frac{1}{1-\alpha}$. Then there exists $n_0=n_0(\mathcal F,p)$ such that the following holds for all $n\ge n_0$. Let $G$ be an $\mathcal F$-free graph on $n$ vertices with $e_p(G) = \ex_p(n,\mathcal F)$. If there exists an $\mathcal F$-free graph in $\mathcal L(n,\tau(\mathcal F)-1, \ell(\mathcal F)-1)$, then $G\in \mathcal L(n,\tau(\mathcal F)-1,\ell(\mathcal F)-1)$.
\end{cor}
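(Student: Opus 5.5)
By Theorem~\ref{thm:main}, for $n\ge n_0$ the extremal graph $G$ contains $K_{s,n-s}$ with $s:=\tau(\mathcal F)-1$. Fix such a copy with parts $U$ ($|U|=s$) and $X$ ($|X|=n-s$), and set $\ell:=\ell(\mathcal F)$. The plan has four steps:
\begin{itemize}
\item show that $G[X]$ has maximum degree at most $\ell-1$;
\item make $U$ a clique;
\item compare $e_p(G)$ with $e_p(L)$ for an $\mathcal F$-free $L\in\mathcal L(n,s,\ell-1)$, whose existence is assumed;
\item use the strict convexity of $x\mapsto x^p$ to force $G[X]$ to be near $(\ell-1)$-regular.
\end{itemize}

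\textbf{Step 1: the degree bound in $X$.} Suppose some $x\in X$ has $d_{G[X]}(x)\ge \ell$. Choose a bipartite $F\in\mathcal F$ with $\tau(F)=s+1$, a cover $S\in I(F)$, and $u\in S$ with $d_F(u)=\ell$. Embed $u$ at $x$ and $S\setminus\{u\}$ into $U$. The neighbours of $u$ lie in $V(F)\setminus S$, which is independent, so they can go to $\ell$ neighbours of $x$ in $X$. The remaining vertices of $V(F)\setminus S$ are adjacent only to vertices of $S$. We send them to fresh vertices of $X$, which works because $|X|\gg |F|$ and every vertex of $X$ is joined to all of $U$. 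We still need the vertices of $N_F(u)$ to be adjacent to the images of their other $S$-neighbours; this holds since those images lie in $U$, which is complete to $X$. The edges of $F$ used are therefore all present, so $G\supseteq F$, a contradiction. Hence $\Delta(G[X])\le \ell-1$.

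\textbf{Step 2: $U$ can be assumed a clique.} Every graph of the form $K_s\vee H$ has $e_p$ at least that of $G$ whenever $G[X]\subseteq H$. I will not try to prove that $G[U]$ is complete directly. Instead I will compare values: adding edges inside $U$ only increases $e_p$, so it suffices to bound $e_p(G)\le e_p(K_s\vee G[X])$.

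\textbf{Step 3: the degree-sum comparison.} We have
\[
e_p(K_s\vee G[X])=s(n-1)^p+\sum_{x\in X}(s+d_{G[X]}(x))^p ,
\]
where each $d_{G[X]}(x)\le\ell-1$. This is maximised exactly when all these degrees equal $\ell-1$, or when all but one do if $(\ell-1)(n-s)$ is odd, because a graph's degree sum is even. That maximum is precisely $e_p(L)$ for $L\in\mathcal L(n,s,\ell-1)$.

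\textbf{Step 4: equality.} By extremality $e_p(G)\ge e_p(L)$, so equality holds throughout. Equality in Step~2 forces $G[U]$ to be complete, since otherwise $e_p$ strictly increases. Equality in Step~3 forces $G[X]$ to be near $(\ell-1)$-regular. Hence $G\in\mathcal L(n,s,\ell-1)$.

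\textbf{Main obstacle.} The main obstacle is the embedding in Step~1. One must verify that vertices of $N_F(u)$ have all their neighbours inside $S$, which holds because $S$ is an independent vertex cover, and that no edge of $F$ is required between two vertices placed in $X$ other than those at $x$. Both hold because $V(F)\setminus S$ is independent.
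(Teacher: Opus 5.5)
Your proposal is correct and follows essentially the same route as the paper. You bound $\Delta(G[X])\le \ell(\mathcal F)-1$ by embedding a minimizing $F$ with $S\setminus\{u\}$ in $U$ (you spell out this embedding, which the paper only asserts), bound $e_p(G)$ termwise by $e_p$ of a member of $\mathcal L(n,\tau(\mathcal F)-1,\ell(\mathcal F)-1)$ using the parity of the degree sum, and read off $d_G(u)=n-1$ on $U$ and near-regularity of $G[X]$ from equality. One small correction: the equality step uses only strict monotonicity of $x\mapsto x^p$, not strict convexity as your overview says, and your Step~3 in fact uses only monotonicity.
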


Note that if $F\in\mathcal F$ where $F$ may be non-bipartite, and the vertex cover number $\beta(F)<\tau(\mathcal F)$, then the graph $L(n,\tau(\mathcal F)-1,0)$ contains $F$. Thus, if there exists an $\mathcal F$-free graph in $\mathcal L(n,\tau(\mathcal F)-1,\ell(\mathcal F)-1)$, then we have $\beta(F)\ge \tau(\mathcal F)$. In particular, if $\tau(F)=\tau(\mathcal F)$, then $\beta(F)=\tau(F)=\tau(\mathcal F)$.

Using Theorem~\ref{thm:main} and Corollaries \ref{cor:H}, \ref{cor:L}, we may obtain all previously known exact results about $\ex_p(n,F)$ for a bipartite graph $F$ with $\tau(F)\ge 2$, and sufficiently large $n$. We may also obtain many new results. Below we highlight three cases. The following result on complete bipartite graphs $K_{s,t}$ is an improvement on a result of Gerbner \cite{G2025b}.

\begin{thm}\label{thm:Kst}
Let $2\le s\le t$ and $p>s$. Then there exists $n_0=n_0(s,t,p)$ such that the following holds for all $n\ge n_0$. Let $G$ be a $K_{s,t}$-free graph on $n$ vertices. Then $G$ is an extremal graph  with $e_{p}(G)=\ex_p(n,K_{s,t})$ if and only if $G=K_{s-1}\vee L$ for some near $(t-1)$-regular graph $L$ on $n-s+1$ vertices which does not contain any  $K_{a,b}$ such that $2\le a\le \min\{b,s\}$ and $a+b=t+1$.
\end{thm}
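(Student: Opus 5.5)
By Theorem~\ref{thm:main}, applied to $\mathcal F=\{K_{s,t}\}$, every extremal graph for large $n$ contains $K_{s-1,n-s+1}$. The threshold matches: $\tau(K_{s,t})=s$, and Kővári–Sós–Turán gives $\alpha=1-\frac1s$, so $\frac{1}{1-\alpha}=s<p$. Hence any extremal $G$ has a set $U$ of $s-1$ vertices joined to all of $X=V(G)\setminus U$.

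Adding edges never decreases $e_p$, so we may assume $U$ is a clique. This is automatic, since adding an edge inside $U$ creates no new $K_{s,t}$: every vertex of $X$ was already adjacent to all of $U$, so any copy using a $U$--$U$ edge can be rerouted, and in particular $K_{s-1}\vee G[X]$ is $K_{s,t}$-free if and only if $G[X]$ is. So $G=K_{s-1}\vee L$ with $L=G[X]$. Strictly, maximality of $e_p$ forces $U$ to be a clique, since an absent edge could be added without creating $K_{s,t}$, strictly increasing $e_p$.

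The core step is to characterize exactly when $K_{s-1}\vee L$ is $K_{s,t}$-free. A copy of $K_{s,t}$ in $K_{s-1}\vee L$ uses some vertices of $U$, say $i$ in the $s$-side and $j$ in the $t$-side. Every vertex of $U$ is adjacent to everything, so the copy exists if and only if $L$ contains $K_{s-i,t-j}$, where $i+j\le s-1$ and a part of size $0$ means a set of vertices, which is trivially present.

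The extreme choices are as follows.
\begin{itemize}
\item Putting all of $U$ on the $s$-side ($i=s-1$) requires a star $K_{1,t}$ in $L$. So $\Delta(L)\le t-1$.
\item Splitting $U$ with $i+j=s-1$ requires $K_{a,b}$ in $L$ with $a=s-i\ge 1$, $b=t-j$, and $a+b=t+1$. The case $a=1$ is the star condition. Up to swapping sides this gives $K_{a,b}$ with $a\le b$, $a+b=t+1$, $2\le a\le s$.
\item Using fewer vertices of $U$ only demands bigger bipartite graphs, which contain these, so they add no further condition.
\end{itemize}
Hence $K_{s-1}\vee L$ is $K_{s,t}$-free if and only if $\Delta(L)\le t-1$ and $L$ contains no $K_{a,b}$ with $2\le a\le\min\{b,s\}$ and $a+b=t+1$.

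Optimizing is now straightforward. Vertices in $U$ have degree $n-1$ regardless. The vertices of $X$ have degree $s-1+d_L(v)$ with $d_L(v)\le t-1$. Since $p>1$ and $x\mapsto (s-1+x)^p$ is strictly increasing, $e_p$ is maximized exactly when every $d_L(v)=t-1$, or, when $(t-1)(n-s+1)$ is odd, all but one vertex with the last of degree $t-2$ by parity. This is exactly the near $(t-1)$-regular condition.

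The maximum is attained provided some near $(t-1)$-regular $L$ on $n-s+1$ vertices avoids the forbidden $K_{a,b}$. For large $n$ take, e.g., a disjoint union of suitable high-girth or bipartite-free near-regular pieces. A $(t-1)$-regular graph of girth at least $5$ contains no $K_{2,2}$ and hence no $K_{a,b}$ with $a\ge2$; such graphs exist on every sufficiently large vertex count, with a near-regular tweak for parity. I expect this existence step to be the main obstacle, and would invoke a standard construction, e.g.\ Erdős–Sachs.

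Then every extremal $G$ has this form. Conversely, every graph of this form is $K_{s,t}$-free and achieves the same maximal $e_p$, so it is extremal. This gives both directions.
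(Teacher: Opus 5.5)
Your overall route is the paper's. You apply Theorem~\ref{thm:main} with $\tau(K_{s,t})=s$ and $\alpha=1-\frac1s$, then do the degree optimisation that the paper packages as Corollary~\ref{cor:L}. You characterise when $K_{s-1}\vee L$ is $K_{s,t}$-free; your case analysis by how many vertices of $U$ go to each side is the paper's argument. For existence you use a high-girth near $(t-1)$-regular $L$, where the paper cites a lemma from \cite{GH2024}.

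\textbf{The gap is your justification that $U$ is a clique, which is false.} It is not true that adding an edge inside $U$ creates no new $K_{s,t}$. The assertion that $K_{s-1}\vee G[X]$ is $K_{s,t}$-free if and only if $G[X]$ is, is also false; take $G[X]\supseteq K_{1,t}$. Your own characterisation shows why. The forbidden $K_{a,b}$ with $a\ge 2$ come exactly from copies of $K_{s,t}$ that put vertices of $U$ on both sides, and such copies use edges inside $U$. Concretely, take $s=t=3$ and $\overline{K_2}\vee mC_4$. This graph contains $K_{2,n-2}$ and is $K_{3,3}$-free. Adding the missing edge $u_1u_2$ creates a $K_{3,3}$ with sides $\{u_1,x_1,x_3\}$ and $\{u_2,x_2,x_4\}$, where $x_1x_2x_3x_4$ is one of the $4$-cycles. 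So the argument that an absent edge could be added without creating $K_{s,t}$ is unavailable. Your later statement that vertices in $U$ have degree $n-1$ regardless is therefore unsupported at that point.

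\textbf{The repair is to reverse the order, as in the proof of Corollary~\ref{cor:L}.}
\begin{enumerate}
\item Assume nothing about $G[U]$. The bound $\Delta(G[X])\le t-1$ still holds for every $K_{s,t}$-free $G$ containing the $K_{s-1,n-s+1}$. This is because the copy formed by $U\cup\{v\}$ and $t$ neighbours of $v$ in $X$ uses no edge inside $U$.
\item Hence $e_p(G)\le (s-1)(n-1)^p+\sum_{v\in X}(s-1+d_{G[X]}(v))^p$. By your parity argument, this is at most $e_p(K_{s-1}\vee L_0)$ for your high-girth near $(t-1)$-regular $L_0$.
\item Since $K_{s-1}\vee L_0$ is $K_{s,t}$-free and $G$ is extremal, equality holds throughout. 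This forces $d_G(u)=n-1$ for every $u\in U$, so $U$ is a clique, and it forces $G[X]$ to be near $(t-1)$-regular.
\item Your characterisation of $K_{s,t}$-freeness of the join then gives the $K_{a,b}$ condition.
\end{enumerate}
With this reordering your proof coincides with the paper's.
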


Another important case for $G$ in Theorem \ref{thm:main} is when the induced subgraph $G[X]$ consists of one single edge. Let $E_t^+$ denote the graph on $t$ vertices with one edge, and let $E(n,s):=K_{s}\vee E_{n-s}^+$. For the even cycle $C_{2k}$, Nikiforov~\cite{N2009} proved that $\ex_p(n, C_{2k})=(k-1+o(1))n^p$ for $p\geq 2$. More recently, Gerbner \cite{G2025b} proved that $\ex_p(n, C_4)=e_p(L(n,1, 1))$ for $p\geq 3$ and $n$ sufficiently large. Here, we may improve these results as follows.

\begin{thm}\label{thm:C2k}
\indent\\[-2.7ex]
\begin{enumerate}
\item[(a)] Let $p>2$. Then there exists $n_0=n_0(p)$ such that, for all $n\ge n_0$, we have $\ex_p(n, C_4)=e_p(L(n,1, 1))$, and $L(n,1, 1)$ is the unique extremal graph.
\item[(b)] Let $k\ge 3$ and $p>\frac{k}{k-1}$. Then there exists $n_0=n_0(k,p)$ such that, for all $n\ge n_0$, we have $\ex_p(n, C_{2k})=e_p(E(n,k-1))$, and $E(n,k-1)$ is the unique extremal graph.
\end{enumerate}
\end{thm}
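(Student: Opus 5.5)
Throughout, I would set $k=2$ for part (a) and $k\ge 3$ for part (b), and first check that the hypotheses of Theorem~\ref{thm:main} hold. By the Bondy--Simonovits bound, $\ex(n,C_{2k})=O(n^{1+1/k})$, so we may take $\alpha=1/k$. The threshold $\frac{1}{1-\alpha}$ then equals $\frac{k}{k-1}$, which is $2$ when $k=2$; these are exactly the ranges of $p$ in (a) and (b). Moreover $\tau(C_{2k})=k\ge 2$, since each colour class of $C_{2k}$ has size $k$.

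Theorem~\ref{thm:main} then says that, for $n$ large, every extremal graph $G$ contains $K_{k-1,n-k+1}$. So $V(G)=U\cup X$ with $|U|=k-1$ and every vertex of $U$ adjacent to every vertex of $X$. What remains is to determine $G[U]$ and $G[X]$. The guiding principle is that adding an edge strictly increases $e_p$. Hence an extremal $G$ is a maximal $C_{2k}$-free graph among those containing this $K_{k-1,n-k+1}$.

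\emph{Part (a).} Here $U=\{u\}$ and $u$ is adjacent to all of $X$. If some $y\in X$ had two neighbours $x,z\in X$, then $uxyzu$ would be a $C_4$. So $\Delta(G[X])\le 1$, i.e.\ $G[X]$ is a matching. Since $e_p$ strictly increases under edge addition, the matching must be maximum, so $G=L(n,1,1)$. Conversely, $L(n,1,1)$ is $C_4$-free, because every $C_4$ would need a vertex of $X$ with two neighbours in $X$. This gives existence, the value $\ex_p(n,C_4)=e_p(L(n,1,1))$, and uniqueness. Alternatively, one can invoke Corollary~\ref{cor:L} with $\ell(C_4)=2$.

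\emph{Part (b).} The key counting step concerns any $C_{2k}$ in a graph containing $K_{k-1,n-k+1}$ on $U\cup X$. Let the cycle use $m\le k-1$ vertices of $U$. Then it uses $2k-m$ vertices of $X$. These split into at most $m$ maximal arcs of consecutive $X$-vertices, so the cycle uses at least $(2k-m)-m\ge 2$ edges inside $X$. Consequently, $K_{k-1}\vee E^+_{n-k+1}=E(n,k-1)$ is $C_{2k}$-free, since it has only one edge inside $X$ and edges inside $U$ do not affect the count.

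Conversely, suppose $G[X]$ has two edges, say $U=\{u_1,\dots,u_{k-1}\}$, and the $x_i$ below are fresh vertices of $X$ (available since $n$ is large).
\begin{itemize}
\item If the two edges are disjoint, $ab$ and $cd$, the cycle $u_1abu_2cdu_3x_3\cdots u_{k-1}x_{k-1}u_1$ has length $2k$.
\item If they form a path $abc$, the cycle $u_1abcu_2x_2u_3\cdots u_{k-1}x_{k-1}u_1$ has length $2k$. For $k=3$ this is $u_1abcu_2x_2u_1$.
\end{itemize}
Hence an extremal $G$ has at most one edge in $X$. By maximality, $G[X]$ has exactly one edge and $G[U]$ is complete, so $G=E(n,k-1)$. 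This yields both the value of $\ex_p(n,C_{2k})$ and uniqueness.

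The only delicate point is the counting lemma showing that $E(n,k-1)$ is $C_{2k}$-free, in particular that edges inside $U$ never help. The inequality "number of $X$-arcs $\le m$" handles this uniformly. The heavy lifting is done by Theorem~\ref{thm:main}.
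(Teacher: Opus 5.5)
Your proposal is correct and follows essentially the same route as the paper, which proves the more general Theorem~\ref{thm:ec} for disjoint unions of even cycles: Bondy--Simonovits gives $\alpha=1/k$, Theorem~\ref{thm:main} supplies $K_{k-1,n-k+1}$, explicit $C_{2k}$'s through any two edges of $G[X]$ force $G\subseteq E(n,k-1)$, and a count of $U$-vertices shows the target graph is free (part (a) is finished via Corollary~\ref{cor:L}, which your direct maximality argument reproduces). Your arc-counting step is a more explicit version of the paper's remark that each $C_{2k}$ must use at least $k$ vertices of $U$; the case $m=0$, where the cycle lies entirely in $X$, should be handled separately, but it is trivial.
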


The \emph{$d$-dimensional discrete hypercube} $Q_d$ is the graph on $2^d$ vertices where the vertex set consists of all $(0,1)$-vectors of length $d$, and two vertices are adjacent if and only if their vectors differ at exactly one coordinate. Note that $Q_d$ is $d$-regular, $\tau(Q_d)=2^{d-1}$, and $\ell(Q_d)=d$. The determination of the Tur\'an number of $Q_d$ is a problem that was suggested by Erd\H{o}s \cite{E1964}, and has attracted significant attention. Erd\H{o}s and Simonovits \cite{ES1969} proved that $\ex(n,Q_3)=O(n^{8/5})$. For general $d$, results of F\"uredi \cite{F1991}, and Alon, Krivelevich and Sudakov \cite{AKS2003}, imply that $\ex(n,Q_d)=O_d(n^{2-\frac{1}{d}})$. Very recently, Janzer and Sudakov \cite{JS2024} proved that $\ex(n,Q_d)=O_d(n^{1+\alpha_d})$, where $\alpha_d :=1-\frac{1}{d-1}+\frac{1}{(d-1)2^{d-1}}$. Using the results of \cite{ES1969,JS2024}, we may obtain the following result.

\newpage

\begin{thm}\label{thm:hypercube}
\indent\\[-2.7ex]
\begin{enumerate}
\item[(a)] Let $p>\frac{5}{2}$. Then there exists $n_0=n_0(p)$ such that the following holds for all  $n\ge n_0$. Let $G$ be a $Q_3$-free graph on $n$ vertices. Then $G$ is an extremal graph with  $e_p(G)=\ex_p(n,Q_3)$ if and only if $G=K_3\vee L$ for some graph $L$ on $n-3$ vertices which is a disjoint union of copies of $C_3$ and $C_4$.
\item[(b)] Let $d\ge 2$, and $\alpha_d :=1-\frac{1}{d-1}+\frac{1}{(d-1)2^{d-1}}$. Let $p>\frac{1}{1-\alpha_d}$. Then there exists $n_0=n_0(d,p)$ such that the following holds. For all $n\ge n_0$ with $n\equiv (2^{d-1}-1)$ \textup{(mod }$d)$, if $G$ is a $Q_d$-free graph on $n$ vertices with  $e_{p}(G)=\ex_p(n,Q_d)$, then $G\in \mathcal L(n,2^{d-1}-1,d-1)$.
\end{enumerate}
\end{thm}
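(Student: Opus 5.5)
Both parts will follow from Theorem~\ref{thm:main} together with Corollary~\ref{cor:L}, once we record $\tau(Q_d)=2^{d-1}$, $\ell(Q_d)=d$, and the Tur\'an exponent. For (a) we use $\ex(n,Q_3)=O(n^{8/5})$ from \cite{ES1969}, so $\alpha=3/5$ and $\frac{1}{1-\alpha}=\frac52$, which matches the hypothesis $p>\frac52$. For (b) we use $\ex(n,Q_d)=O(n^{1+\alpha_d})$ from \cite{JS2024}. For $d=2$ we have $Q_2=C_4$ and $\alpha_2=1/2$, so this case is covered by Theorem~\ref{thm:C2k}(a): $L(n,1,1)\in\mathcal L(n,1,1)$. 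So we may assume $d\ge 3$. Since $\tau(Q_d)=2^{d-1}\ge 2$, Theorem~\ref{thm:main} says that every extremal $G$ contains $K_{s,n-s}$ with $s=2^{d-1}-1$, and $\mathcal F=\{Q_d\}$ is finite.

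\textbf{Part (b).} We invoke Corollary~\ref{cor:L}, so we need a $Q_d$-free graph in $\mathcal L(n,s,d-1)$. The natural choice is $K_s\vee L$ with $L$ a disjoint union of copies of $K_d$. This needs $d\mid n-s$, which is exactly the congruence $n\equiv 2^{d-1}-1 \pmod d$, and then $L$ is $(d-1)$-regular. To see it is $Q_d$-free, suppose $Q_d\subseteq K_s\vee L$ and let $T$ be the set of $Q_d$-vertices lying in $K_s$, so $|T|\le s=2^{d-1}-1$. The remaining $Q_d$-vertices lie in $L$, and each component of $L$ has $d$ vertices. Consider $Q_d-T$. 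Its components lie inside copies of $K_d$, so each has at most $d$ vertices. I would argue by vertex isoperimetry / expansion of $Q_d$: deleting fewer than $2^{d-1}$ vertices from $Q_d$ leaves a component of size larger than $d$. This may fail for small $d$; if so, use the cleaner fact that each vertex of $Q_d-T$ keeps all its $Q_d$-neighbours outside $T$ inside its component, which has at most $d$ vertices. A counting argument should then force $|T|\ge 2^{d-1}$. Concretely, $Q_d-T$ has at least $2^{d-1}+1$ vertices, so it contains an edge between the two parity classes. Then, by Hall-type expansion between parity classes, some small component $A$ has neighbourhood in $T$ of size at least $|A|$, and summing over components gives $|T|\ge 2^{d-1}$. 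This makes $K_s\vee L$ $Q_d$-free, and Corollary~\ref{cor:L} finishes (b).

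\textbf{Part (a).} Here $d=3$, $s=3$ and $\ell=3$. Write the extremal graph as $G\supseteq K_{3,n-3}$ with hub set $U$, $|U|=3$, and $X=V(G)\setminus U$. Adding edges inside $U$ cannot create $Q_3$: its vertex cover number is $4$, and any copy must use at least $4$ vertices of $X$ forming a cover complement structure. So by maximality $U$ is a clique. Next I will characterize the graphs $L=G[X]$ for which $K_3\vee L$ is $Q_3$-free. If $L$ contains a path $P_4$, or a vertex of degree $3$, then together with the $3$ hubs one finds a $Q_3$. For example, $Q_3$ minus an independent set of size $3$ leaves $5$ vertices containing a $P_4$-type structure, and I would check these two embeddings by hand. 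So every component of $L$ has maximum degree $2$ and contains no $P_4$... however $C_4$ contains $P_4$. So the correct claim needs care: what actually matters is which $5$-vertex subgraphs on the $X$ side arise. The plan is to find the exact forbidden configurations, expecting exactly that $L$ has $\Delta\le 2$ and no path on $5$ vertices, so that its components are paths of length at most $3$, $C_3$ and $C_4$. Then $e_p$ is strictly increasing in the $X$-degrees, since $\sum_{x\in X}d^p$ dominates and hub degrees are fixed. Maximizing, every vertex of $X$ should have degree $2$ in $L$, which forces $L$ to be a union of $C_3$'s and $C_4$'s. Since $3a+4b$ represents every $n-3\ge 6$, such an $L$ exists, and conversely all these graphs have the same $e_p$.

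\textbf{Main obstacle.} The hard steps are the $Q_d$-freeness of $K_{2^{d-1}-1}\vee tK_d$ for general $d$, and the exact forbidden-subgraph characterization for $Q_3$. I will attack the first with the parity-class/Hall argument above, and the second by case analysis on how $Q_3$ meets $U$.
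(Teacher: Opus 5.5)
\textbf{Part (b).} Your reduction is the paper's. A copy of $Q_d$ in $K_{2^{d-1}-1}\vee mK_d$ leaves a set $W=V(Q_d)\setminus T$ of at least $2^{d-1}+1$ vertices in $mK_d$. The components of $Q_d[W]$ have at most $d$ vertices, and Corollary~\ref{cor:L} then finishes. The genuine gap is the fact you then need: any $2^{d-1}+1$ vertices of $Q_d$ induce a component on at least $d+1$ vertices (Theorem~\ref{thm:cubehalf}). That is the entire difficulty, and none of your routes proves it.
\begin{itemize}
\item Vertex isoperimetry ignores the component structure and is tight for Hamming balls. At best it yields $|T|=\Omega(2^d/\sqrt d)$, not $|T|\ge 2^{d-1}$, and the latter is sharp (take $W$ a parity class).
\item Your Hall/summing step is invalid as written. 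The bounds $|N(C)\cap T|\ge |C|$ for each component $C$ do not add up to $|T|\ge |W|$, because one vertex of $T$ can border up to $d$ different components. The honest double count of $W$--$T$ edges gives only $|W|\le 2^{d-1}+e(Q_d[W])/d$, which is useless once components contain edges.
\item Huang's theorem by itself only yields a component with more than $\sqrt d$ vertices.
\end{itemize}
The missing idea is the paper's spectral argument. Huang's signed matrix satisfies $A_d^2=dI$, so $\sqrt d$ has multiplicity $2^{d-1}$, and interlacing forces $\lambda_1(A_d[W])\ge\sqrt d$. On the other hand, a connected set of $c\le d$ vertices lies in a subcube of dimension $r\le c-1\le d-1$ (only the directions of a spanning tree vary). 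On that subcube the restricted signed matrix squares to $rI$, so every diagonal block of $A_d[W]$ has largest eigenvalue at most $\sqrt{d-1}$, a contradiction.

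\textbf{Part (a).} Your claim that adding edges inside $U$ cannot create $Q_3$ (so $U$ is a clique by maximality) is false. Every independent $3$-set of $Q_3$ lies in one parity class, and deleting it leaves $K_{1,3}\cup K_1$. Deleting an independent set of at most two vertices leaves a $C_6$ or a vertex of degree $3$. Since $P_5\cup E_{n-8}$ is acyclic with maximum degree $2$, the graph $E_3\vee(P_5\cup E_{n-8})$ is $Q_3$-free, yet $K_3\vee P_5\supseteq Q_3$.

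The repair is to apply Corollary~\ref{cor:L} directly, as you announced at the start. Since $K_3\vee(aC_3\cup bC_4)$ is $Q_3$-free, every extremal graph is $K_3\vee L$ with $L$ $2$-regular, so $U$ is a clique automatically. What remains is to show that $K_3\vee L$ is $Q_3$-free if and only if every cycle of $L$ has length $3$ or $4$; you leave this as an expectation. The paper gets one direction from an explicit embedding of $Q_3$ into $K_3\vee P_5$. The other comes from the $3$-connectivity of $Q_3$. A copy would have at least $5$ vertices outside $U$, while components of $L$ have at most $4$ vertices, so its intersection with $U$ is a $3$-cut. Every $3$-cut of $Q_3$ is a vertex neighbourhood, and removing it leaves a $K_{1,3}$ inside $L$, contradicting $\Delta(L)\le 2$.
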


\begin{rmk}\label{rmk:star}
\textup{It is easy to see that if $F$ is a bipartite graph, then $\tau(F)=1$ if and only if $F$ consists of a star and possibly some isolated vertices. For the case of the star $S_t$, Caro and Yuster \cite{CY2000} showed that $\ex_p(n,S_t)=e_p(L)$ for real $p\ge 1$ and $n\ge t$, where $L$ is a near $(t-1)$-regular graph on $n$ vertices. Moreover, the extremal graphs are precisely all such graphs $L$. Hence, if $\mathcal F$ is a degenerate family of bipartite graphs such that $\tau(F)=1$ for all $F\in\mathcal F$, then $\ex_p(n,\mathcal F)=\ex_p(n,S_t)$ for $n$ sufficiently large, where $S_t$ is a star among $F\in\mathcal F$ with $t\ge 1$ minimum. Moreover, the extremal graphs for $\ex_p(n,\mathcal F)$ are the same as those for $\ex_p(n,S_t)$. However, if $\mathcal F$ is a degenerate family of graphs with $\tau(\mathcal F)=1$, and some $F\in\mathcal F$ is either bipartite and satisfies $\tau(F)\ge 2$, or is non-bipartite, then the determination of $\ex_p(n,\mathcal F)$ becomes another problem.}
\end{rmk}

The rest of this paper is organized as follows. We prove Theorem~\ref{thm:stability} in Section~\ref{sec:stabilityproof}. Based on Theorem~\ref{thm:stability}, we prove in Section~\ref{sec:mainproof} our main result, Theorem \ref{thm:main}, as well as Corollary \ref{cor:L}. In Section~\ref{sec:applicaion}, we prove  Theorems~\ref{thm:Kst}, \ref{thm:C2k} and \ref{thm:hypercube}. We also prove further exact results on $\ex_p(n,F)$ for some other bipartite graphs $F$, namely, disjoint unions of even cycles, caterpillar forests, and spider forests.

\section{Proof of Theorem~\ref{thm:stability}}\label{sec:stabilityproof}

Before we prove Theorem \ref{thm:stability}, we shall recall the classical Zarankiewicz problem~\cite{Z1951}. The problem asks for the determination of the \emph{Zarankiewicz number} $z(m,n;s,t)$, which is the maximum number of edges in a bipartite graph with classes $A$ and $B$,  $|A|=m$, $|B|=n$, and not containing a copy of $K_{s,t}$ with $s$ vertices in $A$ and $t$ vertices in $B$. The following result of K\H{o}v\'ari, S\'os and Tur\'an~\cite{KST1954} on this problem will be used in the proof of Theorem~\ref{thm:stability}.

\begin{thm}[K\H{o}v\'ari, S\'os, Tur\'an~\cite{KST1954}]\label{lem:KST}
Let $m,n,s,t\ge 1$ be integers. Then
\[
  z(m, n;s, t) \leq (t-1)^{\frac{1}{s}} m n^{1-\frac{1}{s}}+(s-1) n.
\]
\end{thm}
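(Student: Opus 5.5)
We prove Theorem~\ref{lem:KST} by the standard double-counting argument over stars $K_{1,s}$ whose centre lies in $B$ and whose $s$ leaves lie in $A$, followed by a convexity (Jensen) estimate.

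Let $H$ be a bipartite graph with classes $A$, $B$, where $|A|=m$, $|B|=n$, and suppose $H$ contains no $K_{s,t}$ with $s$ vertices in $A$ and $t$ vertices in $B$. Let $e=e(H)$, and for $b\in B$ write $d_b=d_H(b)$.

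First, we count the pairs $(S,b)$ with $S\subseteq A$, $|S|=s$, $b\in B$, and $S\subseteq N_H(b)$.
\begin{itemize}
\item For a fixed $b$, the number of such $S$ is $\binom{d_b}{s}$.
\item For a fixed $S$, the number of such $b$ is at most $t-1$, since the $s$ vertices of $S$ have at most $t-1$ common neighbours in $B$ (otherwise $H$ contains the forbidden $K_{s,t}$).
\end{itemize}
Therefore
\[
\sum_{b\in B}\binom{d_b}{s}\le (t-1)\binom{m}{s}\le (t-1)\frac{m^s}{s!}.
\]

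Second, we pass to a convex function. Define $f(x)=\big(\max\{x-s+1,0\}\big)^s$ for real $x\ge 0$. It is convex and nondecreasing. We claim that for every integer $x\ge 0$,
\[
f(x)\le s!\binom{x}{s}=x(x-1)\cdots(x-s+1).
\]
If $x\le s-1$, both sides equal $0$. If $x\ge s-1$, each factor $x-i$ with $0\le i\le s-1$ is at least $x-s+1\ge 0$, so the product is at least $(x-s+1)^s$. Combining with the first step gives
\[
\sum_{b\in B}f(d_b)\le (t-1)m^s .
\]

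Third, apply Jensen's inequality to $f$ with the average degree $\bar d=e/n$. This gives $n\,f(\bar d)\le (t-1)m^s$.
\begin{itemize}
\item If $\bar d\le s-1$, then $e\le (s-1)n$ and the bound holds trivially.
\item Otherwise $(\bar d-s+1)^s\le (t-1)m^s/n$, so $\bar d\le (t-1)^{1/s}m\,n^{-1/s}+s-1$.
\end{itemize}
Multiplying by $n$ yields
\[
e\le (t-1)^{1/s}m\,n^{1-1/s}+(s-1)n,
\]
as required.

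There is no serious obstacle here. The only point needing care is that $\binom{x}{s}$ is not convex on all of $[0,\infty)$ as a polynomial. This is why we replace it by the convex minorant $f$ before applying Jensen, and the truncation at $s-1$ is exactly what produces the additive $(s-1)n$ term.
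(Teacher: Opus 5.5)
Your proof is correct and complete. The double count $\sum_{b\in B}\binom{d_b}{s}\le (t-1)\binom{m}{s}$ works as stated. So does replacing $s!\binom{x}{s}$ by its convex minorant $(\max\{x-s+1,0\})^s$ at integer points, and so does applying Jensen at $\bar d=e/n$. Together these give exactly the stated bound, and they also cover the degenerate cases $t=1$, $s=1$ and $m<s$.

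The paper does not prove this theorem itself. It quotes it from K\H{o}v\'ari, S\'os and Tur\'an and uses it as a black box in the proof of Claim~\ref{clm:stabilityUdegree}. Your argument is the standard classical proof of this result.
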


We will also require the following simple inequality.

\begin{lemma}\label{lem:fixsum}
 Let $p\ge 1$ and $a\ge b\ge c\ge 0$. Then
\[
  a^{p}+b^{p}\le (a+c)^{p}+(b-c)^{p}.
\]
\end{lemma}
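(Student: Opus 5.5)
The plan is to fix the sum $a+b$ and exploit convexity of $x\mapsto x^p$ on $[0,\infty)$ for $p\ge 1$. Set $s:=a+b$ and $f(x):=x^p+(s-x)^p$ for $x\in[0,s]$. Then $f$ is convex, being a sum of two convex functions, and it is symmetric about $x=s/2$. The left side of the inequality is $f(a)$ and the right side is $f(a+c)$. Here $a\ge s/2$ because $a\ge b$. Also $a+c\le s$, since $c\le b$ gives $a+c\le a+b=s$, so $b-c\ge 0$ and both sides are defined.

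A convex function that is symmetric about $s/2$ is nondecreasing on $[s/2,s]$. To see this, take $s/2\le x<y\le s$ and put $x':=s-x$, so that $x'\le s/2\le x<y$. Write $x$ as a convex combination $x=\lambda x'+(1-\lambda)y$, which is possible because $x\in[x',y]$. Convexity then gives
\[
f(x)\le \lambda f(x')+(1-\lambda)f(y)=\lambda f(x)+(1-\lambda)f(y),
\]
where we used $f(x')=f(x)$ by symmetry. Rearranging gives $f(x)\le f(y)$, with the degenerate case $x=x'=s/2$ handled directly.

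Applying this monotonicity with $x=a$ and $y=a+c$, both lying in $[s/2,s]$, yields $f(a)\le f(a+c)$, which is exactly $a^p+b^p\le (a+c)^p+(b-c)^p$.

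An alternative route is the one-line derivative argument. Since $f'(x)=p\bigl(x^{p-1}-(s-x)^{p-1}\bigr)$ and $p\ge 1$, we get $f'(x)\ge 0$ whenever $x\ge s-x$, and integrating from $a$ to $a+c$ gives the same conclusion. There is no real obstacle here. The only care needed is to check the endpoint conditions $b-c\ge 0$ and $a\ge s/2$, and to note that for $p=1$ the inequality holds with equality.
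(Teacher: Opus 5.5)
Your proof is correct, and your main argument takes a different route from the paper; your secondary derivative argument is the paper's proof. The paper sets $f(x)=(a+x)^p+(b-x)^p$ on $[0,b]$ and observes $f'(x)=p\bigl((a+x)^{p-1}-(b-x)^{p-1}\bigr)\ge 0$ because $a+x\ge b-x$. That is your $f$ shifted by $a$.

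Your main route does not differentiate. It uses only convexity of $x\mapsto x^p$ and the reflection symmetry $f(x)=f(s-x)$ to get monotonicity of $f$ on $[s/2,s]$. What this buys is a calculus-free argument that works verbatim for any convex function $\phi$ in place of $x^p$. It is the two-point case of Karamata's inequality, since $(a+c,b-c)$ majorizes $(a,b)$. The paper's one-line derivative check is shorter but relies on the specific monotonicity of $x^{p-1}$ for $p\ge 1$.

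The degenerate case you defer is immediate. For $x=s/2<y$, convexity gives $f(s/2)\le \tfrac12\bigl(f(y)+f(s-y)\bigr)=f(y)$. The case $c=0$ (where your $x=y$) is trivial equality.
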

\begin{proof}
Let $f(x)={(a+x)}^{p}+{(b-x)}^{p}$ for $x\in[0,b]$. Then $f(x)$ is a non-decreasing function since ${f}'(x)=p \big({(a+x)}^{p-1}-{(b-x)}^{p-1} \big)\ge 0$. Hence, $f(c)\ge f(0)$ for all $c\in[0,b]$, which gives the required inequality.
\end{proof}

We may now prove Theorem \ref{thm:stability}.

\begin{proof}[Proof of Theorem~\ref{thm:stability}]

We will follow ideas from the proof of Proposition 4.2 in Gao et al.~\cite{GLMP2025}. Let $F
\in\mathcal F$ be a bipartite graph such that $\tau(F)=\tau(\mathcal F)\ge 2$. Let $V(F)=S\cup T$ be a bipartition of $F$ with $|S|=s=\tau(F)=\tau(\mathcal F)$ and $|T|=t$, so that $2\le s\le t$. Let $p_\ast :=\frac{1}{1-\alpha }$. Choose sufficiently small constants $\delta_1,\delta_2>0$ such that 
\[
  \delta_{1} < 
  \begin{cases} 
  \min\left\{ p-p_\ast, \frac{\alpha}{p-p_\ast} \right\}, & \text{if} ~\alpha \in (0,1); \\
  p-p_\ast = p-1, & \text{if} ~\alpha = 0;
  \end{cases}
  \quad \text{and} \quad 
  \delta_{2}:= \delta_{1} + \frac{\delta_{1}}{1 - \alpha} < \min\left\{\frac{1}{s}, \frac{p-1}{p} \right\}.
\]
Let $G$ be an $\mathcal F$-free graph on $n$ vertices as stated in Theorem \ref{thm:stability}. Let 
\[
A:=\big\{u\in V(G):d_{G}(u)\ge n^{1-\delta_{1}}\big\},\quad B :=V(G) \setminus A.
\]
Fix a sufficiently small constant $0< \varepsilon <\min\big\{4^{-\frac{p}{p-1}},\frac{1}{3s+1}\big\}<\frac{1}{4}$, and let $n$ be sufficiently large, depending on $\alpha, p,s,t,\delta_1,\delta_2$ and $\eps$. We prove several claims.
\begin{clm}[\cite{GLMP2025}, Claim 4.3]\label{clm:stabilityU}   
    $|A|\le n^{\delta_{2}}$.
\end{clm}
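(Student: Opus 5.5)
The plan is a simple double count of the edges incident to $A$. Since $G$ is $\mathcal F$-free, we have $e(G)\le \ex(n,\mathcal F)\le Cn^{1+\alpha}$ for some constant $C=C(\mathcal F)$ and all large $n$. Every vertex of $A$ has degree at least $n^{1-\delta_1}$, so
\[
|A|\,n^{1-\delta_1}\le \sum_{u\in A} d_G(u)\le 2e(G)\le 2Cn^{1+\alpha},
\]
which gives $|A|\le 2Cn^{\alpha+\delta_1}$. However, $\delta_2=\delta_1+\frac{\delta_1}{1-\alpha}$ is small, while $\alpha$ may be large, so this crude bound is far too weak. The argument therefore has to use the hypothesis $e_p(G)\ge(\tau(\mathcal F)-1-o(1))n^p$, or more precisely only the weaker upper-bound information that $e_p(G)\le \ex_p(n,\mathcal F)=O(n^p)$ from Theorem~\ref{thm:GLMP}, together with the defining property of $A$.

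The approach I would take is the following. Suppose for contradiction that $|A|>n^{\delta_2}$, and pass to a subset $A'\subseteq A$ with $|A'|=m:=\lceil n^{\delta_2}\rceil$. Consider the bipartite graph $H$ between $A'$ and $V(G)$, where each vertex is split into two copies so that $H$ is genuinely bipartite, with edges given by adjacency in $G$. Then $e(H)\ge m\,n^{1-\delta_1}$.

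The key step is to observe that $H$ cannot contain a copy of $K_{s,t'}$ with the $s$ vertices in $A'$ and $t'$ large (say $t'=|V(F)|$). Indeed, such a copy, together with the fact that $S$ is an independent vertex cover of $F$ with $|S|=s$, would let us embed $F$ into $G$: map $S$ into the $s$ chosen vertices of $A'$ and $T$ into their common neighbours. One must take care to keep the images of $T$ distinct from those of $S$; this is handled by choosing $t'\ge t+s$. Since $G$ is $\mathcal F$-free, no such copy exists.

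Next apply Theorem~\ref{lem:KST} with the side of size $n$ playing the role of the side carrying the $s$ vertices. That is, bound $z(n,m;s,t')\le (t'-1)^{1/s}\,n\,m^{1-1/s}+(s-1)m$. This yields
\[
m\,n^{1-\delta_1}\le (t')^{1/s}\,n\,m^{1-1/s}+sm,
\]
and hence $m^{1/s}\lesssim n^{\delta_1}$, that is, $m\lesssim n^{s\delta_1}$. This is again not of the form $n^{\delta_2}$, which suggests that Gao et al.\ instead combine the degree sum with the $p$-th power condition.

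So my actual plan is to use $e_p(G)=O(n^p)$ directly:
\[
|A|\,n^{p(1-\delta_1)}\le \sum_{u\in A}d_G^p(u)\le e_p(G)\le \ex_p(n,\mathcal F)\le C'n^p,
\]
which gives $|A|\le C'n^{p\delta_1}$.

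The main obstacle is matching the precise exponent $\delta_2$. To get there I would interpolate, using $d^p\le \Delta^{p-1}d$ on part of the range and the edge bound $e(G)=O(n^{1+\alpha})$ on the rest. The constraint $\delta_1<\frac{\alpha}{p-p_\ast}$ is presumably exactly what makes the combination of the two bounds give the exponent $\delta_1+\frac{\delta_1}{1-\alpha}$. I would try to split $A$ according to whether the degree is at least $n^{1-\delta_1/(1-\alpha)}$ or below it. On the high-degree part I would use the $p$-power count, and on the low-degree part I would use the edge count. I would then tune the two resulting bounds against each other to obtain $|A|\le n^{\delta_2}$ for large $n$.
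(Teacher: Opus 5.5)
There is a genuine gap. None of the global counting bounds you combine in your final plan can reach the exponent $\delta_2$. The $p$-th power count gives only $|A|\le C'n^{p\delta_1}$. Since $\delta_2=\delta_1\bigl(1+\frac{1}{1-\alpha}\bigr)$, this is weaker than $n^{\delta_2}$ whenever $p>1+\frac{1}{1-\alpha}$, which the hypotheses allow. The proposed split of $A$ at degree $n^{1-\delta_1/(1-\alpha)}$ is vacuous: $\frac{\delta_1}{1-\alpha}\ge\delta_1$, so every vertex of $A$ already lies above that threshold.

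More fundamentally, the only information you use is $d_G(u)\ge n^{1-\delta_1}$ on $A$, together with $\sum_u d_G(u)=O(n^{1+\alpha})$ and $\sum_u d_G^p(u)=O(n^p)$. A set of $|A|$ vertices of degree exactly $n^{1-\delta_1}$ satisfies all of these as long as $|A|=O(\min\{n^{\alpha+\delta_1},n^{p\delta_1}\})$. For $\alpha>0$, small $\delta_1$ and $p>1+\frac{1}{1-\alpha}$, that bound is much larger than $n^{\delta_2}$, so no interpolation between the two inequalities can work. The condition $\delta_1<\frac{\alpha}{p-p_\ast}$ plays no role here; it is used in Claim~\ref{clm:stabilityW} to get $\hat p>1$. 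The lower bound on $e_p(G)$ is not needed either.

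The missing idea is to apply the Tur\'an bound on a small vertex set rather than on all of $G$. The paper fixes $V_1\subseteq A$ with $|V_1|=n^{\delta_2}$ and takes a uniformly random $V_2\subseteq V(G)$ of size $n^{\delta_2}$. Each $v\in V_1$ has in expectation at least $n^{\delta_2-\delta_1}$ neighbours in $V_2$, so some choice gives $e(G[V_1\cup V_2])\ge\frac12 n^{2\delta_2-\delta_1}$, whereas $\ex(2n^{\delta_2},\mathcal F)=O(n^{(1+\alpha)\delta_2})$. Since $(1-\alpha)\delta_2-\delta_1=(1-\alpha)\delta_1>0$, this is a contradiction, and it is exactly where the value of $\delta_2$ comes from.

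Alternatively, the K\H{o}v\'ari--S\'os--Tur\'an step you abandoned works once it is oriented correctly. You showed there is no $K_{s,t'}$ with its $s$ vertices in $A'$. So the relevant bound is $z(m,n;s,t')\le (t'-1)^{1/s}mn^{1-1/s}+(s-1)n$, with the $s$-side of size $m$, not $z(n,m;s,t')$. This gives $m\bigl(n^{1-\delta_1}-(t'-1)^{1/s}n^{1-1/s}\bigr)\le (s-1)n$. Because $\delta_1<\delta_2<\frac1s$, it follows that $m\le (s-1+o(1))n^{\delta_1}<n^{\delta_2}$, a contradiction; in fact $|A|=O(n^{\delta_1})$. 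Also $t'=t$ already suffices, since common neighbours of $s$ vertices are automatically distinct from them.

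This route uses the bipartite $F\in\mathcal F$ fixed at the start of the proof and the condition $\delta_2<\frac1s$. It is essentially the computation of Claim~\ref{clm:stabilityUdegree} without presupposing the bound on $|A|$. The paper's sampling argument, by contrast, needs only $\ex(n,\mathcal F)=O(n^{1+\alpha})$.
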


\begin{proof}
Suppose to the contrary that $|A|> n^{\delta_{2}}$. Fix a set $V_{1}\subseteq A$ of size $n^{\delta_{2}}$. We choose a set $V_{2}\subseteq V(G)$ of size $n^{\delta_2}$, uniformly at random. Let $H:=G[V_1\cup V_2]$ be the induced subgraph of $V_{1}\cup V_{2}$ in $G$. For $v\in V_1$, the expectation $\mathbb E[d_{H}(v)]$ satisfies
\[
  \mathbb E[d_{H}(v)] \geq \mathbb E[|N_G(v)\cap V_2|]=n^{\delta_2}\cdot \frac{d_G(v)}{n}\ge
  n^{\delta_{2}}\cdot\frac{n^{1-\delta_1}}{n} = n^{\delta_{2}-\delta_{1}}.
\]
Then, according to the linearity of expectation, we have
\[
  \mathbb E\bigg[\frac{1}{2}\sum_{v\in V_{1}}d_{H}(v)\bigg] = 
  \frac{1}{2}\sum_{v\in V_{1}}\mathbb E[d_{H}(v)] \geq 
  \frac{1}{2}n^{\delta_{2}} \cdot n^{\delta_{2}-\delta_{1}} = \frac{n^{(1-\alpha)\delta_{1}}}{2^{2+\alpha}} \cdot 
  (2n^{\delta_{2}})^{1+\alpha} > \ex(2n^{\delta_{2}}, \mathcal F).
\]
Therefore, there exists a choice of $V_{2}$ such that $e(H)>\ex(2n^{\delta_{2}}, \mathcal F)\ge\ex(|V_1\cup V_2|, \mathcal F)$. This means that $H$ is not $\mathcal F$-free, a contradiction.
\end{proof}

\begin{clm}\label{clm:stabilityUdegree}
  $\sum_{u\in A} d_{G}(u) \le (s-1+\varepsilon)n.$
\end{clm}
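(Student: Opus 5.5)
We bound $\sum_{u\in A} d_G(u)$ by double counting copies of $K_{s,t}$-type structures between $A$ and $V(G)$, using the fact that $G$ contains no $F$. Since $F\subseteq K_{s,t}$ with the $s$-side being an independent vertex cover, it suffices to note that $G$ contains no $K_{s,t}$ at all. Strictly speaking, we must also avoid creating $F$ when the $t$-side overlaps $A$, but $K_{s,t}\supseteq F$ handles this automatically.

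Consider the bipartite graph $H$ with classes $A$ and $B' := V(G)$, taken as a disjoint copy, where $a\in A$ is joined to $v\in B'$ if $av\in E(G)$. Then $e(H)=\sum_{u\in A}d_G(u)$. A copy of $K_{s,t}$ in $H$ with $s$ vertices in $A$ and $t$ vertices in $B'$ gives $s$ vertices of $A$ with $t$ common neighbours in $G$. These are distinct vertices, since a vertex is not its own neighbour, and they are disjoint from the $s$ chosen vertices, since the common neighbours of a set exclude the set itself. Hence $G\supseteq K_{s,t}\supseteq F$, a contradiction. So $H$ is $K_{s,t}$-free in this oriented sense, and Theorem~\ref{lem:KST}, applied with the roles arranged so that $m=n$ and the second class is $A$, gives
\[
\sum_{u\in A} d_G(u)\le z(n,|A|;t,s)\le (s-1)^{1/t}\, n\,|A|^{1-1/t}+(t-1)|A|,
\]
or, in the other orientation,
\[
z(|A|,n;s,t)\le (t-1)^{1/s}|A|\,n^{1-1/s}+(s-1)n.
\]

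The second orientation is the useful one. By Claim~\ref{clm:stabilityU}, $|A|\le n^{\delta_2}$, so the first term is at most $(t-1)^{1/s}n^{1-1/s+\delta_2}$. Since $\delta_2<1/s$ by the choice of constants, this term is $o(n)$, and in particular at most $\varepsilon n$ for $n$ sufficiently large. Therefore $\sum_{u\in A}d_G(u)\le (s-1+\varepsilon)n$.

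The only delicate points are choosing the correct orientation of the Zarankiewicz bound, so that the $(s-1)n$ term carries the factor $n$ and the sublinear term carries $|A|$, and checking that a $K_{s,t}$ in the auxiliary bipartite graph lifts to a genuine $K_{s,t}$ in $G$. The latter holds because the $t$ common neighbours are automatically distinct from the $s$ chosen vertices. The condition $\delta_2<1/s$ was built into the constants precisely for this step.
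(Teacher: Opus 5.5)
Your argument is correct and is essentially the paper's proof. Both apply the K\H{o}v\'ari--S\'os--Tur\'an bound with $A$ as the first class, so the linear term is $(s-1)$ times the size of the other side. The remaining term is then at most $(t-1)^{1/s}n^{1-1/s+\delta_2}\le\varepsilon n$, because $|A|\le n^{\delta_2}$ and $\delta_2<\frac{1}{s}$.

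The only difference is bookkeeping. The paper writes $\sum_{u\in A}d_G(u)=2e(G[A])+q$, bounds $2e(G[A])\le n^{2\delta_2}$ trivially, and applies the bound only to the $q$ edges between $A$ and $B$, after checking $B\neq\emptyset$. Your auxiliary bipartite graph between $A$ and a copy of $V(G)$ absorbs the edges inside $A$ directly, so you need neither $2\delta_2<1$ nor $B\neq\emptyset$.
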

\begin{proof}
We may assume that $A\neq\emptyset$. If $B=\emptyset$, then $d_G(v)\ge n^{1-\delta_1}$ for all $v\in V(G)$, and $e(G)\ge\frac{1}{2}n\cdot n^{1-\delta_1}=\frac{1}{2}n^{2-\delta_1}\gg n^{1+\alpha}$, since $\delta_2=\delta_1+\frac{\delta_1}{1-\alpha}<\frac{1}{2}$ implies $2-\delta_1>1+\alpha$. This contradicts $e(G)\le\ex(n,\mathcal F)=O\big(n^{1+\alpha}\big)$. Thus, we have  $B\neq\emptyset$. Let $q$ be the number of edges of $G$ between $A$ and $B$. Note that $F\subseteq K_{s,t}$ and $G[A\cup B]$ is $F$-free. By Theorem~\ref{lem:KST} and $|A|\le n^{\delta_{2}}$ from Claim~\ref{clm:stabilityU}, we have
\begin{align*}
\sum_{u\in A} d_{G}(u) &=2e(G[A])+q\le n^{2\delta_2}+z(|A|,|B|; s, t)\\
&\leq n^{2\delta_2}+(t-1)^{\frac{1}{s}}|A||B|^{1-\frac{1}{s}} + (s-1)|B| \\
  &\leq n^{2\delta_2}+(t-1)^{\frac{1}{s}} n^{\delta_{2}} n^{1-\frac{1}{s}} + (s-1) n \\
  &\leq (s-1+\varepsilon) n.
\end{align*}
The last inequality is due to $2\delta_{2}<1$ and $\delta_2+1-\frac{1}{s}<1$, since $\delta_{2}<\frac{1}{s}$ and $s\ge 2$. 
\end{proof}

\begin{clm}\label{clm:stabilityW}
$\sum_{u \in B} d_{G}^p(u) \leq \varepsilon n^{p}.$
\end{clm}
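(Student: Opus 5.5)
We bound $\sum_{u\in B} d_G^p(u)$ by pulling out the small maximum degree on $B$ and then controlling the remaining sum by the edge count:
\[
\sum_{u\in B} d_G^p(u)\le \Big(\max_{u\in B} d_G(u)\Big)^{p-1}\sum_{u\in B} d_G(u)\le \big(n^{1-\delta_1}\big)^{p-1}\cdot 2e(G).
\]
Every $u\in B$ satisfies $d_G(u)<n^{1-\delta_1}$ by the definition of $B$. Since $G$ is $\mathcal F$-free, $e(G)\le \ex(n,\mathcal F)\le Cn^{1+\alpha}$ for some constant $C$. Hence
\[
\sum_{u\in B} d_G^p(u)\le 2C\, n^{(1-\delta_1)(p-1)+1+\alpha}.
\]
It therefore suffices to show that this exponent is strictly less than $p$. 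Once that holds, the bound is $o(n^p)$ and so at most $\varepsilon n^p$ for $n$ sufficiently large.

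The exponent condition reads
\[
(1-\delta_1)(p-1)+1+\alpha<p \iff \alpha<\delta_1(p-1).
\]
For $\alpha=0$ this is immediate, since $\delta_1>0$ and $p>1$. For $\alpha\in(0,1)$ we need $\delta_1>\frac{\alpha}{p-1}$. Here lies the main obstacle: this lower bound on $\delta_1$ conflicts with the requirement that $\delta_1$ be small, which the earlier claims needed. The crude bound above is too lossy for $p$ close to $p_\ast$, because it charges every degree in $B$ the full maximum $n^{1-\delta_1}$ as a multiplier.

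To get around this, I would split $B$ dyadically into levels $B_j=\{u: 2^{-j-1}n^{1-\delta_1}\le d_G(u)< 2^{-j}n^{1-\delta_1}\}$ and reuse the random-sampling argument of Claim~\ref{clm:stabilityU} at each degree scale. That argument shows the number of vertices of degree at least $D$ is at most roughly $(n/D)^{1+1/(1-\alpha)}$ up to small slack, coming from $|V_1|\cdot |V_1| D/n\lesssim |V_1|^{1+\alpha}$. A level of degree about $D$ then contributes about $D^p\min\{n,(n/D)^{1/(1-\alpha)}\}$. This is maximized either at $D\approx n^{\alpha}$, giving $n^{1+p\alpha}=o(n^p)$ since $p>\frac{1}{1-\alpha}$, or at the top scale $D=n^{1-\delta_1}$, giving $n^{p-\delta_1(p-p_\ast)+\text{slack}}=o(n^p)$. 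The slack is absorbed by the constraint $\delta_1<p-p_\ast$ together with the choice of $\delta_2$.

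Summing over the $O(\log n)$ levels keeps the total $o(n^p)$, so the total is at most $\varepsilon n^p$. The step I expect to require the most care is verifying that the exponent at the top scale is genuinely below $p$ with the paper's specific choices of $\delta_1,\delta_2$. If it is not, I would replace the threshold $n^{1-\delta_1}$ in this claim by a suitably adjusted one.
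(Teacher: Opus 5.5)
Your proof is correct in substance, but it takes a different route from the paper. The paper does not count vertices by degree scale. It splits $d_G(u)=d_G(u,A)+d_G(u,B)$ and disposes of the $A$-part using $|A|\le n^{\delta_2}$ and $\delta_2<\frac{p-1}{p}$. For the $B$-part it interpolates $d^p\le d^{\hat p}(n^{1-\delta_1})^{p-\hat p}$ with the sub-threshold exponent $\hat p=\frac{1-(p-p_\ast)\delta_1}{1-\alpha}\in(1,p_\ast)$. It then quotes the first case of Theorem~\ref{thm:GLMP}, $\ex_{\hat p}(n,\mathcal F)=O(n^{1+\hat p\alpha})$, as a black box; the condition $\delta_1<\frac{\alpha}{p-p_\ast}$ is there to keep $\hat p>1$. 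You replace that black box with a direct tail estimate on the degree sequence, obtained by running the sampling argument of Claim~\ref{clm:stabilityU} at every dyadic scale. Your version is self-contained. It needs neither the split into $A$- and $B$-neighbours (so neither $|A|\le n^{\delta_2}$ nor $\delta_2<\frac{p-1}{p}$) nor any upper bound on $\delta_1$. It also gives an explicit power saving, $n^{p-\delta_1(p-p_\ast)+o(1)}$. The paper's route is shorter once Theorem~\ref{thm:GLMP} is available.

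Two corrections are needed in your write-up.

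First, the sampling bound is $|\{u:d_G(u)\ge D\}|\le c\,(n/D)^{1/(1-\alpha)}$, not $(n/D)^{1+1/(1-\alpha)}$. Your own derivation and your later computation already use the correct exponent.

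Second, this bound holds with a constant $c$ depending only on $\alpha$ and the implied constant in $\ex(n,\mathcal F)=O(n^{1+\alpha})$. Indeed, $\frac12 N^2D/n$ exceeds the Tur\'an bound for $2N$ vertices as soon as $N\ge c(n/D)^{1/(1-\alpha)}$. So no polynomial slack enters, the top scale contributes $o(n^p)$ for every $\delta_1>0$, and your closing hedge about adjusting the threshold is unnecessary.

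Your sentence saying slack is ``absorbed by $\delta_1<p-p_\ast$ together with the choice of $\delta_2$'' should be deleted rather than relied on. If you imported the extra factor $n^{\delta_1}$ built into Claim~\ref{clm:stabilityU} (where $\delta_2=\delta_1+\frac{\delta_1}{1-\alpha}$), the top-scale term would become $n^{p-\delta_1(p-p_\ast-1)}$. That is not $o(n^p)$ when $p\le p_\ast+1$.
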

\begin{proof}
Let\\[-1.2ex]
\[
  \hat{p}:= \left\{
  \begin{array}{ll}
  \frac{1-(p-p_{*}) \delta_{1}}{1 - \alpha} < p_{*} < p,& \textrm{if } \alpha \in (0,1); \\
  p_{*} = 1, & \textrm{if } \alpha = 0.
  \end{array}
  \right.
\]

Firstly, assume that $\sum_{u \in B} d_G^p(u,B)>\frac{\varepsilon} {2^{p}} n^{p}$. If $0< \alpha <1$, we have $1<\hat{p}<\frac{1}{1-\alpha }$ since $0<\delta_{1}<\frac{\alpha}{p-p_{*}}$. Note that
\[
  \frac{\varepsilon} {2^{p}} n^{p} < \sum_{u \in B} d_G^p(u,B) = \sum_{u \in B} d_G^{\hat{p} + (p - \hat{p})}(u,{B}) \leq \sum_{u \in B} d_G^{\hat{p}}(u,B) \cdot (n^{1 - \delta_{1}})^{p - \hat{p}}.
\]
Thus, we obtain
\[
  \sum_{u \in B} d_G^{\hat{p}}(u,B) > \frac{\frac{\varepsilon}{2^{p}} n^{p}}{(n^{1-\delta_{1}})^{p-\hat{p}}} = \frac{\varepsilon}{2^{p}} \cdot n^{1+\hat{p}\alpha+(p_{*}-\hat{p})\delta_{1}}\gg n^{1+\hat{p}\alpha }.
\]
By Theorem~\ref{thm:GLMP}, we have $\sum_{u\in B}d_G^{\hat{p}}(u)\ge\sum_{u \in B} d_G^{\hat{p}}(u,B)>\ex_{\hat{p}} (n, \mathcal F)$, a contradiction. 
If $\alpha =0$, similarly, we have
\[
  \frac{\varepsilon} {2^{p}} n^{p} < \sum_{u \in B} d_G^p(u,B) = \sum_{u \in B} d_G(u,B)\cdot d_G^{p-1}(u,B) \leq \sum_{u \in B} d_G(u,B) \cdot (n^{1 - \delta_{1}})^{p-1},
\]
so that $\sum_{u \in B} d_G(u,B)>\frac{\varepsilon}{2^{p}} n^{1+(p-1) \delta_{1}}\gg n$. Thus, $\sum_{u \in B} d_G(u,B)>2\,\ex(n, \mathcal F)$, another contradiction. 

Therefore, we have $\sum_{u \in B} d_G^p(u,B)\le \frac{\varepsilon}{2^{p}} n^{p}$. Since $|A|\le n^{\delta_2}$ from Claim \ref{clm:stabilityU}, we have
  \begin{align*}
  \sum_{u \in B} d_{G}^p(u) &= \sum_{u \in B} (d_G(u,A) + d_G(u,B))^{p} \leq 2^{p-1} \left( \sum_{u \in B} d_G^p(u,A) + \sum_{u \in B} d_G^p(u,B) \right) \\
  &\leq 2^{p-1} \left( n^{1+\delta_{2}p} + \frac{\varepsilon}{2^{p}} n^{p} \right)\leq \varepsilon n^{p}.
  \end{align*}
  The first inequality follows from Jensen's inequality on the convex function $x^p$ for $x\ge 0$. The last inequality is due to $\delta_{2}<\frac{p-1}{p}$.
\end{proof}

Now we complete the proof of Theorem \ref{thm:stability}. Let 
\[
U:=\big\{u\in V(G):d_{G}(u)\ge (1-{3\varepsilon})n\big\}\subseteq A.
\]
We will show that $|U|= s-1$. Assume that $|U|\le s-2$. Then, by Lemma~\ref{lem:fixsum} and Claim~\ref{clm:stabilityUdegree}, and recalling that $s\ge 2$ and $0< \varepsilon < 4^{-\frac{p}{p-1}}<\frac{1}{4}$, we have 
\begin{equation*}
      \sum_{u \in A} d_{G}^p(u)\le (s-2) n^{p}+(1-{3\varepsilon})^{p} n^{p}+(4\varepsilon)^{p} n^{p} \le (s-1-2\varepsilon) n^{p}.
\end{equation*}
Together with Claim~\ref{clm:stabilityW}, we have
\[
e_p(G) = \sum_{u \in A} d_{G}^p(u) + \sum_{u \in B} d_{G}^p(u) \le (s-1-\varepsilon) n^{p}.
\]
This is a contradiction to $e_p(G)\ge(s-1-o(1))n^{p}$. Also, if $|U|\ge s$, then since $\eps<\frac{1}{3s+1}$,
\[
\sum_{u\in A}d_G(u) \ge \sum_{u\in U}d_G(u)\ge (1-3\eps)sn>(s-1+\eps)n,
\]
which contradicts Claim~\ref{clm:stabilityUdegree}. Therefore, we have $|U|=s-1$. Finally, by Claim~\ref{clm:stabilityUdegree}, we have $d_G(v)\le (3s-2)\eps n$ for $v\in A\setminus U$. Also, $d_G(v)<n^{1-\delta_1}$ for $v\in B$. Therefore, we have $d_G(u)=(1-o(1))n$ for all $u\in U$, and $d_G(v)=o(n)$ for all $v\not\in U$. This completes the proof of Theorem~\ref{thm:stability}.
\end{proof}

\section{Proof of the main result}\label{sec:mainproof}
In this section we prove Theorem~\ref{thm:main} and Corollary \ref{cor:L}.
\begin{proof}[Proof of Theorem \ref{thm:main}.]
Since $\mathcal F$ is finite, we may choose a sufficiently large $n_0=n_0(\mathcal F,p)$, and let $n\ge n_0$. Since $e_{p}(G) =\ex_p(n,\mathcal F)\ge (\tau(\mathcal F)-1-o(1)) n^{p}$, let $U$ be the set of the $\tau(\mathcal F)-1$ vertices of $G$ given by Theorem \ref{thm:stability}. Then, $\min_{u\in U}d_G(u)=(1-o(1))n$. Let
\[
   X: = \bigcap_{u \in U} N_{G}(u),\quad  Y: = V(G) \setminus (U \cup X).
\]
Let $H$ be the graph obtained from $G$ by removing all edges between $X$ and $Y$, all edges inside $Y$, and then adding all missing edges between $U$ and $Y$. Note that $|V(F)|$ is bounded over $F\in\mathcal F$ since $\mathcal F$ is finite. For any $F\in\mathcal F$, suppose that $H$ contains $F$ as a subgraph. Let $W$ be the vertex set of a copy of $F$ in $H$. If $W\cap Y=\emptyset$, then the same copy of $F$ lies in $G$, a contradiction. Suppose that $W\cap Y\neq\emptyset$. Since $|X|=(1-o(1))n$, $|Y|=o(n)$ and $n\ge n_0$, we may choose a subset $Z\subseteq X\setminus W$ of size $|W\cap Y|$. In $H$, vertices of $W\cap Y$ have neighbours only in $U$, while in $G$, every vertex of $Z$ is adjacent to every vertex of $U$. Therefore, $G[(W\setminus Y) \cup Z]$ also contains a copy of $F$, a contradiction. Thus, $H$ is $\mathcal F$-free. We will show that if $Y$ is not empty, then $e_p(G)<e_p(H)$, which will contradict  $e_p(G)=\ex_p(n,\mathcal F)$. Hence, $Y$ must be empty, so $G$ must contain $K_{\tau(\mathcal F)-1,n-\tau(\mathcal F)+1}$. 

Choose a bipartite graph $F\in\mathcal F$ that satisfies $\tau(F)=\tau(\mathcal F)$ and $\ell(F)=\ell(\mathcal F)$. Let $\ell :=\ell(F)-1=\ell(\mathcal F)-1$, $s:=\tau(F)=\tau(\mathcal F)\ge 2$, and $t:=|Y|\ge 1$. If there exists a vertex $v\in X\cup Y$ such that $d_{G}(v,X)\ge \ell+1$, then there exists a copy of $F$ in $G$. So we have $d_G(v,X) \le \ell$ for any $v\in X\cup Y$, and $\sum_{v \in X} d_G(v,Y)=\sum_{v\in Y}d_G(v,X)\le \ell t$. 
Therefore,
\begin{align}\label{eq:bipartiteC}
    \sum_{v\in X} ( d_G^p(v)-d_H^p(v))&= \sum_{v\in X} \big((s-1+d_G(v,X)+d_G(v,Y))^p-(s-1+d_G(v,X))^p\big) \nonumber \\
    &\le \sum_{v\in X} p(s-1+\ell+t)^{p-1} d_G(v,Y)\nonumber \\
    & \le p(s-1+\ell+t)^{p-1} \ell t=O(t^p),
\end{align}
where the first inequality follows from the convexity of the function $x^p$ on $x\ge 0$.

Next, since $d_G(v,U) \leq s-2$ for any $v \in Y$, counting the number of missing edges between $U$ and $Y$ in $G$ gives $\sum_{v \in U} (t-d_G(v,Y) )=\sum_{v \in Y} (s-1-d_G(v,U) )\ge t$. Thus
\begin{align} 
        \sum_{v\in U} ( d_H^p(v)-d_G^p(v))&=\sum_{v\in U}\big((d_G(v,U\cup X)+t)^p-(d_G(v,U\cup X)+d_G(v,Y))^p\big)\nonumber\\
        &\ge \sum_{v\in U} p(d_G(v,U\cup X)+d_G(v,Y))^{p-1}(t-d_G(v,Y))\nonumber\\
&\ge pt|X|^{p-1}= pt n^{p-1}+o(tn^{p-1}), \label{eq:bipartiteU}
\end{align}
where again the first inequality follows from the convexity of the function $x^p$ on $x\ge 0$.

Now, we obtain upper bounds on $\sum_{v \in Y} d_{G}^p(v)$ and estimate $e_p(G)-e_p(H)$. We consider two cases on $t$.\\[1ex]
\noindent {\textbf{Case 1.}} $1\le t<\frac{1}{2}n^{1-\frac{1}{p}}$.\\[1ex]
\indent For all $v \in Y$, we have $d_G(v,Y) \leq t-1$, $d_G(v,U\cup X) \leq s+\ell-2$, and $d_H(v)=s-1$. Thus, $\sum_{v \in Y} d_{G}^p(v)\le t(t+s+\ell-3)^{p}= t^{p+1}+O(t^{p})$. Together with \eqref{eq:bipartiteC} and \eqref{eq:bipartiteU}, we have 
 \begin{align*}
 e_{p}(G) - e_{p}(H) &= \sum_{v\in U} ( d_G^p(v)-d_H^p(v)) + \sum_{v\in X} ( d_G^p(v)-d_H^p(v)) + \sum_{v\in Y} (d_G^p(v)-d_H^p(v))  \\
 &\leq -pt n^{p-1}-o(tn^{p-1}) + O(t^p)  + t^{p+1}-t(s-1)^p.
 \end{align*}
Since $t=o(n)$, we have $t^p=o(tn^{p-1})$, and since $1\le t<\frac{1}{2}n^{1-\frac{1}{p}}$, we have $t^{p+1}<\frac{1}{2}tn^{p-1}$. Therefore we obtain $e_{p}(G) < e_{p}(H)$ for $n\ge n_0$.\\[1ex]
\noindent {\textbf{Case 2.}} $t\ge \frac{1}{2} n^{1-\frac{1}{p}}$.\\[1ex]
\indent By Theorem \ref{thm:GLMP}, we have $\sum_{v\in Y} d_G^p(v,Y) \le \ex_p(t,\mathcal F)=(s-1+o(1))t^{p} \leq st^{p}$. Again, for all $v\in Y$, we have $d_G(v,U\cup X) \leq s+\ell-2$ and $d_H(v)=s-1$. By Jensen's inequality on the convex function $x^p$ for $x\ge 0$, we have
\begin{align*}
  \sum_{v \in Y} d_{G}^p(v) &= \sum_{v \in Y} (d_G(v,U\cup X) + d_G(v,Y))^{p}\\
  &\leq 2^{p-1} \Bigg( \sum_{v \in Y} d_G^p(v,U\cup X) + \sum_{v \in Y} d_G^p(v,Y) \Bigg)\\
  &\leq 2^{p-1} ( t(s+\ell-2)^{p} + st^{p} )=O(t^p).
\end{align*}
Together with \eqref{eq:bipartiteC} and \eqref{eq:bipartiteU}, we have 
\begin{align*}
 e_{p}(G) - e_{p}(H) &= \sum_{v\in U} ( d_G^p(v)-d_H^p(v)) + \sum_{v\in X} ( d_G^p(v)-d_H^p(v)) + \sum_{v\in Y} ( d_G^p(v)-d_H^p(v))  \\
&\leq -pt n^{p-1}-o(tn^{p-1}) + O(t^p) -  t(s-1)^{p}.
\end{align*}
Again, we have $e_{p}(G) < e_{p}(H)$ for $n\ge n_0$, since $t^p=o(tn^{p-1})$.  \\[1ex]
\indent We conclude that $Y=\emptyset$, and $G$ must contain $K_{\tau(\mathcal F)-1,n-\tau(\mathcal F)+1}$. This completes the proof of Theorem~\ref{thm:main}.
\end{proof}

\begin{proof}[Proof of Corollary \ref{cor:L}]
According to Theorem \ref{thm:main}, we choose $n_0=n_0(\mathcal F,p)$ and let $n\ge n_0$. Then $G$ admits a partition $V(G)=U\cup X$, with $|U|=q$ and $|X|=n'$ where $q:=\tau(\mathcal F)-1$ and $n':=n-\tau(\mathcal F)+1$; and all edges between $U$ and $X$ are in $G$. If there exists a vertex $v\in X$ such that $d_G(v,X)\ge \ell(\mathcal F)$, then $G$ contains some $F\in\mathcal F$ which attains $\ell(\mathcal F)$. Thus, we have $\Delta(G[X])\le r$ where $r:=\ell(\mathcal  F)-1\ge 0$. Let $H:=K_{\tau(\mathcal F)-1}\vee L\in \mathcal L(n,\tau(\mathcal F)-1,\ell(\mathcal F)-1)$ be an $\mathcal F$-free graph. If $rn'$ is even, then $L$ is $r$-regular, and $e_p(G)\le q(n-1)^p+n'(q+r)^p=e_p(H)$. If $rn'$ is odd, then $r\ge 1$, and some vertex of $G[X]$ has degree at most $r-1$. We have $e_p(G)\le q(n-1)^p+(n'-1)(q+r)^p+(q+r-1)^p=e_p(H)$. Since $e_p(G)=\ex_p(n,\mathcal F)\ge e_p(H)$, we have $e_p(G)=e_p(H)$. For this equality to hold, we have $d_G(u)=n-1$ for all $u\in U$; and $G[X]$ is $r$-regular if $rn'$ is even, $G[X]$ has $n'-1$ vertices of degree $r$ and one vertex of degree $r-1$ if $rn'$ is odd. We conclude that $G\in\mathcal L(n,\tau(\mathcal F)-1,\ell(\mathcal F)-1)$. 
\end{proof}

\section{Applications of the main result}\label{sec:applicaion}

In this section, we prove Theorems \ref{thm:Kst},~\ref{thm:C2k}, and \ref{thm:hypercube}. We also determine exact results for $\ex_p(n,F)$, for some other bipartite graphs $F$ with $n$ sufficiently large.

\subsection{Proof of Theorems~\ref{thm:Kst} and \ref{thm:C2k}}
We first prove Theorem~\ref{thm:Kst}.
\begin{proof}[Proof of Theorem \ref{thm:Kst}]
For $K_{s,t}$ with $2\le s\leq t$, we have $\tau(K_{s,t})=s$ and $\ell(K_{s,t})=t$. By the K\H{o}v\'ari-S\'os-Tur\'an theorem \cite{KST1954}, we have $\ex(n,K_{s,t})=O(n^{2-\frac{1}{s}})$. Thus for $p>s$, choose $n_0=n_0(s,t,p)$ according to Corollary \ref{cor:L}, and let $n\ge n_0$. We first show that a graph $H:=K_{s-1}\vee L\in \mathcal L(n,s-1,t-1)$ is $K_{s,t}$-free if and only if $L$ does not contain any $K_{a,b}$ such that $2\le a\le \min\{b,s\}$ and $a+b=t+1$. 

Suppose that $H$ contains a copy of $K_{s,t}$. Then there exist $t+1$ vertices from the $K_{s,t}$, all belonging to $L$. These vertices form a copy of $K_{a,b}$ in $L$, for some $1\le a\le b$ with $a+b=t+1$. Since $L$ does not contain the star $S_t$, we have $a\ge 2$. Moreover, we have $a\le s$ since the $K_{a,b}$ is a subgraph of the $K_{s,t}$. Hence, we have $2\le a\le\min\{b,s\}$.

Conversely, suppose that $L$ contains a copy of $K_{a,b}$, for some $a,b$ such that $2\le a\le \min\{b,s\}$ and $a+b=t+1$. Note that $t=a+b-1>b$ and $(s-a)+(t-b)=s-1$. Thus, we may append $s-a$ and $t-b$ vertices respectively from the copy of $K_{s-1}$ in $H$ to the classes of the $K_{a,b}$, and obtain a copy of $K_{s,t}$ in $H$.

Now, it can be shown that for $n$ sufficiently large, one may construct a near $(t-1)$-regular graph $L$ on $n-s+1$ vertices with arbitrarily large girth $g(L)$ (see for example, \cite{GH2024}, Lemma 2.2). Letting $g(L)\ge 5$ and $H :=K_{s-1}\vee L\in \mathcal L(n,s-1,t-1)$, it follows from above that $H$ is $K_{s,t}$-free. Hence by Corollary \ref{cor:L}, if $G$ is a $K_{s,t}$-free graph on $n\ge n_0$ vertices with $e_p(G)=\ex_p(n,K_{s,t})$, then we must necessarily have $G\in \mathcal L(n,s-1,t-1)$. Moreover, we have $G=K_{s-1}\vee L$ with $L$ not containing any $K_{a,b}$ as described above.
\end{proof}

Secondly, we prove Theorem~\ref{thm:C2k}. We shall in fact prove the following stronger result where $F$ is a disjoint union of even cycles.

\begin{thm}\label{thm:ec}
Let $t\ge 1$, and let $F=\bigcup_{i=1}^t {C_{2k_i}}$ be a disjoint union of even cycles, where $2\le k_1\le k_2\le \cdots \le k_t$, and $b=\sum_{i=1}^t k_{i}$. Let $p>\frac{k_1}{k_{1}-1}$. Then there exists $n_0=n_0(F,p)$ such that the following holds for all $n\ge n_0$. If $G$ is an $F$-free graph on $n$ vertices with $e_p(G)=\ex_p(n, F)$, then
\[
G=
\begin{cases}
L(n,b-1, 1), &\textup{\emph{if $k_{i}=2$ for all $i$;}}\\
E(n,b-1), &\textup{\emph{if $k_t\ge 3$.}}
\end{cases}
\]
\end{thm}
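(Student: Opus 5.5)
Our plan is to apply Theorem~\ref{thm:main} with $\mathcal F=\{F\}$ and then pin down the graph induced on the large side. We first compute the parameters. Each even cycle $C_{2k}$ has $\tau(C_{2k})=k$, and its minimum independent covers are exactly its two colour classes. Hence $\tau(F)=b$, and every minimum independent cover consists of one colour class from each cycle. Every vertex has degree $2$, so $\ell(F)=2$. For the Tur\'an number, $F$ is a forest of cycles, and by the Bondy--Simonovits bound together with the standard fact that $\ex(n,\bigcup C_{2k_i})$ is governed by the shortest cycle up to an $O(n)$ term, we get $\ex(n,F)=O(n^{1+1/k_1})$. We therefore take $\alpha=1/k_1$, so that $\frac{1}{1-\alpha}=\frac{k_1}{k_1-1}$ matches the hypothesis on $p$. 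Since $b\ge 2$, Theorem~\ref{thm:main} applies: for large $n$, every extremal $G$ contains $K_{b-1,n-b+1}$ with parts $U$ and $X$, where $|U|=b-1$.

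Next we find which graphs on $X$ are allowed. As in the proof of Corollary~\ref{cor:L}, we have $\Delta(G[X])\le \ell(F)-1=1$, so $G[X]$ is a matching.

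In the case $k_i=2$ for all $i$ (so $F=tC_4$ and $b=2t$), we claim that $L(n,b-1,1)=K_{b-1}\vee M$, with $M$ a near perfect matching, is $F$-free. To see this, count how many cycles can avoid using at least two vertices of $U$. A $C_4$ meeting $U$ in at most one vertex must use two $X$-vertices adjacent to a common vertex, or an $X$-vertex with two $X$-neighbours; this is impossible, since a $C_4$ through at most one hub vertex needs a path of length $3$ in $G[X]$ or two $X$-vertices with a common $X$-neighbour. So every $C_4$ uses at least two vertices of $U$, and $t$ disjoint copies would need $2t>b-1$ of them. Corollary~\ref{cor:L} then gives $G\in\mathcal L(n,b-1,1)$, which is the single graph $L(n,b-1,1)$.

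In the case $k_t\ge 3$, $L(n,b-1,1)$ does contain $F$. Our plan is to show that a matching with two edges already creates $F$, while a single edge does not. For the embedding we route the longest cycle $C_{2k_t}$ through both matching edges $x_1x_2$ and $x_3x_4$: the path $x_1x_2\,u\,x_3x_4\,u'\ldots$ saves two hub vertices relative to the $k_t$ normally required. The remaining cycles then use $k_i$ hubs each (alternating hub and $X$ vertices), for a total of $b-2\le b-1$ hubs. Here $k_t\ge3$ is needed so that the cycle has room for two edges plus at least two hubs. We must also check that $E(n,b-1)$ is $F$-free: a cycle $C_{2k}$ using $j$ hubs passes through at most $j$ maximal $X$-segments, each of which is a single vertex or the edge. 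So $2k\le j+j+1$ when one segment is the edge, giving $j\ge k$ because $2k$ is even. Hence $F$ needs $\sum k_i=b>b-1$ hubs, and $E(n,b-1)$ is $F$-free.

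Finally we maximise. For $k_t\ge3$, $G[X]$ has at most one edge. Among all such graphs containing $K_{b-1,n-b+1}$, $e_p$ is strictly maximised when $U$ is a clique and $X$ contains exactly one edge, because adding edges strictly increases $e_p$. This gives $G=E(n,b-1)$. The step we expect to be the main obstacle is verifying the exact threshold $\ex(n,F)=O(n^{1+1/k_1})$ for disjoint unions of even cycles, together with the careful hub-counting argument. If the exponent bound is not readily citable, we would first try an argument that removes a copy of $C_{2k_1}$ greedily and applies Bondy--Simonovits to the remainder.
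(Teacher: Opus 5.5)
Your proposal is correct and follows essentially the same route as the paper: the greedy Bondy--Simonovits bound $\ex(n,F)=O(n^{1+1/k_1})$, then Theorem~\ref{thm:main}, then Corollary~\ref{cor:L} for $tC_4$, and for $k_t\ge 3$ the argument that two edges in $X$ create $F$ while $E(n,b-1)$ is $F$-free. One small counting slip: routing $C_{2k_t}$ through two $X$-edges saves only one hub, not two (for example $C_6=x_1x_2ux_3x_4u'$ uses $k_t-1=2$ hubs, as your own segment count $2k\le 2j+2$ also gives). So the embedding uses exactly $b-1=|U|$ hubs rather than $b-2$, which still suffices.
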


\begin{proof}
Firstly, a famous result of Bondy and Simonovits~\cite{BS1974} states that $\ex(n,C_{2k})=O\big(n^{1+\frac{1}{k}}\big)$. We use this to show that $\ex(n,F)=O\big(n^{1+\frac{1}{k_1}}\big)$. Suppose that $e(H)=\ex(n, F)\gg n^{1+\frac{1}{k_1}}$. Then, there exists a copy of $C_{2k_1}$ in $H$. Deleting all edges incident to the copy of $C_{2k_1}$ from $H$, we have deleted $O(n)$ edges, and thus we obtain a subgraph $H'$ of $H$ with $e(H')\gg n^{1+\frac{1}{k_1}}\ge n^{1+\frac{1}{k_2}}$. Then, there exists a copy of $C_{2k_2}$ in $H'$. Repeating this process $t$ times, since we delete $O(n)$ edges at each step, we can find a copy of $F$ in $H$, a contradiction.

Now, let $p>\frac{k_1}{k_1-1}$. We also have $\tau(F)=b\ge 2$ and $\ell(F)=2$. Choose $n_0=n_0(F,p)$ according to Theorem \ref{thm:main} and Corollary \ref{cor:L}, and let $n\ge n_0$. By Theorem \ref{thm:main}, $G$ contains $K_{b-1,n-b+1}$ as a subgraph. For the case when $k_t\ge 3$, let $U$ and $X$ be the sets of size $b-1$ and $n-b+1$. If $G[X]$ contains two edges, incident or independent, then we have a copy of $F$ in $G$. Indeed, we may let the copy of $C_{2k_t}$ contain two edges of $G[X]$, and any other cycle $C_{2k_i}$ contains $k_i$ vertices in both $U$ and $X$. Thus, $G$ is a subgraph of $E(n,b-1)$. Moreover, $E(n,b-1)$ is $F$-free, since if there is a copy of $F$, then each $C_{2k_i}$ must use at least $k_i$ vertices of $U$, so that $|U|\ge b$, a contradiction. Hence, we have $G=E(n,b-1)$. For the case when $k_i=2$ for all $i$, we may use the same argument to obtain that $L(n,b-1,1)$ is $F$-free. By Corollary \ref{cor:L}, we have $G=L(n,b-1,1)$. 
\end{proof}

\subsection{Discrete hypercubes}

To prove Theorem \ref{thm:hypercube}, we will prove the following result.
\begin{thm}\label{thm:cubehalf}
Let $d\ge 1$, and let $H$ be an induced subgraph of $Q_d$ on $2^{d-1}+1$ vertices. Then $H$ has a connected component on at least $d+1$ vertices.
\end{thm}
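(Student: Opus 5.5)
Suppose instead that every component of $H$ has at most $d$ vertices. The plan is induction on $d$, splitting $Q_d$ along one coordinate into two copies of $Q_{d-1}$. The statement itself is a vertex-isoperimetric claim: any set of more than half the vertices of $Q_d$ must span a large connected piece.

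The base case $d=1$ is immediate. Here $2^{0}+1=2$, so $H=Q_1=K_2$, which is connected on $2=d+1$ vertices.

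For the induction step with $d\ge 2$, let $S:=V(H)$ with $|S|=2^{d-1}+1$. Fix a coordinate $i$, and let $Q^0,Q^1$ be the two subcubes $\{x:x_i=0\}$ and $\{x:x_i=1\}$. Put $S_j:=S\cap V(Q^j)$. Since $|S_0|+|S_1|=2^{d-1}+1$, one side, say $S_0$, has $|S_0|\ge 2^{d-2}+1$.

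\begin{enumerate}
\item[(i)] Choose any subset of $S_0$ of size exactly $2^{d-2}+1$. The subgraph it induces in $Q^0\cong Q_{d-1}$ has a component on at least $d$ vertices by induction. Adding the remaining vertices of $S_0$ only merges components, so $H[S_0]$ has a component $C$ with $|C|\ge d$.
\item[(ii)] If some vertex of $S_1$ is adjacent (via coordinate $i$) to a vertex of $C$, or some vertex of $S_0\setminus C$ is adjacent to $C$, then $C$ extends to a component of $H$ with at least $d+1$ vertices, and we are done.
\item[(iii)] Otherwise the component of $H$ containing $C$ is exactly $C$. This forces the partners $C+e_i$ to lie in $Q^1\setminus S_1$, so $|S_1|\le 2^{d-1}-|C|$.
\end{enumerate}

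The main obstacle is making step (iii) contradictory. The plain induction loses information, so I would strengthen the hypothesis. The first idea is to average over coordinates: show that for each component $C$ of $H$ (each of size $\le d$), the vertex boundary of $C$ in $Q_d$ contains at least $|C|$ vertices outside $S$, with $\sum_C|\partial C|$ controlling $2^d-|S|$. A better route is a direct counting argument. A connected set $C$ in $Q_d$ of size $k\le d$ has vertex boundary $|\partial C|\ge k(d-k+1)$, since it spans at most $k-1$ edges and has edge boundary at least $kd-2(k-1)$, and each outside vertex receives at most $k$ of those edges. Boundaries of distinct components lie in $V(Q_d)\setminus S$, and each outside vertex is counted at most $d$ times. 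This gives the double count
\[
\sum_C \big(kd-2(k-1)\big)\le d\,(2^d-|S|)=d\,(2^{d-1}-1).
\]
The left side is at least $|S|(d-2)+2\cdot\#\text{components}$. Because each component has at most $d$ vertices, $\#\text{components}\ge |S|/d$, so the left side is at least $|S|\big(d-2+\frac{2}{d}\big)$. Comparing with $|S|=2^{d-1}+1$ gives a contradiction, since $(2^{d-1}+1)(d-2+\frac2d)>d(2^{d-1}-1)$ is what needs checking. I expect this inequality to fail for larger $d$, because the factor $d-2+\frac2d<d$. If so, I would sharpen the count by weighting each edge from $C$ to an outside vertex $y$ by $1/d_S(y)$, using that outside vertices with many neighbours in $S$ are rare. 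I would also fall back on the parity structure: every component must be bipartite and balanced-ish relative to the two colour classes, each of size $2^{d-1}$, so $S$, being of size $2^{d-1}+1$, cannot be an independent set. I would then iterate a compression (down-shift) argument, which preserves $|S|$ and does not increase the largest component size, to reduce to a down-set. For a down-set, containing more than half the cube forces some vertex of weight $\ge 2$ together with its whole lower shadow chain, and down-sets of size $>2^{d-1}$ contain a star $\{0, e_1,\dots,e_d\}$, which is a component of size $d+1$. Verifying that compression does not increase component size is the key step I would check first.
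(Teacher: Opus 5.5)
Your proposal is not yet a proof. The induction stalls at (iii), as you note yourself, and each of the other routes rests on a step that fails.

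\emph{The double count.} The bound $e(C)\le k-1$ is backwards: a connected $k$-set spans \emph{at least} $k-1$ edges, and in $Q_d$ it can span up to $\frac{k}{2}\log_2 k$ (a $4$-cycle already has $4>3$). More fundamentally, the number of edges leaving the components sums to exactly $d|S|-2e(S)$. So your inequality is equivalent to $e(S)\ge d$. A configuration in which $S$ splits into about $2^{d-2}$ components that are single edges satisfies this with room to spare, so edge counting alone cannot force a large component.

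\emph{The reweighting by $1/d_S(y)$.} This does not rescue the count by itself. The weighted sum equals exactly $|N(S)\setminus S|$. You would therefore need $|N(S)\setminus S|\ge |S|$ for every $S$ whose components all have at most $d$ vertices, and that is at least as strong as the theorem.

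\emph{Compression.} This fails at precisely the step you flagged: down-shifts increase induced edges and merge components. Take $S$ to be the even-weight vertices, so every component is a singleton. The down-shift in coordinate $1$ sends $S$ to the subcube $\{x:x_1=0\}$, which is a single component on $2^{d-1}$ vertices.

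\emph{Parity.} The parity remark only shows that $S$ is not independent, i.e.\ that some component has size $2$. Components also need not be balanced between the colour classes; a star $K_{1,d-1}$ can occur.

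The missing idea is the paper's spectral argument. It uses Huang's signed matrix $A_d$ in Sawin's form.
\begin{enumerate}
\item[(1)] For every $r$-dimensional subcube $U$ one has $A_d[U]^2=rI$. Hence the largest eigenvalue of $A_d[U]$ is $\sqrt r$, and $A_d$ itself has eigenvalue $\sqrt d$ with multiplicity $2^{d-1}$.
\item[(2)] A connected set of $c$ vertices lies in a subcube of dimension at most $c-1$, because only the coordinates flipped along a spanning tree vary.
\item[(3)] Suppose every component $C$ of $H$ had at most $d$ vertices. Interlacing inside the enclosing subcube bounds the largest eigenvalue of $A_d[C]$ by $\sqrt{d-1}$. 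Since $A_d[V(H)]$ is block diagonal over the components, its largest eigenvalue would be less than $\sqrt d$.
\item[(4)] On the other hand, interlacing the principal submatrix $A_d[V(H)]$ of order $2^{d-1}+1$ against $A_d$ shows its largest eigenvalue is at least the $2^{d-1}$-th largest eigenvalue of $A_d$, namely $\sqrt d$. This is a contradiction.
\end{enumerate}
The signing turns ``all components are small'' into a uniform spectral bound. Your counting and compression arguments do not capture this information.
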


Theorem \ref{thm:hypercube} follows from Corollary \ref{cor:L} and Theorem \ref{thm:cubehalf}.

\begin{proof}[Proof of Theorem \ref{thm:hypercube}]
(a) We have $\tau(Q_3)=4$ and $\ell(Q_3)=3$. Recall that $\ex(n,Q_3)=O(n^{8/5})$ from \cite{ES1969}. Thus, for $p>\frac{5}{2}$, choose $n_0=n_0(p)$ according to Corollary \ref{cor:L}, and let $n\ge n_0$. Note that the graphs of $\mathcal L(n,3,2)$ are of the form $K_3\vee L$, where $L$ is a graph on $n-3$ vertices consisting of a disjoint union of cycles. We first show that a graph $H:= K_3\vee L\in \mathcal L(n,3,2)$ is $Q_3$-free if and only if $L$ is a disjoint union of copies of $C_3$ and $C_4$. Let $U$ be the vertex set of the $K_3$.

Suppose first that $H$ contains a subgraph $Q$ which is a copy of $Q_3$. If the connected components of $L$ can only be  $C_3$ or $C_4$, then since $|V(Q)\setminus U|\ge 5$, we see that $V(Q)\cap U$ must be a cut-set of $Q$. Since $Q$ is $3$-connected, every cut-set of $Q$ must have at least $3$ vertices. Moreover, it is easy to show that all cut-sets of $Q$ of size $3$ must be the neighbourhood of a vertex in $Q$. Since $|V(Q)\cap U|\le |U|=3$, we have $U$ itself is a cut-set of $Q$, and  $U=N_Q(v)$ for some $v\in V(Q)$. But then, deleting $U$ from the $Q$ gives a copy of $S_3$ in $L$, which contradicts that $\Delta(L)\le 2$.

Conversely, suppose that $L$ contains a cycle $C_k$ for some $k\ge 5$. Let $U=\{y_1,y_2,z_3\}$, and $y_3y_4z_4z_1z_2$ be a path of length $4$ in $L$. Then $y_1y_2y_3y_4y_1$ and $z_1z_2z_3z_4z_1$ form two disjoint copies of $C_4$ in $H$, and $y_iz_i\in E(H)$ for $1\le i\le 4$. Hence, $H$ contains a copy of $Q_3$.
 
Now, it is clear that for every $n\ge 9$, there exists a graph $L$ on $n-3$ vertices which is a disjoint union of copies of $C_3$ and $C_4$. For such $L$, we have $K_3\vee L$ is $Q_3$-free from above. Hence by Corollary \ref{cor:L}, if $G$ is a $Q_3$-free graph on $n\ge n_0$ vertices with $e_p(G)=\ex_p(n,Q_3)$, then we must necessarily have $G\in \mathcal L(n,3,2)$. Moreover,  we have $G=K_3\vee L$ where $L$ is a disjoint union of copies of $C_3$ and $C_4$.\\[0.5ex]
\indent(b) We have $\tau(Q_d)=2^{d-1}$ and $\ell(Q_d)=d$. Recall that $\ex(n,Q_d)=O_d(n^{1+\alpha_d})$ where $\alpha_d:=1-\frac{1}{d-1}+\frac{1}{(d-1)2^{d-1}}$ from \cite{JS2024}. Thus, for $p>\frac{1}{1-\alpha_d}$, choose $n_0=n_0(d,p)$ according to Corollary \ref{cor:L}. For $n\ge n_0$ and $n\equiv (2^{d-1}-1)$ (mod $d)$, consider the graph $H:=K_{2^{d-1}-1}\vee mK_d\in \mathcal L(n,2^{d-1}-1,d-1)$ for some $m$. If $H$ contains a copy of $Q_d$, then there is a set $S$ of $2^{d-1}+1$ vertices of the $Q_d$ in $mK_d$, and the induced subgraph of the $Q_d$ by $S$ cannot have a connected component with more than $d$ vertices. This is a contradiction to Theorem \ref{thm:cubehalf}. Hence, $H$ is $Q_d$-free. By Corollary \ref{cor:L}, if $G$ is a $Q_d$-free graph on $n$ vertices with $e_p(G) = \ex_p(n, Q_d)$, then $G\in\mathcal L(n, 2^{d-1} - 1,d -1)$.
\end{proof}

We now focus on the proof of Theorem \ref{thm:cubehalf}, for which we shall use an algebraic approach. Let $M$ be a square real matrix. A \emph{principal submatrix} of $M$ is a submatrix of $M$ obtained by deleting the same set of rows and columns. If all eigenvalues of $M$ are real, let $\lambda_1(M)$ denote the largest eigenvalue of $M$. In particular, if $M$ is a symmetric matrix, then all eigenvalues of $M$ are real. We will need the following result.

\begin{thm}[Cauchy's interlace theorem]\label{thm:CIT}
Let $A$ be a symmetric $n\times n$ real matrix, and let $B$ be an $m\times m$ principal submatrix of $A$ for some $m \le n$. If the eigenvalues of $A$ are $\lambda_1\ge\lambda_2\ge\cdots\ge\lambda_n$, and the eigenvalues of $B$ are $\mu_1\ge\mu_2\ge\cdots\ge\mu_m$, then for all $1\le i\le m$,\\[-1.2ex]
\[
\lambda_i\ge\mu_i\ge\lambda_{i+n-m}.
\]
\end{thm}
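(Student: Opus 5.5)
Since Theorem~\ref{thm:CIT} is a classical fact, I would prove it directly from the Courant--Fischer min-max characterization of eigenvalues rather than from anything earlier in the paper. First I reduce to a convenient position. If $B$ is obtained by deleting the rows and columns indexed by a set $D$, let $P$ be a permutation matrix moving the retained indices to the first $m$ positions. Then $P^{T}AP$ is symmetric with the same eigenvalues as $A$, and $B$ is its leading $m\times m$ principal submatrix. So I may assume $B$ is the top-left block of $A$. Let $J:\mathbb R^m\to\mathbb R^n$ be the zero-padding embedding $y\mapsto (y,0)$. The key identity is
\[
(Jy)^{T}A(Jy)=y^{T}By, \qquad \|Jy\|=\|y\|,
\]
so Rayleigh quotients of $B$ are exactly Rayleigh quotients of $A$ restricted to the subspace $J(\mathbb R^m)$.

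Next I establish Courant--Fischer for a symmetric $n\times n$ matrix $A$ with orthonormal eigenvectors $v_1,\ldots,v_n$ for $\lambda_1\ge\cdots\ge\lambda_n$:
\[
\lambda_i=\max_{\dim V=i}\ \min_{0\ne x\in V}\frac{x^{T}Ax}{x^{T}x}=\min_{\dim V=n-i+1}\ \max_{0\ne x\in V}\frac{x^{T}Ax}{x^{T}x}.
\]
For the max-min form, taking $V=\mathrm{span}(v_1,\ldots,v_i)$ gives $\ge$. For $\le$, any $i$-dimensional $V$ meets $\mathrm{span}(v_i,\ldots,v_n)$ nontrivially by dimension count, and on that span the Rayleigh quotient is at most $\lambda_i$. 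The min-max form follows symmetrically, or by applying the max-min form to $-A$. This dimension-counting step is the only real content, and I expect it to be the main (though standard) obstacle.

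With this in hand, both inequalities are short. For $\lambda_i\ge\mu_i$, pick an $i$-dimensional $W\subseteq\mathbb R^m$ attaining the max-min value $\mu_i$ for $B$. Then $J(W)$ is $i$-dimensional in $\mathbb R^n$ with the same minimum Rayleigh quotient, so $\lambda_i\ge\mu_i$. For $\mu_i\ge\lambda_{i+n-m}$, pick an $(m-i+1)$-dimensional $W$ attaining the min-max value $\mu_i$ for $B$. Then $J(W)$ has dimension $m-i+1=n-(i+n-m)+1$, so the min-max formula for $A$ at index $i+n-m$ gives
\[
\lambda_{i+n-m}\le\max_{0\ne x\in J(W)}\frac{x^{T}Ax}{x^{T}x}=\mu_i.
\]
Together these give $\lambda_i\ge\mu_i\ge\lambda_{i+n-m}$ for all $1\le i\le m$, as required.
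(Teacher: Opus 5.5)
Your proof is correct. The permutation reduction, the identity $(Jy)^{T}A(Jy)=y^{T}By$ with $J$ injective and norm-preserving, the dimension-count proof of Courant--Fischer, and the index bookkeeping $m-i+1=n-(i+n-m)+1$ are all sound. The extremal subspaces you pick exist: they are the spans of the top $i$, respectively bottom $m-i+1$, eigenvectors of $B$.

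The paper gives no proof of Theorem~\ref{thm:CIT} and only refers to \cite{F2005}. The short proof cited there works with characteristic polynomials rather than Rayleigh quotients. It treats the case where $B$ is obtained by deleting a single row and column, showing that the roots of $\det(xI-B)$ interlace those of $\det(xI-A)$. The general statement then follows by deleting rows and columns one at a time, since $k$ successive interlacings give $\lambda_i\ge\mu_i\ge\lambda_{i+k}$.

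Your variational route handles every $m\le n$ in one step, with no induction on the number of deleted rows. It also proves more: you only use that $J$ is an isometric embedding, so the same argument gives interlacing for any compression $V^{T}AV$ with $V^{T}V=I_m$ (Poincar\'e separation). The polynomial route avoids subspaces and min-max altogether, and is very short once the relevant facts about real-rooted polynomials are granted.

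For the paper's purposes either proof is more than enough. In the proof of Theorem~\ref{thm:cubehalf} only the case $i=1$ of both inequalities is used, namely $\lambda_1(B)\le\lambda_1(A)$ and $\lambda_1(B)\ge\lambda_{1+n-m}(A)$. Both follow immediately from your Rayleigh-quotient setup.
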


A short proof of Cauchy's interlace theorem can be found in \cite{F2005}.

Recently, Huang~\cite{H2019} settled the \emph{sensitivity conjecture}. The main result in his paper states that if $H$ is an induced subgraph of $Q_d$ on $2^{d-1} +1$ vertices, then the maximum degree of $H$ satisfies $\Delta(H)\ge\sqrt{d}$. One key ingredient in his proof is the construction of a $(-1,0,1)$-matrix $A_d$ such that by changing every $-1$ entry to $1$, the resulting matrix becomes the adjacency matrix of $Q_d$. He showed that the matrix $A_d$ satisfies $A_d^2=dI$, from which it follows that the eigenvalues of $A_d$ are $\sqrt{d}$ and $-\sqrt{d}$, both with multiplicity $2^{d-1}$. His result then follows from an application of Cauchy's interlace theorem.

We follow a similar approach. Huang defined the matrix $A_d$ iteratively. Here, we use a global definition of $A_d$, which was suggested by  Sawin~\cite{S2019}. For $x=(x_1,\dots,x_d)\in V(Q_d)$ and $1\le i\le d$, let $x^{(i)}$ denote the neighbour of $x$ obtained by flipping the $i^{\textup{th}}$ coordinate of $x$ from 
$x_i$ to $1-x_i$. Define the $2^d\times 2^d$ matrix $A_d$, indexed by $V(Q_d)$, as follows.
\[
(A_d)_{xy}:=
\left\{
\begin{array}{ll}
0, & \textup{if $xy\not\in E(Q_d)$;}\\
(-1)^{x_1+x_2+\cdots+x_{i-1}}, & \textup{if $xy\in E(Q_d)$ and $y=x^{(i)}$.}
\end{array}
\right.
\]
Note that when $i=1$, the empty sum $x_1+x_2+\cdots+x_{i-1}$ is defined to be $0$. Also, $A_d$ is a symmetric matrix, since flipping the $i^{\textup{th}}$ coordinate does not change $x_1+x_2+\cdots+x_{i-1}$. It is clear that by changing every $-1$ entry of $A_d$ to $1$, we obtain the adjacency matrix of $Q_d$. Using this definition of $A_d$, we have the following lemma which implies $A_d^2=dI$. Given a subset $U$ of $V(Q_d)$, let $A_d[U]$ denote the principal submatrix of $A_d$ induced by $U$, i.e., $A_d[U]$ is obtained from $A_d$ by deleting the rows and columns of $A_d$ that correspond to the vertices of $Q_d$ not in $U$.
\begin{lemma}\label{lem:square}
Let $1\le r\le d$. Let $U\subseteq V(Q_d)$ be a vertex subset of size $2^r$ such that $Q_d[U]$ is isomorphic to $Q_r$. Then, we have $A_d[U]^2=rI$. Consequently, all eigenvalues of $A_d[U]$ are equal to either $\sqrt r$ or $-\sqrt r$, and each of these two eigenvalues has multiplicity $2^{r-1}$.
\end{lemma}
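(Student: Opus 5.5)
Since $Q_d[U]\cong Q_r$, there is a set $J\subseteq\{1,\dots,d\}$ of $r$ ``free'' coordinates such that every $x\in U$ has the same values on the fixed coordinates $[d]\setminus J$ and arbitrary values on $J$. An induced subgraph of $Q_d$ isomorphic to $Q_r$ is always such a subcube. This needs only a short justification: $|U|=2^r$, $Q_r$ is connected and $r$-regular, and the set of coordinates flipped along edges of $Q_d[U]$ spans a subcube containing $U$ of dimension equal to the number of such coordinates. If one prefers to avoid this, the lemma can be read with this subcube description as the definition of $U$, which is all that is used later. The entries of $A_d[U]$ are then $(A_d)_{xy}$ for $x,y\in U$, and these are nonzero exactly when $y=x^{(i)}$ with $i\in J$.

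The plan is to compute $(A_d[U]^2)_{xz}=\sum_{y\in U}(A_d)_{xy}(A_d)_{yz}$ directly. Write $\sigma_i(x):=(-1)^{x_1+\cdots+x_{i-1}}$, so that $(A_d)_{x,x^{(i)}}=\sigma_i(x)=\sigma_i(x^{(i)})$.

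\textbf{Diagonal entries.} For $z=x$, each $i\in J$ contributes $\sigma_i(x)^2=1$, so the diagonal entry is $|J|=r$.

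\textbf{Entries at distance two.} A nonzero off-diagonal entry requires $z=x^{(i)(j)}$ with distinct $i,j\in J$. There are then exactly two intermediate vertices in $U$, namely $x^{(i)}$ and $x^{(j)}$. Assume $i<j$. The first path contributes $\sigma_i(x)\,\sigma_j(x^{(i)})$, and since flipping coordinate $i<j$ changes the parity of $x_1+\cdots+x_{j-1}$, we have $\sigma_j(x^{(i)})=-\sigma_j(x)$. The second path contributes $\sigma_j(x)\,\sigma_i(x^{(j)})=\sigma_j(x)\,\sigma_i(x)$, because coordinate $j$ does not affect $\sigma_i$. The two contributions cancel.

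\textbf{Remaining entries.} All other off-diagonal entries are zero, since $z$ is then at distance other than $0$ or $2$ from $x$ within the subcube.

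This gives $A_d[U]^2=rI$. Since $A_d[U]$ is real symmetric, each of its eigenvalues $\lambda$ satisfies $\lambda^2=r$, so $\lambda=\pm\sqrt r$. For the multiplicities, note that $\operatorname{tr}A_d[U]=0$ because the diagonal of $A_d$ is zero. Hence the numbers of eigenvalues equal to $\sqrt r$ and to $-\sqrt r$ coincide, and each is $2^{r-1}$.

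The main obstacle is the sign cancellation at distance two. It rests on the asymmetric choice of sign $\sigma_i$, which depends only on coordinates before $i$, so the order $i<j$ must be handled carefully.
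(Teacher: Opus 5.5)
Your matrix computation is the same as the paper's. The diagonal counts the $r$ closed walks $x\to x^{(i)}\to x$. The two paths between vertices differing in coordinates $i<j$ cancel, because flipping $x_i$ changes the parity of $x_1+\cdots+x_{j-1}$ but not of $x_1+\cdots+x_{i-1}$. Finally, $\mathrm{tr}\,A_d[U]=0$ gives the multiplicities.

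Where you depart from the paper is the preliminary reduction to a subcube, and what you write does not justify it. Connectivity of $Q_d[U]$ only shows that $U$ lies in the subcube through a fixed $x\in U$ whose free coordinates are the set $J$ of coordinates flipped along edges of $Q_d[U]$. That gives $|J|\ge r$, but nothing in your sketch gives $|J|\le r$. Connectivity and regularity do not bound $|J|$ by the degree, since different vertices may flip different coordinates: the induced $6$-cycle $000,100,110,111,011,001$ in $Q_3$ is connected and $2$-regular but flips all three coordinates. The missing ingredient is a use of the $4$-cycles of $Q_r$. Every $4$-cycle of $Q_d$ has the form $x,x^{(a)},x^{(a)(b)},x^{(b)}$, so opposite edges of a $4$-cycle flip the same coordinate. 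Any two parallel edges of $Q_r$ are linked by a sequence of parallel edges, each opposite to the next in some $4$-cycle. Hence each of the $r$ parallel classes of $Q_r$ is sent to edges flipping a single coordinate, so $|J|\le r$, and $2^r=|U|\le 2^{|J|}\le 2^r$ forces $U$ to be the whole subcube.

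Alternatively, you can drop the reduction, as the paper does. The $r$-regularity of $Q_r$ gives the diagonal entry directly. If distinct $x,y\in U$ have a common neighbour in $U$, they are at distance $2$ in $Q_d[U]\cong Q_r$, where such a pair has exactly two common neighbours. Since $x^{(i)}$ and $x^{(j)}$ are the only common neighbours of $x,y$ in $Q_d$, both lie in $U$. The paper asserts ``Both paths are contained in $Q_d[U]$'' without comment, so this is the step both arguments need.

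Your fallback of reading the hypothesis as ``$U$ is a subcube'' is harmless for the paper. The lemma is only applied to $U=V(Q_d)$ and to the subcube produced by Lemma~\ref{lem:subcube}. Still, that reading proves a weaker statement than the one stated.
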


\begin{proof}
Fix vertices $x,y\in U$, where $x=y$ is possible. The $(x,y)$-entry of $A_d[U]^2$ is
\begin{equation}
(A_d[U]^2)_{xy}=\sum_{z\in U}A_d[U]_{xz}A_d[U]_{zy}.\label{A2sum}
\end{equation}
The sum (\ref{A2sum}) is over all walks of length $2$ from $x$ to $y$ in $Q_d[U]$.

If there is no walk of length $2$ from $x$ to $y$ in $Q_d[U]$, then \\[-1.2ex]
\[
(A_d[U]^2)_{xy}=0.
\]

If $x=y$, then the walks $x\to z\to x$ of length $2$ from $x$ back to itself in $Q_d[U]$ are obtained by flipping one coordinate and then flipping it back. Since $x$ has $r$ neighbours in $Q_d[U]$, there are $r$ such walks, and each contributes $1$ to the sum (\ref{A2sum}). Hence\\[-1.2ex]
\[
(A_d[U]^2)_{xx}=r.
\]

If $x\neq y$, and there is a walk of length $2$ from $x$ to $y$ in $Q_d[U]$, then $x$ and $y$ differ in exactly two coordinates. Suppose these two coordinates are $i<j$. Then there are exactly two paths of length $2$ from $x$ to $y$:\\[-1.2ex]
\[
x\to x^{(i)}\to y \qquad \textrm{and}
\qquad
x\to x^{(j)}\to y.
\]
Both paths are contained in $Q_d[U]$. The term in the sum (\ref{A2sum}) along the first path is
\[
(-1)^{x_1+\cdots+x_{i-1}}(-1)^{x_1+\cdots+x_{j-1}+1},
\]
where the extra $+1$ appears because the first flip changes coordinate $i$, and $i<j$. The term in the sum (\ref{A2sum}) along the second path is
\[
(-1)^{x_1+\cdots+x_{j-1}}(-1)^{x_1+\cdots+x_{i-1}}.
\]
These two terms are negatives of each other, so together they contribute $0$ to the sum (\ref{A2sum}). Hence,\\[-1.2ex]
\[
(A_d[U]^2)_{xy}=0.
\]
Therefore, we have $A_d[U]^2=rI$. This means that every eigenvalue of $A_d[U]$ is either $\sqrt r$ or $-\sqrt r$. Since the diagonal entries of $A_d[U]$ are all $0$, the trace of $A_d[U]$ is $0$, and hence both $\sqrt r$ and $-\sqrt r$ have multiplicity $2^{r-1}$.
\end{proof}

Next, we have the following lemma about connected induced subgraphs of $Q_d$.
\begin{lemma}\label{lem:subcube}
Let $C\subseteq V(Q_d)$ be a subset of size $c$ such that the induced subgraph $Q_d[C]$ is connected. Then $C$ is contained in an $r$-dimensional subcube of $Q_d$ with
$r\le c-1$.
\end{lemma}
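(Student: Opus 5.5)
We argue via the set of coordinates that vary inside $C$. Define $D\subseteq\{1,\dots,d\}$ to be the set of coordinates $i$ such that there exist $x,y\in C$ with $x_i\neq y_i$. Fix any $x^\ast\in C$. Then every $z\in C$ agrees with $x^\ast$ on all coordinates outside $D$. Hence $C$ is contained in the subcube
\[
\{z\in V(Q_d): z_j=x^\ast_j \text{ for all } j\notin D\},
\]
which is an $r$-dimensional subcube with $r=|D|$. So it suffices to show that $|D|\le c-1$.

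To bound $|D|$, use connectivity. Since $Q_d[C]$ is connected on $c$ vertices, it has a spanning tree $T$ with exactly $c-1$ edges. Each edge $zz'$ of $T$ is an edge of $Q_d$, so $z'=z^{(i)}$ for a unique coordinate $i$. Label that edge by $i$; this gives at most $c-1$ distinct labels.

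Next we show every $i\in D$ appears as a label. Take $x,y\in C$ with $x_i\neq y_i$, and consider the unique path from $x$ to $y$ in $T$. Walking along it, each step flips exactly its labelled coordinate. If no edge on the path were labelled $i$, coordinate $i$ would never change, forcing $x_i=y_i$, a contradiction. So $D$ is contained in the set of labels, and $|D|\le c-1$, as required.

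The argument is elementary, and the only point needing care is that $C$ lies in the subcube fixed by the coordinates outside $D$. This holds by the definition of $D$. The edge case $c=1$ gives $D=\emptyset$ and $r=0\le c-1$, consistent with the statement if a $0$-dimensional subcube is allowed as a single vertex. If $r\ge1$ is needed, as in Lemma~\ref{lem:square}, we may enlarge $D$ by arbitrary coordinates up to size $c-1$, provided $c-1\le d$. That is automatic when $c\ge 2$, since then $|D|\ge1$.
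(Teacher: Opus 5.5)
Your proof is correct and is essentially the paper's argument: both take a spanning tree $T$ of $Q_d[C]$, label its $c-1$ edges by their flipped coordinates, and observe that vertices of $C$ can differ only in labelled coordinates. The one difference is that the paper uses the label set $J$ directly as the free coordinates, whereas you first define the varying set $D$ and then show $D\subseteq J$.

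Your closing remark about enlarging $D$ is unnecessary. For $c\ge 2$ you already have $|D|\ge 1$, and the paper handles $r=0$ separately in the proof of Theorem~\ref{thm:cubehalf}.
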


\begin{proof}
Let $T$ be a spanning tree of the connected subgraph $Q_d[C]$. Then $T$ has $c-1$ edges, and each edge of $T$ corresponds to flipping one coordinate of $Q_d$. Let $J$ be the set of such coordinates. Then $|J|\le c-1$. Pick an arbitrary vertex $v\in C$. Then all vertices of $C$ can be reached from $v$ by a path in $T$, and along such paths only coordinates in $J$ may change. Hence, all vertices of $C$ agree outside the coordinates in $J$. Therefore $C$ lies in a hypercube with dimension $|J|\le c-1$.
\end{proof}

We are now ready to prove Theorem~\ref{thm:cubehalf}.
\begin{proof}[Proof of Theorem~\ref{thm:cubehalf}]
Let $W=V(H)$, so that $H=Q_d[W]$. Suppose that every connected component of $H$ has at most $d$ vertices. Let the vertex sets of these components be $C_1,\dots,C_t$. Then the matrix $A_d[W]$ is block diagonal with blocks $A_d[C_1],\dots,A_d[C_t]$. Fix one $C_i$, and write $|C_i|=c\le d$. By Lemma~\ref{lem:subcube}, the set $C_i$ is contained in an $r$-dimensional subcube of $Q_d$ with $r\le c-1\le d-1$. Let $U$ denote the vertex set of this $r$-dimensional subcube. If $r=0$, then $\lambda_1(A_d[U])=0=\sqrt r$. If $r\ge 1$, then by Lemma~\ref{lem:square}, we have $\lambda_{1}(A_d[U])=\sqrt r$. Since $C_i\subseteq U$, we have $A_d[C_i]$ is a principal submatrix of $A_d[U]$. By Cauchy's interlace theorem, we have\\[-1.2ex]
\[
\lambda_{1}(A_d[C_i])\le \lambda_{1}(A_d[U])= \sqrt r\le \sqrt{d-1}<\sqrt d.
\]
Since\, $A_d[W]$\, is\, block\, diagonal,\, its\, largest\, eigenvalue\, is\, the\, maximum\, of\, the\, largest eigenvalues of its blocks. Hence,\\[-1.2ex]
\begin{equation}
\lambda_{1}(A_d[W])=\max_{1\le i\le t} \lambda_{1}(A_d[C_i]) < \sqrt d.\label{Th41eq1}
\end{equation}

On the other hand, let $k=2^d$ and $m=2^{d-1}+1=|W|$. If the eigenvalues of $A_d$ in non-increasing order are $\mu_1\ge\mu_2\ge\cdots\ge\mu_k$, then by Lemma \ref{lem:square}, we have $\mu_{k/2}=\sqrt{d}$. Since $A_d[W]$ is a principal submatrix of $A_d$, by Cauchy's interlace theorem, we have
\begin{equation}
\lambda_{1}(A_d[W])\ge \mu_{1+k-m}=\mu_{k/2}=\sqrt{d}.\label{Th41eq2}
\end{equation}

Inequalities (\ref{Th41eq1}) and (\ref{Th41eq2}) contradict each other. We conclude that $H$ must have a connected component on at least $d+1$ vertices.
\end{proof}

\subsection{Caterpillar forests and spider forests}

Let $s\ge 1$ and $\alpha_1,\alpha_2,\dots,\alpha_s\ge 0$. The \emph{caterpillar} $C(\alpha_1,\alpha_2,\dots,\alpha_s)$ is the graph formed by taking a path $P_s=v_1v_2\cdots v_s$, called the \emph{spine} of $C(\alpha_1,\alpha_2,\dots,\alpha_s)$, and attaching $\alpha_i$ pendant edges to $v_i$, for all $1\le i\le s$. Note that stars ($s=1$), paths ($\alpha_i=0$ for all $1\le i\le s$), and brooms ($\alpha_i=0$ for all $2\le i\le s$) are special cases of a caterpillar. Further special cases are \emph{regular caterpillars} ($\alpha_1=\cdots=\alpha_s=\beta$), \emph{double stars} ($s=2$), and \emph{double brooms} ($\alpha_i=0$ for all $2\le i\le s-1$). Write $C(s;\beta)$ for the regular caterpillar with $\alpha_1=\cdots=\alpha_s=\beta$, and $C(s;\beta,\gamma)$ for the double broom with $\alpha_1=\beta$ and $\alpha_s=\gamma$. We are interested in the determination of $\ex_p(n,F)$ when $F$ is a caterpillar forest. Here, we consider the case when no component of $F$ is a star. We have the following result.

\begin{thm}\label{thm:caterpillar}
Let $t\ge 1$, and $F=\bigcup_{j=1}^t F_j$ be a forest where $F_j$ is a caterpillar for $1\le j\le t$. Let $p>1$.
\begin{enumerate}
\item[(a)] Suppose that every $F_j$ is a regular caterpillar, with $F_j=C(s_j;\beta_j)$ where $s_j\ge 2$ and $\beta_j\ge 1$, for $1\le j\le t$. Then there exists $n_0=n_0(F,p)$ such that, if  $G$ is an $F$-free graph on $n\ge n_0$ vertices with $e_p(G)=\ex_p(n, F)$, then
\begin{equation}\label{eq:caterforest1}
G=
\begin{cases}
L(n,s-1,0), &\textup{\emph{if $\beta_j=1$ for all $j$;}}\\
K_{\tau(F)-1, n-\tau(F)+1}, &\textup{\emph{if $\beta_{j_1}\geq 2$ and $s_{j_1}$ is even for some $j_1$;}}\\
M_{\tau(F)-1}\vee E_{n-\tau(F)+1}, &\textup{\emph{if $\beta_j=1$ when $s_j\neq 3$, and}}\\
&\quad\quad\textup{\emph{$\beta_{j_1}\ge 2$, $s_{j_1}=3$ for some $j_1$;}}\\
E^+_{\tau(F)-1}\vee E_{n-\tau(F)+1}, &\textup{\emph{if $\beta_j=1$ when $s_j$ is even, and}}\\
&\quad\quad\textup{\emph{$\beta_{j_1}\ge 2$, $s_{j_1}\ge 5$ is odd for some $j_1$;}}
\end{cases}
\end{equation}
where $s:=\sum_{j=1}^t s_j$, and $\tau(F)$ is given by $\tau(F)=\sum_{j=1}^t\big(\big\lfloor\frac{s_j}{2}\big\rfloor\beta_j+\big\lceil\frac{s_j}{2}\big\rceil\big)$.
\item[(b)] Suppose that every $F_j$ is a broom, a double star, or a double broom, with $F_j=C(s_j;\beta_j,\gamma_j)$ where $s_j\ge 2$ and $1\leq \beta_j\leq \gamma_j$, for $1\le j\le t$. Then there exists $n_0=n_0(F,p)$ such that, if  $G$ is an $F$-free graph on $n\ge n_0$ vertices with $e_p(G)=\ex_p(n, F)$, then
\begin{equation}\label{eq:caterforest2}
G
\begin{cases}
=K_{\tau(F)-1, n-\tau(F)+1}, &\textup{\emph{if $\beta_{j_1}\ge 2$ and $s_{j_1}$ is even for some $j_1$;}}\\
=L(n,\tau(F)-1,0), &\textup{\emph{if $\beta_j=1$ when $s_j$ is even, and}}\\
&\quad\quad\textup{\emph{$s_{j_1}$ is even for some $j_1$;}}\\
\in \mathcal L(n,2t-1,\beta), &\textup{\emph{if $s_j= 3$ and $1\le \beta_j<\gamma_j$ for all $j$;}}\\
=L(n,\tau(F)-1,1), &\textup{\emph{if $s_j\in\{3,5\}$ for all $j$, $1\le \beta_j<\gamma_j$ when $s_j=3$,}}\\
&\quad\quad\textup{\emph{$\beta_j\ge 2$ when $s_j=5$, and $s_{j_1}=5$ for some $j_1$;}}\\
=E(n,\tau(F)-1), &\textup{\emph{if $s_j$ is odd for all $j$, and $\beta_{j_1}=\gamma_{j_1}=1$, $s_{j_1}=3$,}}\\
&\quad\quad\textup{\emph{or $\beta_{j_1}=1$, $s_{j_1}=5$, or $s_{j_1}\ge 7$, for some $j_1$;}}
\end{cases}
\end{equation}
where $s:=\sum_{j=1}^t s_j$, $\beta:=\min_{1\le j\le t}\beta_j$, and $\tau(F)$ is given by
\begin{equation}
\tau(F)=\sum_{j:\,s_j\textup{ is even}}\Big(\frac{s_j}{2}+\beta_j\Big)+\sum_{j:\,s_j\textup{ is odd}}\Big\lceil\frac{s_j}{2}\Big\rceil.\label{eq:caterbtauF}
\end{equation}
Moreover, for the case $s_j= 3$ and $1\le \beta_j<\gamma_j$ for all $j$, every graph of $\mathcal L(n,2t-1,\beta)$ is an extremal graph for $\ex_p(n,F)$.
\end{enumerate}
\end{thm}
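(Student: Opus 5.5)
The plan is to run the same template as Theorems \ref{thm:ec} and \ref{thm:hypercube}. Since $F$ is a forest, $\ex(n,F)=O(n)$, so we may take $\alpha=0$, and any $p>1$ satisfies $p>\frac{1}{1-\alpha}$. One checks $\tau(F)\ge 2$ in every case because no component is a star. Theorem \ref{thm:main} then gives, for $n\ge n_0$, a partition $V(G)=U\cup X$ with $|U|=\tau(F)-1$, $|X|=n-\tau(F)+1$, and all $U$--$X$ edges present. The remaining work is to pin down $G[U]$ and $G[X]$ and compare $e_p$ values. Since $|X|$ is huge, adding an edge inside $U$ raises $e_p$ by about $2pn^{p-1}$, whereas an edge inside $X$ raises it by only $O(1)$. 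So the extremal graph first maximises edges inside $U$ and then, subject to that, edges inside $X$. Because $\Delta(G[X])\le \ell(F)-1$ and $e(G[U])\le\binom{|U|}{2}$, this lexicographic comparison is rigorous once $n$ is large.

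\textbf{Step 1: compute $\tau(F)$ and $\ell(F)$ for each component.} For a caterpillar, an independent vertex cover must include, at each spine vertex $v_i$, either $v_i$ or all of its $\alpha_i$ leaves. On the path, the choice must alternate, so there are essentially two covers, by parity class. For $C(s_j;\beta_j)$ the cheaper class gives $\lfloor s_j/2\rfloor\beta_j+\lceil s_j/2\rceil$. For double brooms, the cost depends on whether the ends have the same parity: for odd $s_j$ we can take the odd spine vertices, giving $\lceil s_j/2\rceil$; for even $s_j$ one end must be replaced by its $\beta_j$ leaves, giving $s_j/2+\beta_j$. This yields \eqref{eq:caterbtauF}. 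The minimum-cover vertex degrees give $\ell$. In particular $\ell(F)=1$ occurs exactly when some minimum cover contains a vertex of degree one, which is what forces $G[X]$ to be empty (the $K_{\tau(F)-1,n-\tau(F)+1}$ case).

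\textbf{Step 2: the structural dichotomy.} In each case of \eqref{eq:caterforest1} and \eqref{eq:caterforest2}, we determine:
\begin{enumerate}
\item[(i)] which graphs on $U$ can be added to $K_{|U|,|X|}$ without creating $F$;
\item[(ii)] given the best choice in (i), the largest admissible $G[X]$.
\end{enumerate}
The embedding lemma is that $F$ embeds in $G$ iff the components can be placed disjointly so that each $F_j$ uses few $U$-vertices. A component $F_j$ embeds using $k$ vertices of $U$ if $F_j$ minus some $k$ vertices leaves a graph whose pieces fit in $G[X]$ (stars of size $\le \Delta(G[X])$, or single edges) and whose $U$-images respect the edges of $G[U]$. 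An edge inside $U$ lets two adjacent spine vertices both sit in $U$. This saves a vertex of $U$ exactly when a cheaper cover of $F_j$ minus an edge exists, which happens for even $s_j$ with $\beta_j\ge2$, or $s_j=3$. The resulting outcomes are: $G[U]$ complete (the $L(n,s-1,0)$ and $L(n,\tau(F)-1,\cdot)$ cases), a perfect matching $M_{\tau(F)-1}$, a single edge $E^+$, or empty. Similarly, an edge or star in $X$ lets a leaf-heavy end live in $X$. Checking that the graph proposed for each case is $F$-free uses the counting from Theorem \ref{thm:ec}: each $F_j$ still needs at least its full share of $U$, and these shares sum to $\tau(F)>|U|$. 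Conversely, adding any further edge yields an explicit embedding of $F$ in which one component exploits the extra edge. Uniqueness then follows from the lexicographic comparison, or from Corollary \ref{cor:L} in the near-regular cases. This includes the case $\mathcal L(n,2t-1,\beta)$, where every member is $F$-free because each $C(3;\beta_j,\gamma_j)$ needs either $2$ hub vertices or a star $S_{\beta_j+1}$ inside $X$.

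\textbf{Main obstacle.} The hardest step is the case analysis in Step 2, particularly showing the converse, that no denser $G[U]$ (for example two disjoint edges, or a path of length two, in the $E^+$ case with $s_{j_1}\ge5$ odd) avoids $F$. The multi-component interaction is also delicate: one component using a $U$-edge must not be blocked by others needing disjoint $U$-vertices. The approach I would try first is to handle a single caterpillar carefully, building the explicit embedding of $C(s;\beta)$ or the double broom into $K_{|U|}$-type hubs with a prescribed small graph on $U$. For forests, I would then allocate hubs greedily, letting the special component $F_{j_1}$ consume the extra edge while the rest use their exact $\tau(F_j)$ share of $U$ and fresh vertices of $X$.
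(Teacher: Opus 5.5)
\textbf{Gap: the uniqueness step.} Your overall architecture is the paper's. Theorem~\ref{thm:main} gives $K:=K_{\tau(F)-1,n-\tau(F)+1}\subseteq G$. Each case is then settled by embedding and counting arguments plus Corollaries~\ref{cor:H} and~\ref{cor:L}. The problem is how you get uniqueness. Your ``lexicographic comparison'' is not valid for $1<p\le 2$. An edge inside $U$ is worth about $2pn^{p-1}$, but $\Delta(G[X])\le\ell(F)-1$ only caps the total effect of $G[X]$ at $\Theta(n)$, and that is not $o(n^{p-1})$ for such $p$. For $p>2$ your argument is fine, but the theorem covers all $p>1$.

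Consequently your step (ii), which examines $G[X]$ only for the optimal $G[U]$, does not determine $G$. For example, in the last case of (\ref{eq:caterforest2}), take $K$, an empty graph on $U$, and a linear-size matching on $X$. For $p<2$ this graph would beat $E(n,\tau(F)-1)$ if it were $F$-free. What rules it out is that two independent edges in $X$ already create $F$ without using any edge inside $U$; in Figure~1 the hub vertices of each $F_j$ sit at pairwise non-adjacent spine positions. For the same reason, showing that your target $P$ is maximal $F$-free does not by itself give $G=P$.

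The paper argues by containment instead. In \emph{every} supergraph of $K$, regardless of the rest of the graph:
\begin{itemize}
\item $\ell(F)=1$ forbids edges in $X$;
\item a $P_3$ (resp.\ any two edges, or one edge) inside $U$ yields $F$;
\item two independent edges in $X$ yield $F$.
\end{itemize}
Hence $G\subseteq P$ with $P$ $F$-free, and monotonicity of $e_p$ finishes. In the remaining cases Corollary~\ref{cor:H} or~\ref{cor:L} applies; the latter bounds the $U$-degrees and $X$-degrees simultaneously, not lexicographically. You need to state and check each forbidden configuration in this $G[U]$-independent form.

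\textbf{Your rule for when a $U$-edge helps is wrong.} Taken literally, it gives the wrong extremal graphs in part (a). There $G[X]$ is empty, so $F$ embeds iff some vertex cover $S$ of $F$ with $|S|\le|U|$ has $F[S]$ embeddable in $G[U]$.
\begin{itemize}
\item For $C(3;\beta_j)$ with $\beta_j\ge 2$, every vertex cover inducing at most one edge has size at least $\beta_j+2=\tau(F_j)$. A saving needs a $P_3$ in $U$ carrying the whole spine. This is why the third case of (\ref{eq:caterforest1}) gives $M_{\tau(F)-1}\vee E_{n-\tau(F)+1}$ and not $K$.
\item For odd $s_j\ge 5$ with $\beta_j\ge2$, one $U$-edge saves nothing, but any two (incident or not) save $\beta_j-1$. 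This gives $E^+_{\tau(F)-1}$ in the fourth case.
\item Only even $s_j$ with $\beta_j\ge 2$ benefits from a single $U$-edge.
\end{itemize}
Your list of outcomes is correct, but it does not follow from the rule you state.

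\textbf{The case analysis is not done.} The case-by-case verification is the actual content of the proof, and you leave it undone. It consists of:
\begin{itemize}
\item the count of spine vertices in $U$ plus leaves of spine vertices in $X$, compared against $|U|$;
\item the $F$-freeness of $L(n,\tau(F)-1,0)$, $L(n,\tau(F)-1,1)$, every member of $\mathcal L(n,2t-1,\beta)$, and $E(n,\tau(F)-1)$;
\item the explicit embeddings showing that each forbidden configuration creates $F$.
\end{itemize}
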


\begin{proof}
Throughout\, this\, proof,\, we\, will\, consider\, the\, graph\, $K_{\tau(F)-1,n-\tau(F)+1}$,\, graphs\, of $\mathcal L(n,\tau(F)-1,\ell(F)-1)$, and the graph $E(n,\tau(F)-1)$. Each of these graphs admits a partition of its vertex set into a small set $U$ of size $\tau(F)-1$ consisting of the ``hub'' vertices, and a large set $X$ of size $n-\tau(F)+1$ consisting of the ``reservoir'' vertices. Recall that $\ex(n,F)=O(n)$. Let $p>1$, and choose $n_0=n_0(F,p)$ according to Theorem \ref{thm:main} and Corollaries \ref{cor:H}, \ref{cor:L}. Let $n\ge n_0$.\\[1ex]
\indent(a) Let us refer the four cases of (\ref{eq:caterforest1}) as (\ref{eq:caterforest1}a) to (\ref{eq:caterforest1}d). It is easy to obtain $\tau(F)=\sum_{j=1}^t\big(\big\lfloor\frac{s_j}{2}\big\rfloor \beta_j+\big\lceil\frac{s_j}{2}\big\rceil\big)$. 

Firstly, suppose (\ref{eq:caterforest1}a) holds. Note that $\tau(F)=s$. If $L(n,s-1,0)$ contains a copy of $F$, then we have a set $S$ of $s+1$ vertices of the copy of $F$ in $X$, so $S$ is an independent set in $L(n,s-1,0)$. But two vertices of $S$ must be a spine vertex and its leaf neighbour in $F$, which is impossible. Hence, $L(n,s-1,0)$ is $F$-free. By Corollary \ref{cor:L}, we have $G=L(n,s-1,0)$.

Now, consider the cases (\ref{eq:caterforest1}b), (\ref{eq:caterforest1}c) and (\ref{eq:caterforest1}d), so that $\beta_{j_1}\ge 2$ for some $j_1$. By Theorem \ref{thm:main}, $G$ contains $K:=K_{\tau(F)-1,n-\tau(F)+1}$ as a subgraph, and $K$ is $F$-free. It is easy to obtain $\ell(F)=1$, and since $G$ is $F$-free, we see that $G$ cannot contain an edge within $X$.

Suppose first that (\ref{eq:caterforest1}b) holds. If there is an edge $e$ in $U$, then we have a copy of $F$ with exactly $\sum_{j=1}^t\big\lfloor\frac{s_j}{2}\big\rfloor-1$ spine vertices in $X$, and $e$ is an edge of the spine on $s_{j_1}$ vertices. The number of leaves adjacent to the spine vertices in $X$, along with the spine vertices in $U$, is $\big(\sum_{j=1}^t\big\lfloor\frac{s_j}{2}\big\rfloor \beta_j-\beta_{j_1}\big)+\big(\sum_{j=1}^t\big\lceil\frac{s_j}{2}\big\rceil+1\big)=\tau(F)-\beta_{j_1}+1\le |U|$. Thus, these vertices can all be in $U$. The remaining leaves of $F$ can all be in $X$.  Hence, $K$ is maximal $F$-free. By Corollary \ref{cor:H}, we have $G=K_{\tau(F)-1,n-\tau(F)+1}$.

Now, suppose (\ref{eq:caterforest1}c) or (\ref{eq:caterforest1}d) holds. If we add into $U$ a maximum matching if (\ref{eq:caterforest1}c) holds, and an edge if (\ref{eq:caterforest1}d) holds, then the new graph will still be $F$-free. Indeed, suppose that there is a copy of $F$. Let $a_j$ be the number of spine vertices of $F_j$ in $X$. If (\ref{eq:caterforest1}c) holds, then $a_j\ge 1$ if $s_j=3$. The number of leaves adjacent to the spine vertices in $X$, along with the spine vertices in $U$, is \\[-1.2ex]
\[
\sum_{j=1}^t (a_j\beta_j+s_j-a_j) =\sum_{j=1}^t(a_j(\beta_j-1)+s_j)\ge\sum_{j:\,s_j\neq 3}s_j+\sum_{j:\,s_j=3}(\beta_j+2)=\tau(F)>|U|,
\]
so not all of these vertices can be in $U$. If (\ref{eq:caterforest1}d) holds, then $a_j\ge\big\lfloor\frac{s_j}{2}\big\rfloor$ if $s_j$ is odd. The number of leaves adjacent to the spine vertices in $X$, along with the spine vertices in $U$, is
\begin{align*}
\sum_{j=1}^t (a_j\beta_j+s_j-a_j) &=\sum_{j=1}^t(a_j(\beta_j-1)+s_j)\\
&\ge\sum_{j:\,s_j\textup{ is even}}s_j+\sum_{j:\,s_j\textup{ is odd}}\Big(\Big\lfloor\frac{s_j}{2}\Big\rfloor(\beta_j-1)+s_j\Big)\\
&=\sum_{j:\,s_j\textup{ is even}}s_j+\sum_{j:\,s_j\textup{ is odd}}\Big(\Big\lfloor\frac{s_j}{2}\Big\rfloor\beta_j+\Big\lceil\frac{s_j}{2}\Big\rceil\Big)=\tau(F)>|U|,
\end{align*}
so again not all of these vertices can be in $U$. On the other hand, if we add into $U$ two incident edges if (\ref{eq:caterforest1}c) holds; and two edges, incident or independent, if (\ref{eq:caterforest1}d) holds, then the new graph will contain a copy of $F$. Indeed, for (\ref{eq:caterforest1}c), the spine of $F_{j_1}$ consists of the two incident edges in $U$. For (\ref{eq:caterforest1}d), the spine of $F_{j_1}$ uses the two edges in $U$ and has $\big\lfloor\frac{s_{j_1}}{2}\big\rfloor-1$ vertices in $X$. For both cases, any other $F_j$ has $\big\lfloor\frac{s_j}{2}\big\rfloor$ spine vertices in $X$. The number of leaves adjacent to the spine vertices in $X$, along with the spine vertices in $U$, is 
\[
\Big(\Big\lfloor\frac{s_{j_1}}{2}\Big\rfloor-1\Big)\beta_{j_1}+\Big\lceil\frac{s_{j_1}}{2}\Big\rceil+1+\sum_{j\neq j_1}\Big(\Big\lfloor\frac{s_j}{2}\Big\rfloor\beta_j+\Big\lceil\frac{s_j}{2}\Big\rceil\Big)=\tau(F)-\beta_{j_1}+1\le |U|,
\]
so all of these vertices can be in $U$. The remaining leaves of $F$ are in $X$. It follows that if (\ref{eq:caterforest1}c) holds, then $G=M_{\tau(F)-1}\vee E_{n-\tau(F)+1}$; and if (\ref{eq:caterforest1}d) holds, then $G=E^+_{\tau(F)-1}\vee E_{n-\tau(F)+1}$.\\[1ex]
\indent(b) Let us refer the five cases of (\ref{eq:caterforest2}) as (\ref{eq:caterforest2}a) to (\ref{eq:caterforest2}e). It is easy to obtain the expression (\ref{eq:caterbtauF}) for $\tau(F)$. Let $y_j,z_j$ be the end-vertices of the spine of $F_j$ incident with $\beta_j,\gamma_j$ pendant edges respectively, for $1\le j\le t$.

Firstly, suppose (\ref{eq:caterforest2}a) holds. Consider $K:=K_{\tau(F)-1,n-\tau(F)+1}$, which is $F$-free. By the existence of $j_1$, it is easy to obtain $\ell(F)=1$, and since $G$ is $F$-free, no edge of $G$ can be in $X$. Now, if there is an edge $e$ of $G$ in $U$, then $G$ has a copy of $F$. Indeed, let the spine of $F_{j_1}$ contain $e$, and $y_{j_1},z_{j_1}\in U$, so that $\frac{s_{j_1}}{2}+1$ spine vertices of $F_{j_1}$ are in $U$. For every other $F_j$ where $j\neq j_1$, the spine of $F_j$ has $\big\lceil\frac{s_j}{2}\big\rceil$ vertices in $U$, with $y_j\in X$ and $z_j\in U$ if $s_j$ is even; and $y_j,z_j\in U$ if $s_j$ is odd. Then, the number of leaves adjacent to the spine vertices in $X$, along with the spine vertices in $U$, is
\[
\frac{s_{j_1}}{2}+1+\sum_{j:\,s_j\textup{ is even, }j\neq j_1}\Big(\frac{s_j}{2}+\beta_j\Big)+\sum_{j:\,s_j\textup{ is odd}}\Big\lceil\frac{s_j}{2}\Big\rceil\le \tau(F)-1=|U|,
\]
so all of these vertices can be in $U$. All remaining vertices of $F$ are in $X$. Hence, $K$ is maximal $F$-free, and by Corollary \ref{cor:H}, we have $G=K_{\tau(F)-1,n-\tau(F)+1}$.

Now, suppose that (\ref{eq:caterforest2}b), (\ref{eq:caterforest2}c) or (\ref{eq:caterforest2}d) holds. We have
\[
\ell(F)=\left\{
\begin{array}{ll}
1, & \textup{if (\ref{eq:caterforest2}b) holds, by the existence of $j_1$;}\\
\beta+1, & \textup{if (\ref{eq:caterforest2}c) holds;}\\
2, & \textup{if (\ref{eq:caterforest2}d) holds, by the existence of $j_1$.}
\end{array}
\right.
\]
In each case, we show that the graphs of $\mathcal L(n,\tau(F)-1,\ell(F)-1)$ are all $F$-free. Firstly, suppose (\ref{eq:caterforest2}b) holds. If there is a copy of $F$ in $L(n,\tau(F)-1,0)$, then we have a set $S$ of vertices of $F$ in $X$, where
\begin{align*}
|S| &\ge |V(F)|-(\tau(F)-1)\\
&=\sum_{j:\,s_j\textup{ is even}}(s_j+\gamma_j+1)+\sum_{j:\,s_j\textup{ is odd}}(s_j+\beta_j+\gamma_j)-\tau(F)+1\\
&=\sum_{j:\,s_j\textup{ is even}}\Big(\frac{s_j}{2}+\gamma_j\Big)+\sum_{j:\,s_j\textup{ is odd}}\Big(\Big\lfloor\frac{s_j}{2}\Big\rfloor+\beta_j+\gamma_j\Big)+1.
\end{align*}
For $j$ such that $s_j$ is odd, let $B_j$ consist of $y_j$ and its leaf neighbours, and define $\Gamma_j$ similarly with $z_j$ and for every $j$. Let $S_j:=V(F_j)\setminus \Gamma_j$ for $j$ such that $s_j$ is even, and $S_j:=V(F_j)\setminus(B_j\cup\Gamma_j)$ for $j$ such that $s_j$ is odd. Then $S$ must contain either $B_j$ for some $j$ with $s_j$ odd, or $\Gamma_j$ for some $j$, or $\big\lfloor\frac{s_j}{2}\big\rfloor+1$ vertices of $S_j$ for some $j$. Any of these contradicts the fact that $S$ is an independent set in $L(n,\tau(F)-1,0)$. Next, suppose (\ref{eq:caterforest2}c) holds. We have $\tau(F)=2t$. Let $H\in \mathcal L(n,2t-1,\beta)$. Suppose that there is a copy of $F$ in $H$. Since $|U|=2t-1$, we may assume that $F_1$ has at most one vertex in $U$. If $z_1\in X$, then $d_F(z_1)\le \Delta(H[X])+1\le \beta+1\le\beta_1+1<\gamma_1+1$, a contradiction. Thus, $z_1$ is the only vertex of $F_1$ in $U$, and so $y_1$ and all $\beta_1+1$ of its neighbours in $F$ must lie in $X$. But then, $d_{H[X]}(y_1)\ge\beta_1+1\ge \beta+1$, which contradicts $\Delta(H[X])\le \beta$. Finally, suppose (\ref{eq:caterforest2}d) holds. Let $0\le r\le t-1$ be the number of $F_j$ with $s_j=3$, so that $\tau(F)=2r+3(t-r)$. Suppose that there is a copy of $F$ in $L(n,\tau(F)-1,1)$. From the conditions on the $F_j$, it is not hard to see that in $U$, there must be at least two vertices of $F_j$ if $s_j=3$, and at least three vertices if $s_j=5$. Then the total number of vertices in $U$ is at least $2r+3(t-r)>\tau(F)-1=|U|$, a contradiction. Hence, by Corollary \ref{cor:L}, we have $G=L(n,\tau(F)-1,0)$ if (\ref{eq:caterforest2}b) holds; $G\in \mathcal L(n,2t-1,\beta)$ if (\ref{eq:caterforest2}c) holds; and $G=L(n,\tau(F)-1,1)$ if (\ref{eq:caterforest2}d) holds. Moreover, for (\ref{eq:caterforest2}c), since $H\in \mathcal L(n,2t-1,\beta)$ above was arbitrary, all graphs of $\mathcal L(n,2t-1,\beta)$ are extremal graphs for $\ex_p(n,F)$.

Finally,\,\, suppose\,\, (\ref{eq:caterforest2}e)\,\, holds.\,\, By\,\, Theorem\,\, \ref{thm:main},\,\, $G$\,\, contains\,\, a\,\, copy\,\, of\,\, $K:=K_{\tau(F)-1,n-\tau(F)+1}$. By the existence of $j_1$, we have $\ell(F)=2$, and since $G$ is $F$-free, we see that $G$ cannot contain two incident edges in $X$. Now, if $G$ has two independent edges $e$ and $f$ in $X$, then there is a copy of $F$. Indeed, for $F_{j_1}$, we have $\big\lceil\frac{s_{j_1}}{2}\big\rceil-1$ spine vertices in $U$, with $e$ being the pendant edge at $y_{j_1}$ if $s_{j_1}\in\{3,5\}$, $z_{j_1}\in U$ if $s_{j_1}=5$, and $y_{j_1},z_{j_1}\in U$ if $s_{j_1}\ge 7$. See Figure 1. Every other $F_j$ has $\big\lceil\frac{s_j}{2}\big\rceil$ spine vertices in $U$. All leaves of $F$ are in $X$. Then, the number of vertices of $F$ in $U$ is exactly $\sum_{j=1}^t \big\lceil\frac{s_j}{2}\big\rceil-1=\tau(F)-1=|U|$. Thus, $G$ is a subgraph of $E(n,\tau(F)-1)$. On the other hand, if $E(n,\tau(F)-1)$ contains a copy of $F$, then there would be at least $\sum_{j=1}^t\big\lceil\frac{s_j}{2}\big\rceil>|U|$ vertices of the copy of $F$ in $U$, a contradiction. It follows that $G=E(n,\tau(F)-1)$.\\[0.5ex]
\indent This completes the proof of Theorem~\ref{thm:caterpillar}.
\end{proof}
\indent\\[-0.9cm]
\begin{center}
\begin{tikzpicture}[outer sep=0pt,
    dot/.style={circle, fill, inner sep=1.2pt},
    set/.style={ellipse, draw, minimum height=0.8cm},
    scale=0.67
]

\begin{scope}[xshift=1cm]
    \draw (0,2.4) ellipse [x radius=1.2, y radius=0.6];
    \draw (0,0) ellipse [x radius=2.4, y radius=0.6];
    \node [font=\scriptsize] at (-1.5, 2.4) {$U$};
    \node [font=\scriptsize] at (-2.7, 0) {$X$};
    
    \node[dot] (u1) at (-0.4, 2.4) {};
    \node[dot] (x1) at (-1.6, 0) {};
    \node[dot] (x2) at (-0.8, 0) {};
    \node[dot] (x3) at (0, 0) {};
    \node[dot] (x4) at (0.8, 0) {};
    
    \draw (x1) -- (x2) node[font=\scriptsize, midway, below=-0.08cm] {$e$};
    \draw (x3) -- (x4) node[font=\scriptsize, midway, below=-0.08cm] {$f$};
    \draw (u1) -- (x2) node[font=\scriptsize, midway, left=-0.05cm] {$F_{j_1}$};
    \draw (u1) -- (x3);
    
    \node [font=\scriptsize] at (-0.7,-0.33) {$y_{j_1}$};
    \node [font=\scriptsize] at (-0.02,-0.33) {$z_{j_1}$};
    
    \node at (-0.22, -1.4) {$s_{j_1} = 3$};
\end{scope}

\begin{scope}[xshift=7.5cm]
    \draw (0,2.4) ellipse [x radius=1.6, y radius=0.6];
    \draw (0,0) ellipse [x radius=2.8, y radius=0.6];
    \node [font=\scriptsize] at (-1.9, 2.4) {$U$};
    \node [font=\scriptsize] at (-3.1, 0) {$X$};
    
    \node[dot] (u1) at (-0.8, 2.4) {};
    \node[dot] (u2) at (0.8, 2.4) {};
    \node[dot] (x1) at (-2, 0) {};
    \node[dot] (x2) at (-1.2, 0) {};
    \node[dot] (x3) at (-0.4, 0) {};
    \node[dot] (x4) at (0.4, 0) {};
    
    \draw (x1) -- (x2) node[font=\scriptsize, midway, below=-0.08cm] {$e$};
    \draw (x3) -- (x4) node[font=\scriptsize, midway, below=-0.08cm] {$f$};
    \draw (u1) -- (x2) node[font=\scriptsize, midway, left=-0.05cm] {$F_{j_1}$};
    \draw (u1) -- (x3);
    \draw (u2) -- (x4);
    
    \foreach \a in {-60, -70, -80}
        \draw (u2) -- +(\a:1);
        
    \node [font=\scriptsize] at (-0.9,-0.33) {$y_{j_1}$};
    \node [font=\scriptsize] at (0.45,2.6) {$z_{j_1}$};
    
    \node at (-0.22, -1.4) {$s_{j_1} = 5$};
\end{scope}

\begin{scope}[xshift=15.6cm]
    \draw (0,2.4) ellipse [x radius=2.8, y radius=0.6];
    \draw (0,0) ellipse [x radius=4, y radius=0.6];
    \node [font=\scriptsize] at (-3.1, 2.4) {$U$};
    \node [font=\scriptsize] at (-4.3, 0) {$X$};
    
    \node[dot] (u1) at (-2, 2.4) {};
    \node[dot] (u2) at (-1.2, 2.4) {};
    \node[dot] (u3) at (-0.4, 2.4) {};
    \node[dot] (u4) at (0.4, 2.4) {};
    \node at (1.25, 2.4) {$\dots$};
    \node[dot] (u5) at (2, 2.4) {};
    
    \node[dot] (x1) at (-2.4, 0) {};
    \node[dot] (x2) at (-1.6, 0) {};
    \node[dot] (x3) at (-0.8, 0) {};
    \node[dot] (x4) at (0, 0) {};
    \node[dot] (x5) at (0.8, 0) {};
    \node at (1.65, 0) {$\dots$};
    \node[dot] (x6) at (2.4, 0) {};
    
    \draw (x1) -- (x2) node[font=\scriptsize, midway, below=-0.08cm] {$e$};
    \draw (x3) -- (x4) node[font=\scriptsize, midway, below=-0.08cm] {$f$};
    \draw (u1) -- (x1) node[font=\scriptsize, midway, left=-0.05cm] {$F_{j_1}$};
    \draw (u2) -- (x2);
    \draw (u2) -- (x3);
    \draw (u3) -- (x4);
    \draw (u3) -- (x5);
    \draw (u4) -- (x5);
    \draw (u5) -- (x6);
    
    \foreach \a in {-116.6, -126.6, -136.6}
        \draw (u1) -- +(\a:1);
    \foreach \a in {-63.4}
        \draw (u4) -- +(\a:1);
    \foreach \a in {116.6}
        \draw (x6) -- +(\a:1);
    \foreach \a in {-63.4, -53.4, -43.4}
        \draw (u5) -- +(\a:1);
        
    \node [font=\scriptsize] at (-1.7,2.6) {$y_{j_1}$};
    \node [font=\scriptsize] at (1.65,2.6) {$z_{j_1}$};
        
    \node at (-0.22, -1.4) {$s_{j_1} \ge 7$};
\end{scope}

\end{tikzpicture}
\indent\\[0.2cm]
Figure 1. Construction of $F_{j_1}$ in $G$
\end{center}
\indent\\[-1cm]
\begin{rmk}\label{rmk:l=3}
\indent\\[-0.4cm]
\begin{enumerate}
\item[\textup{(a)}] \textup{In Theorem~\ref{thm:caterpillar}(b), we have not completely determined $\ex_p(n, F)$ for the case when $F=\bigcup_{j=1}^tF_j$ is such that $s_j\in\{3,5\}$ for all $1\le j\le t$, with $s_{j_1}=3$ and $2\le \beta_{j_1}=\gamma_{j_1}$ for some $j_1$. For such a forest $F$, it is conceivable that any extremal graph $G$ for $\ex_p(n,F)$ should be of the form $G=K_{\tau(F)-1}\vee L$, where $L$ is a graph on $n-\tau(F)+1$ vertices such that $G$ is $F$-free. However, determining the structure of $L$ appears to be a challenging problem. In particular, if $F=C(3;\beta,\beta)$ where $\beta\ge 2$, then we know that $G=K_1\vee L$, where $L$ must be $2S_{\beta}$-free and of maximum degree at most $\beta$.}
\item[\textup{(b)}] \textup{The case when $F$ is a broom is covered in Theorem~\ref{thm:caterpillar}(b) by setting $t=1$ and $\beta_1=1$. The determination of $\ex_p(n,F)$ for this case was an open problem of Lan et al.~\cite{LLQS2019}. The problem was subsequently solved by Gerbner \cite{G2025a}. Here, by setting $\beta_j=1$ for all $1\le j\le t$ in Theorem~\ref{thm:caterpillar}(b), we have extended Gerbner's result to the answer for $\ex_p(n,F)$ when $F$ is a broom forest.}
\end{enumerate}
\end{rmk}

\indent Let $\ell_1,\ell_2,\dots,\ell_k\ge 1$. The \emph{spider} $S(\ell_1,\ell_2,\dots,\ell_k)$ is the graph formed by attaching $k$ paths of lengths $\ell_1,\ell_2,\dots,\ell_k$, called the \emph{legs} of $S(\ell_1,\ell_2,\dots,\ell_k)$, to a \emph{centre vertex} $v$. Note that stars ($\ell_i=1$ for all $1\le i\le k$; or $k=1$ and $\ell_1\in\{1,2\}$) and paths ($k\in\{1,2\}$) are two special cases of a spider. To ensure that every spider is identified by a unique notation $S(\ell_1,\ell_2,\dots,\ell_k)$, up to permutations of $\ell_1,\ell_2,\dots,\ell_k$, we shall identify the path $P_{\ell+1}$ with $S(\ell)$ if $\ell\neq 2$, and the path $P_3$ with $S(1,1)$. Under these assumptions, we say that $S(\ell_1,\ell_2,\dots,\ell_k)$ is \emph{even-legged} if every $\ell_i$ is even; \emph{odd-legged} if every $\ell_i$ is odd; and \emph{mixed parity} otherwise. Then, every spider is classified as exactly one of these three types. We are interested in the determination of $\ex_p(n,F)$ when $F$ is a spider forest. The case when $F$ is a single star has already been discussed in Remark \ref{rmk:star}. Lan et al.~\cite{LLQS2019} determined $\ex_p(n,F)$ when $F$ is a star forest and a linear forest, where $p\ge 2$ is an integer and $n$ sufficiently large. Here, we have the following result which includes the results of Lan et al.

\begin{thm}\label{thm:Spider}
Let $t\ge 1$, and $F=\bigcup_{j=1}^t F_j$ be a spider forest which is not a star. For $1\le j\le t$, let $F_j=S(\ell_{1,j},\ell_{2,j},\dots,\ell_{k_j,j})$ with centre $v_j$, and $\ell_{1,j},\ell_{2,j},\dots,\ell_{k_j,j}\ge 1$. Let $r_j$ be the number of legs of $F_j$ with length $1$. Assume that $r_j\le 1$ for $1\le j\le s$ and $r_j\ge 2$ for $s<j\le t$, for some $0\le s\le t$. Let \\[-0.7ex]
\[
k:=\sum_{j=1}^t k_j, \quad m:=\min_{1\le j\le s}k_j\textup{\emph{ if }}s\ge 1,\quad r := \sum_{j=1}^t r_j,\quad\alpha:=\min_{1\le j\le t}k_j.
\]
Let $p>1$. Then there exists $n_0=n_0(F,p)$ such that, if $G$ is an $F$-free graph on $n\ge n_0$ vertices with $e_p(G)=\ex_p(n, F)$, then
\begin{equation}\label{eq:spiderforest1}
G
\begin{cases}
\in \mathcal L(n,t-1, \alpha-1), &\textup{\emph{if $F$ is a star forest, i.e., $r=k$;}}\\
=K_{t+k-r-1}\vee L', &\textup{\emph{if $\ell_{i,j}\in\{1, 3\}$ for all $i,j$, and}}\\
&\quad\quad\textup{\emph{$s\ge 1$, $m\ge 2$, $r\leq k-1$;}}\\
=L(n,t+k-r-1, 1), &\textup{\emph{if $\ell_{i,j}\in\{1, 3\}$ for all $i,j$, and $s=0$, $r\leq k-1$;}}\\
=L(n,\tau(F)-1,0), &\textup{\emph{if $F_{j_1}$ is either mixed parity or}}\\
&\quad\quad\textup{\emph{is an odd length path for some $j_1$;}}\\
=E(n,\tau(F)-1), &\textup{\emph{otherwise;}}
\end{cases}
\end{equation}
where $L'$ is the graph on $n-t-k+r+1$ vertices that consists of a matching of size $m-1$ and isolated vertices, and $\tau(F)$ is given by\\[-1.2ex]
\begin{equation}
\tau(F)=\sum_{j=1}^t\tau(F_j),\label{eq:spiderforest2}
\end{equation}
where for $1\le j\le t$,
\begin{equation}\label{eq:spiderforest3}
\tau(F_j)=
\left\{
\begin{array}{ll}
\sum_{i=1}^{k_j}\frac{\ell_{i,j}}{2}, & \textup{\emph{if $F_j$ is even-legged;}}\\[0.5ex]
\sum_{i=1}^{k_j}\big\lfloor\frac{\ell_{i,j}}{2}\big\rfloor+1, & \textup{\emph{if $F_j$ is odd-legged or mixed parity.}}
\end{array}
\right.
\end{equation}
Moreover, for the case when $F$ is a star forest, every graph of $\mathcal L(n,t-1,\alpha-1)$ is an extremal graph for $\ex_p(n,F)$.
\end{thm}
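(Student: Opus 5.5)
Since $F$ is a forest, $\ex(n,F)=O(n)$, so $\alpha=0$ and every $p>1$ is allowed. I will choose $n_0=n_0(F,p)$ according to Theorem~\ref{thm:main} and Corollaries~\ref{cor:H}, \ref{cor:L}, exactly as in the proof of Theorem~\ref{thm:caterpillar}. Every extremal $G$ then contains $K_{\tau(F)-1,n-\tau(F)+1}$ with hub set $U$ and reservoir $X$.

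\textbf{Step 1: compute $\tau(F_j)$ and $\ell(F)$.} Any independent vertex cover of a tree is one of its two colour classes, so $\tau(F)$ is the sum over components of the smaller class, and $I(F)$ is found componentwise. For an even-legged spider, the class avoiding the centre has $\sum_i \ell_{i,j}/2$ vertices. Otherwise the class containing the centre has $\sum_i\lfloor \ell_{i,j}/2\rfloor+1$ vertices, and it is the smaller one (with the identification conventions for $P_3$ and paths settling ties). This gives (\ref{eq:spiderforest3}). Next I read off $\ell(F)$ as the minimum degree in $F$ of a vertex lying in a minimum independent cover:
\begin{itemize}
\item[(i)] $\ell(F)=\alpha$ for star forests, since the centres form the cover.
\item[(ii)] $\ell(F)=1$ when some $F_{j_1}$ is mixed parity or an odd path, because the minimum cover then contains a degree-one vertex.
\item[(iii)] $\ell(F)=2$ in the remaining non-$\{1,3\}$ cases.
\end{itemize}
For the $\{1,3\}$ cases, the relevant smallest degrees are $1$ when $s=0$ (only length-$1$ legs besides length-$3$ legs) and $m$ when $s\ge1$. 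I will organise the rest of the proof along the five cases of (\ref{eq:spiderforest1}).

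\textbf{Step 2: cases where Corollary~\ref{cor:L} applies directly.} These are the star forest case, case (c) and case (d). In each I exhibit an $F$-free member of $\mathcal L(n,\tau(F)-1,\ell(F)-1)$ and then invoke Corollary~\ref{cor:L}. The freeness argument is a counting argument, as in Theorem~\ref{thm:caterpillar}(b). Suppose a copy of $F$ meets $X$ in a set $S$ with $|S|\ge |V(F)|-\tau(F)+1$. Since $\Delta(G[X])\le \ell(F)-1$, each component $F_j$ can place at most $\tau(F_j)$ vertices in $U$ when a star, or when an independent/matching structure is forced in $X$. Summing over components gives more than $|U|$ vertices in $U$, a contradiction. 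For star forests every graph of $\mathcal L$ has the same $e_p$, which yields the ``moreover'' statement.

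\textbf{Step 3: cases (b) and (e), where I argue as in Theorems~\ref{thm:ec} and~\ref{thm:caterpillar}.} In case (e) we have $\ell(F)=2$, so $G[X]$ has maximum degree at most $1$. I will show that two independent edges in $X$ already create $F$: embed one component with a length-$2$ leg whose last edge is one $X$-edge, and use the hub vertices for alternate path vertices. Hence $G[X]$ has at most one edge. I then check that $E(n,\tau(F)-1)$ is $F$-free by counting hub vertices, and conclude $G=E(n,\tau(F)-1)$. Case (b) is similar. Here $\tau(F)=t+k-r$, and $G[X]$ has maximum degree less than $m$ but, by the structure of legs of length $3$, can carry only $m-1$ independent edges before a component with $r_j\le1$ embeds using those edges as middle edges of length-$3$ legs. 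It follows that $G\subseteq K_{t+k-r-1}\vee L'$, and maximality of $e_p$ gives equality.

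\textbf{Main obstacle.} I expect the main difficulty to be case (b). There I need to check that $K_{t+k-r-1}\vee L'$ is $F$-free, which requires a careful count showing that each spider with $r_j\le 1$ consumes at least $k_j+1-r_j$ hub vertices unless it uses an $X$-edge per length-$3$ leg. I also need to show that no other edge configuration in $U\cup X$ (edges inside $U$ and further edges in $X$) can be added. I would first try to handle it by treating legs of length $3$ as the only consumers of $X$-edges and comparing with the $m-1$ bound.
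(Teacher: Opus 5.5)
\textbf{Gap 1: $\ell(F)$ is wrong in the $\{1,3\}$ cases.} Your overall plan matches the paper's: split into the five cases, use Theorem~\ref{thm:main} and Corollaries~\ref{cor:H}, \ref{cor:L}, and count hub vertices. However, Step~1 contains an error that breaks cases (b) and (c).

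When every leg has length $1$ or $3$ and $r\le k-1$, each component with $k_j\ge 2$ has the centre class as its unique minimum independent cover, since it is smaller than the other class by $k_j-1$. That class consists of $v_j$ (degree $k_j\ge 2$) and the middle vertex $b$ of each length-$3$ leg $v_jabc$ (degree $2$). The degree-$1$ vertices are leaves and lie in the other class. So $\ell(F)=2$ in both (b) and (c), not $1$ (for $s=0$) or $m$ (for $s\ge1$).

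This matters in both cases:
\begin{itemize}
\item In (c), your value sends Corollary~\ref{cor:L} to $L(n,\tau(F)-1,0)$. That is not the claimed graph, and it is not even extremal, since $L(n,\tau(F)-1,1)$ is $F$-free with larger $e_p$.
\item In (b), $\Delta(G[X])\le m-1$ together with ``at most $m-1$ independent edges'' does not make $G[X]$ a matching. For $m\ge 3$, a star with $m-1$ edges in $X$ satisfies both. So $G\subseteq K_{t+k-r-1}\vee L'$ does not follow. You need $\Delta(G[X])\le 1$, which is exactly what $\ell(F)=2$ gives: put a degree-$2$ middle vertex at the common endpoint of two incident $X$-edges and the rest of the minimum cover in $U$.
\end{itemize}

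Also, in the $m$-matching embedding the $X$-edges are not middle edges. The paper puts $v_1\in X$, uses the $X$-edge at $v_1$ as the first edge of one leg, and uses the other $X$-edges as last edges of length-$3$ legs, each such leg costing one hub.

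\textbf{Gap 2: the freeness counts for (b) and (c).} This is the step you flag as the main obstacle, and it is where the split $r_j\le 1$ versus $r_j\ge 2$ is used. Treating length-$3$ legs as the only consumers of $X$-edges misses this: a spider with $v_j\in X$ can also route one length-$1$ leg along the $X$-edge at $v_j$. The paper's count runs as follows.
\begin{itemize}
\item If $v_j\in U$, then $F_j$ uses at least $k_j-r_j+1$ hubs, since each length-$3$ leg needs one (its middle vertex cannot have two $X$-neighbours).
\item If $v_j\in X$ and $m_j$ legs use an $X$-edge, then $F_j$ uses at least $2k_j-r_j-m_j$ hubs. This is at least $k_j-r_j+1$ provided $m_j\le k_j-1$.
\item The bound $m_j\le k_j-1$ holds for one of two reasons. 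If $r_j\ge 2$, at most one length-$1$ leg lies in $X$; this survives a near-perfect matching, which is what makes (c) work. If $r_j\le 1$, then $X$ has only $m-1\le k_j-1$ edges, which is case (b).
\item Summing gives at least $t+k-r>|U|$ hubs, a contradiction.
\end{itemize}

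\textbf{Gap 3: case (e).} Your embedding via a length-$2$ leg covers only the subcase $\ell_{1,1}=\ell_{2,1}=2$. You must first deduce, from the failure of (a)--(d), that some $F_1$ falls into one of two forms, each with its own embedding:
\begin{itemize}
\item $F_1$ is even-legged with a leg of length $\ge 4$: put the centre in $X$ and thread both $e,f$ along that leg through $\ell_{1,1}/2-1$ hubs. (If instead $F_1$ has two legs of length $2$, your embedding applies.)
\item $F_1$ is odd-legged with a leg of length $\ge 5$: put the centre in $U$ and thread $e,f$ along that leg through $\lfloor \ell_{1,1}/2\rfloor-1$ further hubs.
\end{itemize}
The count showing $E(n,\tau(F)-1)$ is $F$-free also needs $k_1\ge 2$ when $F_1$ is odd-legged with centre in $X$.

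Finally, in Step~2 ``at most $\tau(F_j)$'' should read ``at least''.
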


\begin{proof}
Let us refer the five cases of (\ref{eq:spiderforest1}) as (\ref{eq:spiderforest1}a) to (\ref{eq:spiderforest1}e). It is not hard to show that $\tau(F)$ is given by (\ref{eq:spiderforest2}) and (\ref{eq:spiderforest3}), and \\[-1.2ex]
\begin{equation}\label{eq:spiderforest4}
\ell(F)=
\left\{
\begin{array}{ll}
\alpha, & \textup{if $F$ is a star forest;}\\
1, & \textup{if $F_{j_1}$ is either mixed parity or}\\
& \quad\quad\textup{an odd length path, for some $j_1$;}\\
2, & \textup{otherwise.}
\end{array}
\right.
\end{equation}

We will consider the graph $K_{\tau(F)-1,n-\tau(F)+1}$, graphs of $\mathcal L(n,\tau(F)-1,\ell(F)-1)$, and the graph $E(n,\tau(F)-1)$. Let $U$ and $X$ be the sets of size $\tau(F)-1$ and $n-\tau(F)+1$ for these graphs. Again, we have $\ex(n, F) = O(n)$. Let $p > 1$, and choose $n_0 = n_0(F,p)$ according to Theorem \ref{thm:main} and Corollaries \ref{cor:H}, \ref{cor:L}. Let $n\ge n_0$.

Firstly, suppose that (\ref{eq:spiderforest1}a) holds. Then $\tau(F)=t\ge 2$ since $F$ is not a star, and $\ell(F)=\alpha$. We have every $H\in \mathcal L(n,t-1,\alpha-1)$ must be $F$-free, since if there is a copy of $F$ in $H$, then each of the $t$ stars in $F$ must use at least one vertex of $U$, which contradicts $|U|=\tau(F)-1=t-1$. Thus by Corollary \ref{cor:L}, we have $G\in \mathcal L(n,t-1,\alpha-1)$. Moreover, since $H$ was arbitrary, every graph of $\mathcal L(n,t-1,\alpha-1)$ is an extremal graph for $\ex_p(n,F)$. 

Next, suppose that (\ref{eq:spiderforest1}b) or (\ref{eq:spiderforest1}c) holds. From (\ref{eq:spiderforest2}) and  (\ref{eq:spiderforest3}), we have $\tau(F)=t+k-r\ge 2$. Also, since $F$ is not a star forest, we have $\ell(F)\le 2$ from (\ref{eq:spiderforest4}). By Theorem \ref{thm:main}, $G$ contains $K_{t+k-r-1,n-t-k+r+1}$ as a subgraph. Since $G$ is $F$-free, any two edges of $G$ in $X$ must be independent. If (\ref{eq:spiderforest1}b) holds, and there is a matching with $m\ge 2$ edges $e_1,\dots,e_m$ in $X$, then there is a copy of $F$. Indeed, assume that $F_1$ satisfies $k_1=m$, so that $r_1\le 1$. Let $v_1\in X$. Each leg of $F_1$ uses one of $e_1,\dots,e_m$, and the $k_1-r_1$ legs of length $3$ each uses one vertex of $U$. For $2\le j\le t$, let $v_j\in U$. The $k_j-r_j$ legs of length $3$ each uses one vertex of $U$, excluding $v_j$. All other vertices of $F$ are in $X$. The total number of vertices of $F$ in $U$ is $(t-1)+\sum_{j=1}^t(k_j-r_j)=t+k-r-1=|U|$. Hence, $G$ is a subgraph of $K_{t+k-r-1}\vee L'$.

Now, we show that $K_{t+k-r-1}\vee L'$ and $L(n,t+k-r-1,1)$ are $F$-free for (\ref{eq:spiderforest1}b) and (\ref{eq:spiderforest1}c) respectively. Suppose that there is a copy of $F$.  The set $X$ contains a matching of size $m-1$ for (\ref{eq:spiderforest1}b), and a maximum matching for (\ref{eq:spiderforest1}c). For $1\le j\le t$, let $m_j$ be the number of legs of $F_j$ that use one of the edges in $X$. If a centre $v_j\in U$, then $F_j$ has at least $k_j-r_j+1$ vertices in $U$. Suppose that a centre $v_j\in X$. For (\ref{eq:spiderforest1}b), we have $s\ge 1$. If $1\le j\le s$, then $r_j\le 1$, and $m_j\le m-1\le k_j-1$. For both (\ref{eq:spiderforest1}b) and (\ref{eq:spiderforest1}c), where $s=0$ for (\ref{eq:spiderforest1}c), if $s<j\le t$, then $r_j\ge 2$, so again $m_j\le k_j-(r_j-1)\le k_j-1$. In all cases, $F_j$ has at least $m_j-r_j+2(k_j-m_j)=2k_j-r_j-m_j\ge k_j-r_j+1$ vertices in $U$, whether or not exactly one leg of $F_j$ of length $1$ uses an edge in $X$ if $r_j\ge 1$. Thus, $F$ has at least $\sum_{j=1}^t(k_j-r_j+1)=t+k-r>|U|$ vertices in $U$, a contradiction. Hence, $G=K_{t+k-r-1}\vee L'$ if (\ref{eq:spiderforest1}b) holds, and by Corollary \ref{cor:L}, $G=L(n,t+k-r-1,1)$ if (\ref{eq:spiderforest1}c) holds.

Now, suppose (\ref{eq:spiderforest1}d) holds. We have $\ell(F)=1$ from (\ref{eq:spiderforest4}). Suppose that $L(n,\tau(F)-1,0)$ has a copy of $F$. Then, $F_j$ has at least $\sum_{i=1}^{k_j}\big\lfloor\frac{\ell_{i,j}}{2}\big\rfloor+1$ vertices in $U$ if $v_j\in U$, and at least $\sum_{i=1}^{k_j}\big\lceil\frac{\ell_{i,j}}{2}\big\rceil$ if $v_j\in X$. Thus, $F_j$ has at least $\sum_{i=1}^{k_j}\frac{\ell_{i,j}}{2}$ vertices in $U$ if $F_j$ is even-legged, and at least $\sum_{i=1}^{k_j}\big\lfloor\frac{\ell_{i,j}}{2}\big\rfloor+1$ otherwise. From (\ref{eq:spiderforest2}) and (\ref{eq:spiderforest3}), it follows that the number of vertices of $F$ in $U$ is at least $\tau(F)>|U|$, a contradiction. Hence, $L(n,\tau(F)-1,0)$ is $F$-free, and by Corollary \ref{cor:L}, we have $G=L(n,\tau(F)-1,0)$.

Finally, suppose (\ref{eq:spiderforest1}e) holds. Since (\ref{eq:spiderforest1}d) does not hold, every $F_j$ must either be even-legged, or odd-legged with $k_j\ge 2$. Moreover, since (\ref{eq:spiderforest1}a), (\ref{eq:spiderforest1}b) and (\ref{eq:spiderforest1}c) do not hold, we may assume that $F_1$ is either even-legged with $\ell_{1,1}\ge 4$ or $\ell_{1,1}=\ell_{2,1}=2$, or is odd-legged with $\ell_{1,1}\ge 5$. Now, by Theorem~\ref{thm:main}, $G$ contains $K_{\tau(F)-1, n-\tau(F)+1}$ as a subgraph, and from (\ref{eq:spiderforest4}), we have $\ell(F)=2$. It follows that $G$ cannot have two incident edges in $X$. Suppose that $G$ has two independent edges $e$ and $f$ in $X$. We show that there is a copy of $F$. If $F_1$ is even-legged and $\ell_{1,1}\ge 4$, let the centre $v_1$ of $F_1$ be in $X$, with the leg of length $\ell_{1,1}$ using $e,f$ and $\frac{\ell_{1,1}}{2}-1$ vertices of $U$. Otherwise, let $v_1\in U$. If  $F_1$ is even-legged, let $e,f$ be in the two legs with length $\ell_{1,1}=2$ and $\ell_{2,1}=2$. If $F_1$ is odd-legged, let the leg of length $\ell_{1,1}\ge 5$ use $e,f$, and $\big\lfloor\frac{\ell_{1,1}}{2}\big\rfloor-1$ other vertices of $U$, excluding $v_1$. In both cases, any other leg of $F_1$ with length $\ell_{i,1}$ uses $\big\lfloor\frac{\ell_{i,1}}{2}\big\rfloor$ vertices in $U$, excluding $v_1$ in the latter case. The number of vertices of $F_1$ in $U$ is $\sum_{i=1}^{k_1}\frac{\ell_{i,1}}{2}-1$ if $F_1$ is even-legged, and $\sum_{i=1}^{k_1}\big\lfloor\frac{\ell_{i,1}}{2}\big\rfloor$ if $F_1$ is odd-legged, and so this number is $\tau(F_1)-1$ from (\ref{eq:spiderforest3}). For every other $F_j$ where $j\ge 2$, we may have a copy of $F_j$ with $\tau(F_j)$ vertices in $U$. Hence by (\ref{eq:spiderforest2}), we may have a copy of $F$ with $\tau(F)-1=|U|$ vertices in $U$, and all remaining vertices in $X$.

It follows that $G$ is a subgraph of $E(n,\tau(F)-1)$. We show that $E(n,\tau(F)-1)$ is $F$-free. Suppose that there is a copy of $F$ in $E(n,\tau(F)-1)$, and recall that every $F_j$ is either even-legged, or odd-legged with $k_j\ge 2$. Let $e$ be the edge in the set $X$ of $E(n,\tau(F)-1)$, and assume that only $F_1$ may use $e$. We count the number of vertices of $F$ in $U$. Using (\ref{eq:spiderforest3}), if $F_1$ is even-legged, then $F_1$ has at least $\sum_{i=1}^{k_1}\frac{\ell_{i,1}}{2}=\tau(F_1)$ vertices in $U$, whether $v_1\in U$ or $v_1\in X$. If $F_1$ is odd-legged, then $F_1$ has at least $\sum_{i=1}^{k_1}\big\lfloor\frac{\ell_{i,1}}{2}\big\rfloor+1=\tau(F_1)$ vertices in $U$ if $v_1\in U$, and at least $\sum_{i=1}^{k_1}\big\lceil\frac{\ell_{i,1}}{2}\big\rceil-1=\sum_{i=1}^{k_1}\big\lfloor\frac{\ell_{i,1}}{2}\big\rfloor+k_1-1\ge \sum_{i=1}^{k_1}\big\lfloor\frac{\ell_{i,1}}{2}\big\rfloor+1=\tau(F_1)$ if $v_1\in X$. As in case (\ref{eq:spiderforest1}d), every other $F_j$ where $j\ge 2$ must use at least $\tau(F_j)$ vertices of $U$. Hence by (\ref{eq:spiderforest2}), $F$ uses at least $\tau(F)>|U|$ vertices of $U$, a contradiction. We conclude that $G=E(n,\tau(F)-1)$ if (\ref{eq:spiderforest1}e) holds.

This completes the proof of Theorem~\ref{thm:Spider}.
\end{proof}

\section*{Acknowledgements}
Ping Hu is partially supported by National Key Research and Development Program of China (No.~2021YFA1002100) and National Natural Science Foundation of China (No.~12471337). Henry Liu is supported by Guangdong and Hong Kong Universities ``1+1+1'' Joint Research Collaboration Project (No.~2025A0505000014).

\end{document}